\documentclass[12pt]{amsart}

\usepackage[colorlinks]{hyperref}

\usepackage{fullpage}
\usepackage{stmaryrd}
\usepackage{graphicx}
\usepackage{hyperref}
\usepackage{amsmath,amssymb,amsfonts,mathrsfs,amscd}
\usepackage{newtxtext,newtxmath}
\usepackage[all,cmtip]{xy}

\newtheorem{theorem}{Theorem}[section]
\newtheorem{lem}[theorem]{Lemma}
\newtheorem{cor}[theorem]{Corollary}

\newtheorem{con}[theorem]{Conjecture}

\newtheorem{thmA}{Theorem}

\newtheorem{corA}[thmA]{Corollary}

\theoremstyle{definition}
\newtheorem{defn}[theorem]{Definition}

\theoremstyle{remark}

\numberwithin{equation}{section}

\DeclareMathOperator{\Irr}{Irr}
\DeclareMathOperator{\IBr}{IBr}

\DeclareMathOperator{\Hall}{Hall}

\DeclareMathOperator{\Ker}{Ker}

\DeclareMathOperator{\Lin}{Lin}

\def\X{{\rm X}_{\pi}}
\def\Y{{\rm X}_{\pi'}}

\def\I{{\rm I}_\pi}
\def\B{{\rm B}_\pi}

\def\LPS{L^*_\varphi}
\def\LP{L_\varphi}
\def\VI{{\rm I}_\varphi}
\def\LN{\tilde{L}_\varphi}

\def\HJ#1{\par\medskip\noindent{\bf#1.}\bgroup\it \ }
\def\EHJ{\egroup\medskip}

\newcommand{\NM}{\vartriangleleft}

\begin{document}

\title{Lifts of Brauer characters in characteristic two}

\author{Ping Jin*}
\address{School of Mathematical Sciences, Shanxi University, Taiyuan, 030006, China.}
\email{jinping@sxu.edu.cn}

\author{Lei Wang}
\address{School of Mathematical Sciences, Shanxi University, Taiyuan, 030006, China.}
\email{wanglei0115@163.com}

\subjclass[2010]{Primary 20C20; Secondary 20C15}

\thanks{*Corresponding author}

\keywords{Brauer character; lift; Navarro vertex; vertex pair; twisted vertex}

\date{}

\begin{abstract}
A conjecture raised by Cossey in 2007 asserts that if $G$ is a finite $p$-solvable group and $\varphi$ is an irreducible $p$-Brauer character of $G$ with vertex $Q$,
then the number of lifts of $\varphi$ is at most $|Q:Q'|$.
This conjecture is now known to be true in several situations for $p$ odd,
but there has been little progress for $p$ even.
The main obstacle appeared in characteristic two is that
all the vertex pairs of a lift are neither linear nor conjugate.
In this paper we show that if $\chi$ is a lift of an irreducible $2$-Brauer character in a solvable group, then $\chi$ has a linear Navarro vertex if and only if all the vertex pairs of $\chi$ are linear,
and in that case all of the twisted vertices of $\chi$ are conjugate.
Our result can also be used to study other lifting problems of Brauer characters in characteristic two. As an application, we prove a weaker form of Cossey's conjecture for $p=2$ ``one vertex at a time".
\end{abstract}

\maketitle

\section{Introduction}
Fix a prime number $p$, and let $G$ be a finite $p$-solvable group.
The Fong-Swan theorem states that for each irreducible $p$-Brauer character $\varphi\in\IBr_p(G)$, there exists some irreducible complex character $\chi\in\Irr(G)$ such that $\chi^0=\varphi$, where $\chi^0$ denotes the restriction of $\chi$ to
the $p$-regular elements of $G$.
Such a character $\chi$ is called a {\bf lift} of $\varphi$.
In \cite{C2007}, Cossey proposed the following conjecture,
which also appears as Problem 3.6 of \cite{N2023}.

\begin{con}[Cossey]
Let $G$ be a $p$-solvable group, and let $\varphi\in\IBr_p(G)$.
Then $$|\LP|\leq|Q:Q'|,$$
where $\LP$ is the set of all lifts of $\varphi$ and $Q$ is a vertex for $\varphi$
(in the sense of Green).
\end{con}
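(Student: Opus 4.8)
The plan is to reduce Cossey's conjecture to the ``one vertex at a time'' statement announced in the abstract and then to settle that weaker form using the dichotomy of the main theorem. Fix $\varphi\in\IBr_2(G)$ with Green vertex $Q$. By Fong--Swan, $\LP\ne\varnothing$, and by Navarro's theory of vertices for characters of $p$-solvable groups every $\chi\in\LP$ has a Navarro vertex of the shape $(Q_\chi,\delta_\chi)$ with $Q_\chi$ conjugate to $Q$. The first step is the standard Clifford-theoretic reduction, by induction on $|G|$: choosing a maximal normal subgroup $N\NM G$ and an irreducible constituent of $\varphi_N$, one transports the problem through the usual correspondences for Brauer characters and their lifts, reaching either the case where $\varphi$ is induced from a proper subgroup -- where the inductive hypothesis applies, vertices being compatible with induction -- or a ``nuclear'' configuration in which the nucleus of $\chi$ furnishes a subgroup $W\le G$ with $Q\in\Syl_2(W)$ together with a character of $W$ inducing $\chi$. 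Counting the lifts of $\varphi$ then becomes a local problem attached to the pair $(W,Q)$.

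Second, I would split $\LP=\mathcal L_{\mathrm{lin}}\sqcup\mathcal L_{\mathrm{nl}}$, where $\mathcal L_{\mathrm{lin}}$ collects the lifts with a linear Navarro vertex. By the main theorem, $\chi\in\mathcal L_{\mathrm{lin}}$ exactly when all vertex pairs of $\chi$ are linear, and in that case the twisted vertices of $\chi$ form a single $G$-conjugacy class, with representative $(Q,\mu_\chi)$ say. For $\mathcal L_{\mathrm{lin}}$ the idea is to count ``one vertex at a time'': for a fixed twisted vertex $(Q,\mu)$ set $\mathcal L(Q,\mu)=\{\chi\in\mathcal L_{\mathrm{lin}}:(Q,\mu_\chi)\text{ is }G\text{-conjugate to }(Q,\mu)\}$. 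The conjugacy clause of the main theorem is precisely what makes this a well-defined partition of $\mathcal L_{\mathrm{lin}}$, so that no lift is attributed to two incompatible vertices. Running the nucleus reduction of the first step relative to $(Q,\mu)$, the linear characters $\delta\in\Lin(Q)$ that can occur in a vertex pair lying over $\mu$ form a controlled subset of $\Lin(Q)$, and in the base case of the induction the assignment $\chi\mapsto\delta_\chi$ is injective on $\mathcal L(Q,\mu)$; counting the admissible $\delta$ then yields $|\mathcal L(Q,\mu)|\le|Q:Q'|$. This is the weak form proved here, and it already subsumes the known odd-$p$ cases (e.g.\ $Q$ abelian or normal), where $\mathcal L_{\mathrm{nl}}=\varnothing$ and all vertex pairs across all lifts are $G$-conjugate, so that $\LP$ embeds into $\Lin(Q)$ outright.

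The main obstacle is the passage from this per-vertex bound to the full inequality $|\LP|\le|Q:Q'|$, and inside that, the treatment of $\mathcal L_{\mathrm{nl}}$. A priori the distinct twisted-vertex classes occurring among the lifts of $\varphi$ are genuinely different, so summing the per-class bounds only produces a multiple of $|Q:Q'|$; one would need an argument showing the classes ``fit together'' inside $\Lin(Q)$, presumably by exhibiting a single $Q$-invariant that refines the twisted vertex yet still ranges over a set of size at most $|Q:Q'|$. For $\mathcal L_{\mathrm{nl}}$ the situation is harder still: the vertex pairs are non-linear and need not be $G$-conjugate, so there is no evident target of size $|Q:Q'|$ into which to inject, and one would have to replace $\delta_\chi$ by a substitute invariant -- perhaps a projective character of $Q$ attached to the $2$-cocycle recording the failure of some extension to split -- and accompany it with a Clifford-theoretic count of the lifts carrying it. I expect this non-linear case to be the crux: in characteristic two it is exactly the cohomological obstructions that vanish for odd $p$ which manufacture the non-linear vertices, and controlling them appears to require genuinely new input beyond the dichotomy established in this paper.
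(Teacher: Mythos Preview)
The statement you are attempting is Cossey's \emph{conjecture}; the paper records it as an open problem and does not prove it. What the paper establishes is the weaker Theorem~B: writing $\LN\subseteq\LP$ for the lifts of $\varphi$ that have a \emph{linear} Navarro vertex, one has $|\LN(Q,\delta)|\le|N_G(Q):N_G(Q,\delta)|$ for every $\delta\in\Lin(Q)$, and therefore $|\LN|\le|Q:Q'|$. There is thus no proof in the paper to compare against; your proposal is, appropriately, a strategy that ends at an acknowledged gap rather than a proof.

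Two remarks on the strategy. First, your treatment of $\mathcal L_{\mathrm{lin}}=\LN$ understates what is actually available. You bound each twisted-vertex class $\mathcal L(Q,\mu)$ only by $|Q:Q'|$ and then worry that summing over the classes yields merely a multiple of $|Q:Q'|$. The paper's per-class bound is sharper: the class attached to $\delta\in\Lin(Q)$ has size at most the $N_G(Q)$-orbit length $|N_G(Q):N_G(Q,\delta)|$, and since these orbit lengths partition $\Lin(Q)$ and sum to $|\Lin(Q)|=|Q:Q'|$, one obtains $|\mathcal L_{\mathrm{lin}}|\le|Q:Q'|$ outright, with no further ``fitting together'' needed. (The proof of that sharper bound is not the injectivity $\chi\mapsto\delta_\chi$ you sketch; it is an induction through Clifford correspondents for Isaacs' $\pi$-induction, combined with a counting lemma of Lewis bounding the number of $\I$-characters of a given subgroup with vertex $Q$ that induce $\varphi$.) Second, your diagnosis of $\mathcal L_{\mathrm{nl}}$ as the crux is correct and matches the paper's own stance: no bound on $|\mathcal L_{\mathrm{nl}}|$ is offered, and the paper instead gives group-theoretic hypotheses (Corollary~C) under which $\mathcal L_{\mathrm{nl}}=\varnothing$, so that $\LN=\LP$. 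Your outline does not close that gap, so it does not prove the conjecture; neither does the paper.
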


This global/local conjecture seems difficult to prove,
although some progress has been made for $p$ odd.
In 2007, Cossey \cite{C2007} verified his conjecture for (solvable) groups of odd order,
and in 2011, Cossey, Lewis and Navarro \cite{CLN2011} proved the conjecture under the conditions that either $Q$ is abelian or $Q\NM G$ and $p\neq2$.
Also, Cossey and Lewis \cite{CL2010a} computed the exact number of lifts of $\varphi$
in the case where $\varphi$ lies in a block with a cyclic defect group.
For further background material of this conjecture, the reader is referred to the survey paper of Cossey \cite{C2010}.

In \cite{C2008}, Cossey assigned to each $\chi\in\Irr(G)$ a pair $(Q,\delta)$
consisting of a $p$-subgroup $Q$ of $G$ and a character $\delta\in\Irr(Q)$,
which is called a {\bf vertex pair} for $\chi$ (for precise definition, see Section 2 below or Definition 4.5 of \cite{C2010}).
Furthermore, the pair $(Q,\delta)$ is said to be {\bf linear} if $\delta\in\Lin(Q)$ is a linear character.
Of particular note is the fact that if $\chi$ is a lift of $\varphi\in\IBr_p(G)$ and if $(Q,\delta)$ is a linear vertex pair for $\chi$, then $Q$ is necessarily a vertex for $\varphi$ (see Theorem 4.6(d) of \cite{C2010}).
The following result, which is a main theorem of \cite{CL2012},
plays a key role in the study of Cossey's conjecture as well as many other lifting problems about Brauer characters;
see, for example, \cite{C2008, C2009, C2010, CL2011, CLN2011, C2012, CL2012, L2010, WJ2022, WJ2023}.

\begin{lem}[Cossey-Lewis]\label{odd}
Let $G$ be a $p$-solvable group with $p>2$, and suppose that $\chi\in\Irr(G)$ is a lift of some $\varphi\in\IBr_p(G)$. Then all of the vertex pairs for $\chi$ are linear and conjugate in $G$.
\end{lem}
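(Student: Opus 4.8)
The plan is to induct on $|G|$, following the recursive construction of a vertex pair, and to prove the two assertions — that every vertex pair is linear, and that they are mutually $G$-conjugate — in that order: once linearity is known, the conjugacy of the subgroups $Q$ is automatic, since a linear vertex pair of a lift has $Q$ a vertex for $\varphi$ (as recalled above), so only the matching of the characters $\delta$ remains. Recall the construction: given $\chi\in\Irr(G)$, as long as some $N\NM G$ carries an irreducible constituent $\theta$ of $\chi_N$ that is not $G$-invariant, one passes to a Clifford correspondent $\psi\in\Irr(T\mid\theta)$ over $\theta$, with $T=I_G(\theta)$ the stabilizer of $\theta$, and the vertex pairs of $\chi$ are by definition the $G$-conjugates of those of $\psi$; when no such reduction remains — i.e.\ $\chi$ is quasi-primitive — the vertex pair $(Q,\delta)$ is read off from the $\pi$-structure of $\chi$, with $\pi=p'$. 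The induction therefore splits into these two cases.

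\emph{Quasi-primitive case (the crux).} This is where the hypothesis $p>2$ is indispensable. Since $p$ is odd, the Sylow $p$-subgroups of $G$ — which are the Hall $\pi'$-subgroups — have odd order, equivalently $2\in\pi$; under this hypothesis the theory of $\pi$-special characters (Gajendragadkar, Isaacs) gives that a quasi-primitive character of the $p$-solvable group $G$ is $\pi$-factorable, $\chi=\alpha\beta$ with $\alpha$ a $p$-special and $\beta$ a $p'$-special character, and this factorization is unique. Two standard facts about special characters then finish the case. First, $\beta^0\in\IBr_p(G)$ is irreducible of $p'$-degree. Second, the restriction of the $p$-special character $\alpha$ to any $p'$-subgroup — hence to the set of $p$-regular elements — is a multiple of a linear character, so $\alpha^0=\alpha(1)\,\nu$ for a linear Brauer character $\nu$. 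Therefore $\varphi=\chi^0=\alpha(1)\,\nu\beta^0$, and since $\nu\beta^0$ has positive degree, irreducibility of $\varphi$ forces $\alpha(1)=1$. Thus $\alpha\in\Lin(G)$, the lift $\chi=\alpha\beta$ has $p'$-degree, a vertex of $\varphi$ is a Sylow $p$-subgroup $P$, and reading off the vertex pair in the quasi-primitive case we get $Q=P$ and $\delta=\alpha|_P$, a linear character. The point is that this endpoint is canonical: unique $\pi$-factorization, unique Sylow class. Hence any two vertex pairs arising this way are $G$-conjugate.

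\emph{Imprimitive case.} Let $(T,\psi)$ be a Clifford correspondent of $\chi$ over a non-$G$-invariant irreducible constituent $\theta$ of $\chi_N$. The first task is to see that $\psi$ is again a lift, i.e.\ $\psi^0\in\IBr_p(T)$. Since ordinary and modular induction both commute with restriction to $p$-regular elements, $\varphi=\chi^0=(\psi^G)^0=(\psi^0)^G$, and matching the Clifford correspondence for $\Irr$ with the one for $\IBr_p$ — standard $p$-solvable Clifford theory of Fong--Reynolds type — yields that $\psi^0$ is irreducible. As $|T|<|G|$, the inductive hypothesis gives that every vertex pair of $\psi$ is linear and $T$-conjugate, so every vertex pair of $\chi$ obtained through this reduction is linear and $G$-conjugate. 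That vertex pairs coming from two different first reductions are still $G$-conjugate then follows by combining the conjugacy half of the inductive hypothesis with the canonical description of the quasi-primitive endpoint, pushing both reductions down to a common quasi-primitive stage. This closes the induction.

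\emph{Main obstacle.} The conceptual weight is entirely in the quasi-primitive case, and specifically in the $\pi$-factorability of quasi-primitive characters of $p$-solvable groups: its proof needs the Sylow $p$-subgroups (the Hall $\pi'$-subgroups) to have odd order, via Glauberman-type correspondences, and it fails outright for $p=2$. That failure is precisely why, in characteristic two, a lift can have vertex pairs that are neither linear nor conjugate, which is what forces the weaker substitutes — linear Navarro vertex, twisted vertex — studied in this paper. The only other genuine hurdle, technical rather than conceptual, is the compatibility of the ordinary and modular Clifford correspondences needed to see that the Clifford correspondent of a lift is again a lift.
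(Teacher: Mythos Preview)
The paper does not supply its own proof of this lemma; it is quoted as a result of Cossey--Lewis \cite{CL2012} and used as input. So there is no in-paper argument to compare against. I will comment on the substance of your attempt instead.

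There are two genuine gaps. First, the notion of vertex pair you work with does not match the one in Section~2.3. You describe vertex pairs as arising recursively through a chain of Clifford reductions terminating at a quasi-primitive character, but the definition here is not recursive: $(Q,\delta)$ is a vertex pair for $\chi$ whenever there exists \emph{any} subgroup $U\le G$ and \emph{any} $\pi$-factored $\psi\in\Irr(U)$ with $\psi^G=\chi$, $Q\in\Hall_{\pi'}(U)$, and $\delta=(\psi_{\pi'})_Q$. Such a pair $(U,\psi)$ need not arise from a Clifford correspondence, and $\psi$ need not be quasi-primitive, so your induction does not obviously cover all vertex pairs. (Compare Lemma~\ref{lin-ver}, where in the $2\notin\pi$ situation real work is needed to pass from an arbitrary $(U,\psi)$ to one where $\psi_\pi$ and $\psi_{\pi'}$ are primitive.) Your closing sentence about ``pushing both reductions down to a common quasi-primitive stage'' gestures at this issue but does not resolve it.

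Second, in the quasi-primitive base case your decisive claim --- that for a $p$-special character $\alpha$ one has $\alpha^0=\alpha(1)\nu$ with $\nu$ linear, i.e.\ that every $p'$-element lies in $Z(\alpha)$ --- is not a standard fact and you give no argument for it. What one does get cheaply is that $(\chi_\pi)^0\in\I(G)$ (your $\beta^0$), and then irreducibility of $\chi^0$ forces $(\chi_{\pi'})^0$ itself to lie in $\I(G)$; but deducing from this that $\chi_{\pi'}$ is \emph{linear} needs a further step (for instance, showing that a $\pi'$-special character which is also a $\pi$-lift must have degree~$1$). That step is exactly where the hypothesis $2\in\pi$ is spent, and it is missing from your write-up. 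Your diagnosis in the final paragraph --- that the obstruction for $p=2$ is the failure of $\pi$-factorability of quasi-primitive characters --- is correct, but it does not by itself complete the $p>2$ case.
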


Following the notation of \cite{CL2012},
we write $\LP(Q,\delta)$ for the set of those lifts of $\varphi\in\IBr_p(G)$ with vertex pair $(Q,\delta)$ and let $N_G(Q,\delta)$ denote the stabilizer of $\delta$ in $N_G(Q)$,
where $Q$ is a $p$-subgroup of a $p$-solvable group $G$ and $\delta\in\Irr(Q)$.
In the situation of Lemma \ref{odd}, let $Q$ be a vertex for $\varphi$,
so that each lift of $\varphi$ has vertex pair $(Q,\delta)$ for some $\delta\in\Lin(Q)$,
and let $\{\delta_1,\ldots,\delta_n\}$ be a set of representatives of the $N_G(Q)$-orbits in $\Lin(Q)$ (under the natural action of $N_G(Q)$ on $\Lin(Q)$ induced by the conjugation action of $N_G(Q)$ on $Q$). Then $\LP=\bigcup_{i=1}^n\LP(Q,\delta_i)$ is clearly a disjoint union and $$\sum_{i=1}^n|N_G(Q):N_G(Q,\delta_i)|=|\Lin(Q)|=|Q:Q'|.$$
This suggests that perhaps Cossey's conjecture can be proved ``one vertex pair at a time",
and indeed, Cossey and Lewis established the following strong form of the conjecture
under certain conditions (see the proof of Theorem 1.2 of \cite{C2007} for $|G|$ odd,
and Theorem 3 of \cite{CL2010} for $Q$ abelian).

\begin{theorem}[Cossey-Lewis]\label{C-L}
Let $\varphi\in\IBr_p(G)$, where $G$ is $p$-solvable with $p>2$.
If either $|G|$ is odd or $Q$ is abelian, then $| \LP(Q,\delta)|\le |N_G(Q):N_G(Q,\delta)|$
for all $\delta\in\Lin(Q)$.
\end{theorem}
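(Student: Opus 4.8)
The plan is to prove $|\LP(Q,\delta)|\le |N_G(Q):N_G(Q,\delta)|$ by induction on $|G|$, running the two hypotheses ($|G|$ odd, $Q$ abelian) in parallel as far as possible, and using as structural input Cossey's vertex-pair machinery (Theorem~4.6 of \cite{C2010}) together with Lemma~\ref{odd}. The latter is what makes the statement well posed in the first place: it guarantees that every lift of $\varphi$ has a \emph{linear} vertex pair and that all of its vertex pairs are $G$-conjugate, so that for a fixed vertex $Q$ of $\varphi$ the sets $\LP(Q,\delta_i)$, with $\delta_i$ ranging over $N_G(Q)$-orbit representatives in $\Lin(Q)$, partition $\LP$. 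Fix such a $Q$ and a $\delta\in\Lin(Q)$, and assume $\LP(Q,\delta)\ne\emptyset$.

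The first step is a Clifford reduction. Pick a minimal normal subgroup $N$; as $G$ is $p$-solvable, $N$ is a $p'$-group or a $p$-group, and in the latter case $N\le O_p(G)\le Q$. Given $\chi\in\LP(Q,\delta)$, choose an irreducible constituent $\theta$ of $\chi_N$ (linear, when $N$ is a $p$-group) lying suitably under $\varphi$, and set $T=I_G(\theta)$. If $T<G$, the Clifford correspondence sends $\chi$ to a lift $\psi\in\Irr(T)$ of the Clifford correspondent $\varphi_0\in\IBr_p(T)$ of $\varphi$, and by the naturality of vertex pairs one may take $Q\le T$ with $(Q,\delta)$ again a vertex pair for $\psi$. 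Bearing in mind that $\theta$ is determined only up to $G$-conjugacy, this yields an injection of $\LP(Q,\delta)$ into a disjoint union of sets $L_{\varphi_0}(Q,\delta')$ with $\delta'$ running over suitable $N_G(Q)$-translates of $\delta$; one verifies that the associated indices $|N_T(Q):N_T(Q,\delta')|$ sum to at most $|N_G(Q):N_G(Q,\delta)|$, and the inductive hypothesis in $T$ finishes this case. Otherwise every such $\theta$ is $G$-invariant and $\chi$ is quasi-primitive.

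In the quasi-primitive case the two hypotheses come into play. If $|G|$ is odd, one combines the reduction above with Isaacs' theory of $B_{p'}$-characters and $\pi$-special characters: quasi-primitivity forces a very rigid structure (essentially that $Q$ is a normal Sylow $p$-subgroup and $G=O_p(G)L$), and one then identifies $\LP(Q,\delta)$ with the set of $N_G(Q,\delta)$-orbits inside one $N_G(Q)$-orbit of linear characters of $Q$, which in fact gives equality $|\LP(Q,\delta)|=|N_G(Q):N_G(Q,\delta)|$ (and, summed over orbit representatives, $|\LP|=|Q:Q'|$). If instead $Q$ is abelian, then $\varphi$ has abelian vertex and the analysis reduces to the case $Q\NM G$, so that $Q=O_p(G)$ and $\varphi\in\IBr_p(G/Q)$; there the lifts of $\varphi$ lying over a fixed $\lambda\in\Lin(Q)$ are enumerated directly using Clifford theory relative to $Q$ and Gallagher's theorem, and the factor $|N_G(Q):N_G(Q,\delta)|=|G:I_G(\delta)|$ is the size of the $G$-orbit of $\delta$.

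I expect the main difficulty to be exactly this base case together with the bookkeeping linking it to the reduction step. Since subgroup indices are not monotone, one cannot match the $N_G(Q)$-orbit data upstairs and downstairs by a crude estimate; instead the constituent $\theta$ must be chosen canonically and one must exploit that $N_G(Q)$ permutes all the relevant characters compatibly. A further subtlety is that when the $G$-invariant constituent $\theta$ is non-linear the standard character-triple isomorphism need not preserve $|G|$, so the induction has to be set up with care (or such $\theta$ ruled out using that the vertex pair is linear). Finally, in the quasi-primitive case one needs an \emph{explicit} parametrization showing that two lifts with the same linear vertex pair are separated by no more than a $\Lin(Q)$-worth of data; it is here that the hypothesis ``$|G|$ odd or $Q$ abelian'' is genuinely used, and here that the argument breaks down for a general $p$-solvable group --- in particular for $p=2$, where the vertex pairs of a lift need be neither linear nor conjugate.
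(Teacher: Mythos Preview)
The paper does not give its own proof of this theorem: it is stated in the introduction as a result of Cossey and Lewis, with the citation ``see the proof of Theorem~1.2 of \cite{C2007} for $|G|$ odd, and Theorem~3 of \cite{CL2010} for $Q$ abelian''. There is therefore nothing in the paper to compare your proposal against directly.

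That said, the paper \emph{does} prove the analogue for $2\notin\pi$ (Theorem~\ref{B'}), and explicitly says that its argument is modelled on the Cossey--Lewis argument in \cite{CL2010}. Comparing your plan to that template, two differences stand out. First, the paper does not descend through a minimal normal subgroup; it takes $N$ to be the \emph{maximal} normal subgroup over which all constituents of $\chi_N$ are $\pi$-factored (Lemma~\ref{N-nuc}), and shows (Lemma~\ref{CL3.3}) that this $N$ is the same for every $\chi\in\LPS(Q,\delta)$. This uniform choice is what makes the Clifford bookkeeping tractable: one fixes a single $\theta=\alpha\beta$ under all the lifts at once (Step~2), rather than picking a $\theta$ per $\chi$ and then arguing about conjugacy. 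Second, the paper does not reach a ``quasi-primitive base case'' at all; once $\theta$ is $G$-invariant the maximality of $N$ forces $N=G$, and the induction bottoms out trivially. The residual counting problem is handled instead by the bound $|\VI(V|Q)|\le|N_G(Q):N_V(Q)|$ (Corollary~\ref{L2010-2.2}), which plays the role your sketch assigns to the quasi-primitive analysis.

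Your plan is not wrong in spirit, but the quasi-primitive endgame you describe is the weakest part. The assertion that, for $|G|$ odd, quasi-primitivity of $\chi$ ``forces $Q$ to be a normal Sylow $p$-subgroup and $G=O_p(G)L$'' is not justified and is stronger than what actually holds; what one gets is that $\chi$ is $\pi$-factored, not that $Q\NM G$. Likewise, for $Q$ abelian the reduction ``to the case $Q\NM G$'' is asserted rather than argued. If you want to carry your outline through, I would replace the minimal-normal-subgroup step and the quasi-primitive base case by the maximal-$N$ reduction and the $\VI(V|Q)$ bound as in Section~5 of the present paper; that is essentially the Cossey--Lewis argument the theorem is credited to.
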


The purpose of the present paper is to investigate Cossey's conjecture in the case where $p=2$, for which there has been little progress.
Unfortunately, the above argument does not work in this case
because Lemma \ref{odd} can fail,
and the examples in \cite{C2006} and \cite{L2006} show that for a lift $\chi$ of some $\varphi\in\IBr_2(G)$ in a solvable group $G$,
two possibilities do happen: either $\chi$ has no linear vertex pair or all of the linear vertex pairs for $\chi$ are not conjugate.
This is a major obstacle to our research on lifts of $2$-Brauer characters.
In order to extend Lemma \ref{odd} to the case $p=2$,
we introduced the notion of {\bf twisted vertices},
and established the uniqueness of \emph{linear} twisted vertices under some conditions;
see \cite{WJ2022} for more details.
We can now use {\bf Navarro vertices} to deal with \emph{all} of the twisted vertices
for a lift of a given $2$-Brauer character in solvable groups,
and obtain an analogue of Lemma \ref{odd} for $p$ even.
For the definition of Navarro vertices, see \cite{N2002} or the next section.

The following is the first main result in this paper,
which strengthens Theorem A and Theorem B of \cite{WJ2022}.
Also, it is the starting point for our study of lifting Brauer characters for $p$ even,
just like Lemma \ref{odd} for $p$ odd.

\begin{thmA}
Let $G$ be a solvable group, and let $\chi\in\Irr(G)$ be a lift of some $\varphi\in\IBr_2(G)$.
Then the following hold.

{\rm (a)} $\chi$ has a linear Navarro vertex if and only if every vertex pair for $\chi$ is linear.
In that case, all of the twisted vertices for $\chi$ are linear and conjugate in $G$.

{\rm (b)} $\chi$ has a linear Navarro vertex if and only if every primitive character inducing $\chi$ has odd degree.

{\rm (c)} If $\chi$ is an $\mathcal{N}$-lift for some $2$-chain $\mathcal{N}$ of $G$,
then $\chi$ has a linear Navarro vertex.
\end{thmA}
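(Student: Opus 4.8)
The plan is to treat all three statements through one uniform analysis, namely a careful comparison of the recursive ``nucleus'' constructions underlying the three kinds of vertices. Recall that a Navarro vertex of $\chi$ is read off from a Navarro nucleus $(W,\gamma)$ in the sense of \cite{N2002}, where $\gamma\in\Irr(W)$ is $2$-factored, say $\gamma=\alpha\beta$ with $\alpha$ a $2$-special and $\beta$ a $2'$-special character, and $\gamma$ induces $\chi$; the vertex is a Sylow $2$-subgroup $D$ of $W$ together with the restriction $\alpha_D$, which is irreducible by the standard property of $\pi$-special characters, and it is \emph{linear} precisely when $\alpha(1)=1$. A Cossey vertex pair $(Q,\delta)$ of $\chi$ arises from a finer reduction, in which the $2'$-special part is pushed further down, and a twisted vertex in the sense of \cite{WJ2022} sits between the two in fineness. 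So the first task is to pin down, for a lift $\chi$ of $\varphi\in\IBr_2(G)$, the precise relation between these nuclei: I would show that every vertex-pair nucleus of $\chi$ is, up to $G$-conjugacy, obtained from the (essentially unique) Navarro nucleus by further steps of the same two types as the original construction --- Clifford correspondences inside a $2$-group layer and re-factorizations in the Isaacs $\pi$-theory sense --- and conversely that refining a Navarro nucleus in this way yields a vertex pair. The essential inputs are Isaacs' theory of $\pi$-special characters (with $\pi=\{2\}'$), Navarro's uniqueness theorem, and the fact recorded in Theorem~4.6 of \cite{C2010} that a linear vertex pair of $\chi$ detects a Green vertex of $\varphi$.

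For part (a), the trivial direction is immediate from this comparison: if every vertex pair of $\chi$ is linear, then in particular the vertex pair obtained by refining a Navarro nucleus is linear, which forces $\alpha(1)=1$, so the Navarro vertex is linear. For the converse I would argue by contraposition. Assuming some vertex pair $(Q,\delta)$ of $\chi$ is non-linear, trace $(Q,\delta)$ back through the refinement to the Navarro nucleus $(W,\gamma=\alpha\beta)$ and show that non-linearity of $\delta$ forces non-linearity of $\alpha$: each refinement step replaces the current $2$-special part by a Clifford correspondent of it, or by a $2$-special factor of a restriction of it, and --- using that $\chi$ is a genuine lift --- such an operation performed inside the $2$-layers of the reduction cannot manufacture a non-linear character out of a linear one. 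Once linearity of \emph{every} vertex pair of $\chi$ is established, the same comparison applied to the twisted-vertex reduction of \cite{WJ2022} shows that all twisted vertices of $\chi$ are likewise linear, and their conjugacy then follows by feeding this into Theorems~A and B of \cite{WJ2022}, whose linearity hypothesis is now met. I expect this converse to be the main obstacle of the whole argument: in characteristic two a refinement step really can turn a linear $2$-special character into a non-linear one --- this is precisely the mechanism behind the known examples of lifts with non-conjugate or non-linear vertex pairs --- so the argument cannot be purely formal and must genuinely exploit the lift hypothesis, presumably through the structure of Isaacs' canonical $\pi$-lifts together with the Green-vertex detection cited above.

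For part (b), I would convert the statement into a degree count along the nucleus. Among the reduction steps producing a Navarro nucleus $(W,\gamma)$ of $\chi$, only the Clifford correspondences alter character degrees, and they do so by the index of an inertia subgroup, while the $\pi$-factorization step is degree-preserving; hence $\gamma(1)$ equals the degree of a quasi-primitive character of $W$ inducing $\chi$, and the Navarro vertex is linear exactly when $\gamma(1)=\alpha(1)\beta(1)$ is odd, i.e.\ when $\alpha(1)=1$. Now if $\psi\in\Irr(H)$ is any primitive character with $\psi^{G}=\chi$, then $\psi$ is quasi-primitive, so after conjugation $\psi$ lies below the data of a Navarro nucleus and $2\nmid\psi(1)$ is equivalent to $2\nmid\gamma(1)$; this yields the equivalence of ``the Navarro vertex of $\chi$ is linear'' with ``every primitive character inducing $\chi$ has odd degree'', once one checks --- again using that $\chi$ is a lift, so that its quasi-primitive Clifford reduction is already $2$-factored --- that a primitive inducer of even degree exists whenever $\alpha$ is non-linear. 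The subtlety here is exactly the possibility of primitive inducers that are not themselves $2$-factored, which one circumvents by comparing degrees rather than the characters.

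For part (c), recall that an $\mathcal{N}$-lift for a $2$-chain $\mathcal{N}\colon 1=N_{0}\NM N_{1}\NM\cdots\NM N_{k}=G$ is built up from the trivial character by passing successively through the sections $N_{i+1}/N_{i}$, with the canonical $\pi$-special ($\pi=\{2\}'$) choices made at each stage. I would show that such a $\chi$ has all of its primitive inducing characters of odd degree --- intuitively, the $2'$-sections of $\mathcal{N}$ contribute only to the odd part of the degree while the $2$-sections are traversed by induction from below, so that no non-linear $2$-special factor is ever created --- and then invoke part (b); alternatively one may follow the chain $\mathcal{N}$ directly to exhibit a Navarro nucleus of $\chi$ whose $2$-special part is the trivial character. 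Given the dictionary set up in part (a) and the degree bookkeeping of part (b), I expect this last part to be essentially routine.
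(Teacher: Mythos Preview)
Your plan rests on a structural claim that you never justify and that the paper does \emph{not} use: that every vertex pair $(Q,\delta)$ of $\chi$ arises, up to conjugacy, by ``refining'' a Navarro nucleus $(W,\gamma)$ through further Clifford/factorization steps. A vertex pair comes from an \emph{arbitrary} subgroup $U\le G$ carrying a $\pi$-factored character $\psi$ with $\psi^G=\chi$; there is no a~priori containment between $U$ and $W$, and your sketch for the converse of (a) --- tracing a non-linear $(Q,\delta)$ back along a chain of refinements to force $\alpha(1)>1$ --- has no footing without that containment. Similarly, in (b) your assertion that for any primitive $\psi$ with $\psi^G=\chi$ one has ``$2\nmid\psi(1)$ equivalent to $2\nmid\gamma(1)$'' is precisely the content to be proved; it does not follow from $\psi$ ``lying below the data of a Navarro nucleus'', a phrase that begs the same structural question.

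The paper proceeds quite differently and hinges on a concrete arithmetic fact you do not mention. The pivot is Lemma~\ref{lin-ver}: if $\psi=\alpha\beta$ is $\pi$-factored with $\psi^G=\chi$, then $(\beta^0)$ is an $\I$-character of $U$, it is \emph{rational-valued} (Corollary~2.14 of \cite{I2018}), and a rational-valued $\I$-character of odd degree is principal (Lemma~5.4 of \cite{I2018}); hence $\psi(1)$ odd forces $\beta$ linear, i.e.\ $\psi(1)$ is a $\pi$-number. This single observation makes ``all vertex pairs linear'' equivalent to the odd-degree condition in (b), after a reduction (inside Lemma~\ref{lin-ver}) that successively replaces $\alpha$ and $\beta$ by primitive characters via Lemma~\ref{spec-ind}. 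The implication ``linear Navarro vertex $\Rightarrow$ all vertex pairs linear'' (and likewise the $\mathcal N$-lift case (c)) is then Theorem~\ref{ver-lin}, proved by induction on $|G|$: one takes the maximal normal $N$ with $\pi$-factored constituents, uses Lemma~3.1 of \cite{WJ2022} to push $U$ up to $NU$, and passes to the Clifford correspondent over a constituent $\theta$ of $\chi_N$. The conjugacy of twisted vertices then comes from Theorem~B of \cite{WJ2022}. None of this requires comparing an arbitrary $(U,\psi)$ to the Navarro nucleus; what it requires is the rational/odd-degree trick, which is the genuine idea you are missing.
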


Here, for a prime $p$ (not necessarily $2$), a {\bf $p$-chain} $\mathcal N$ of $G$ is a chain of normal subgroups of $G$:
$$\mathcal N=\{1=N_0\le N_1\le \cdots \le N_{k-1}\le N_k=G\}$$
such that each $N_i/N_{i-1}$ is either a $p$-group or a $p'$-group,
and $\chi\in\Irr(G)$ is an {\bf $\mathcal N$-lift} if for all $N\in\mathcal N$,
every irreducible constituent of $\chi_N$ is a lift of some irreducible $p$-Brauer character of $N$.
(See \cite{CL2011} for more details.)

As an application of Theorem A, we consider Cossey's conjecture for $p=2$.
Given $\varphi\in\IBr_2(G)$ with $G$ solvable,
we denote by $\LN$ the set of lifts of $\varphi$ that have a linear Navarro vertex,
which cannot be empty by Theorem A(a) of \cite{N2002},
and we write $\LN(Q,\delta)$ for the subset of
those characters in $\LN$ with twisted vertex $(Q,\delta)$.
By Theorem A, $\LN=\bigcup_{i=1}^n\LN(Q,\delta_i)$ is a disjoint union,
where $\{\delta_1,\ldots,\delta_n\}$ is a set of representatives of the $N_G(Q)$-orbits in $\Lin(Q)$ as before. Also, we can prove a weaker form of Cossey's conjecture for $p$ even ``one vertex pair at a time"; compare the following result with Theorem \ref{C-L}.

\begin{thmA}
Let $G$ be a solvable group, and let $\varphi\in\IBr_2(G)$ with vertex $Q$.
Then
$$|\LN(Q,\delta)|\le |N_G(Q):N_G(Q,\delta)|$$
for all $\delta\in\Lin(Q)$. In particular, $|\LN|\le |Q:Q'|$.
\end{thmA}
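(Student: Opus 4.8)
The plan is to mirror the "one vertex pair at a time" strategy that works for $p$ odd (Theorem~\ref{C-L}), using Theorem~A as the substitute for Lemma~\ref{odd}. Fix $\varphi\in\IBr_2(G)$ with vertex $Q$. By Theorem~A(a), every $\chi\in\LN$ has all of its vertex pairs linear, and all of its twisted vertices linear and conjugate in $G$; moreover, since a linear vertex pair of a lift of $\varphi$ has vertex group a vertex of $\varphi$, we may take $Q$ itself as the first coordinate of a twisted vertex of each $\chi\in\LN$. Thus each $\chi\in\LN$ has a twisted vertex of the form $(Q,\delta)$ with $\delta\in\Lin(Q)$, well-defined up to $N_G(Q)$-conjugacy, and $\LN=\bigcup_{i=1}^{n}\LN(Q,\delta_i)$ is a disjoint union over a set of $N_G(Q)$-orbit representatives $\delta_1,\dots,\delta_n$ in $\Lin(Q)$. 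Granting the per-vertex bound $|\LN(Q,\delta)|\le|N_G(Q):N_G(Q,\delta)|$, we sum: $|\LN|=\sum_{i=1}^{n}|\LN(Q,\delta_i)|\le\sum_{i=1}^{n}|N_G(Q):N_G(Q,\delta_i)|=|\Lin(Q)|=|Q:Q'|$, exactly as in the discussion following Lemma~\ref{odd}. So everything reduces to the per-vertex inequality.

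To prove $|\LN(Q,\delta)|\le|N_G(Q):N_G(Q,\delta)|$, I would induct on $|G|$. The mechanism should parallel the Cossey--Lewis proof of Theorem~\ref{C-L}: pass to a normal subgroup $N$ chosen from a $2$-chain (or simply a maximal normal subgroup), use Clifford theory to relate lifts of $\varphi$ with a given twisted vertex to lifts of the Clifford correspondents over $N$, and invoke the compatibility of Navarro vertices and twisted vertices with Clifford correspondence --- this is precisely the content built into Theorem~A and the machinery of \cite{WJ2022}. Concretely, if $\theta\in\Irr(N)$ lies under $\chi$, one studies $\Irr(G\mid\theta)$ via the stabilizer $T=G_\theta$; the characters in $\LN(Q,\delta)$ lying over a fixed $N_G(Q)$-orbit of such $\theta$ are counted by applying the inductive hypothesis in $T$ (when $T<G$) and then using the Clifford correspondence $\Irr(T\mid\theta)\to\Irr(G\mid\theta)$, which preserves twisted vertices. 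The fully ramified / $T=G$ case is where $N$ is central-by-something and one must analyze how $\delta$ restricts; here one uses that $\chi$ has a linear Navarro vertex, so by Theorem~A(b) every primitive character inducing $\chi$ has odd degree, which controls the $2$-part of $\chi(1)$ relative to $|G:Q|$ and forces the character-five-type counting to behave as in the abelian-vertex case.

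The main obstacle I expect is the genuinely $2$-specific step that has no analogue for $p$ odd: when $Q$ is nonabelian, a single $N_G(Q)$-orbit of linear characters $\delta$ can a priori be hit by "too many" lifts, because the twisted vertex $(Q,\delta)$ is defined using the twisting data (a character of a section of $G$, not of $Q$ alone), and two non-conjugate \emph{ordinary} vertex pairs may collapse to conjugate \emph{twisted} vertices. Showing that this collapsing does not inflate $|\LN(Q,\delta)|$ beyond $|N_G(Q):N_G(Q,\delta)|$ requires a careful bookkeeping of the twisting character along the $2$-chain --- essentially proving that the map sending $\chi\in\LN(Q,\delta)$ to its Clifford-theoretic data over the top nontrivial term of the chain is injective on each $N_G(Q,\delta)$-orbit. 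I would isolate this as a lemma about the behaviour of linear twisted vertices under the "go down one step in a $2$-chain" operation, proved by combining Theorem~A(c) (which guarantees $\mathcal N$-lifts have linear Navarro vertices, keeping us inside $\LN$ after restriction) with the explicit description of twisted vertices in \cite{WJ2022}. Once that injectivity lemma is in hand, the induction closes routinely and the summation above yields the stated bounds.
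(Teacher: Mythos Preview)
Your overall strategy---induction on $|G|$, Clifford theory through a normal subgroup, then sum over orbit representatives---is the right one, and matches the paper's architecture. But two concrete ingredients are missing, and without them the induction does not close.

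First, the Clifford step does not work cleanly with twisted vertices and ordinary induction when $2\notin\pi$. The paper handles this by introducing a further modification, the \emph{$*$-vertex}: one replaces $\psi^G=\chi$ by the $\pi$-induction $\psi^{\pi G}=(\delta_{(G,U)}\psi)^G=\chi$, and defines the $*$-vertex via this. Then Theorem~A gives that all $*$-vertices of a good lift are conjugate, and crucially the Clifford correspondence for $\pi$-induction (Isaacs, Lemma~7.5 of \cite{I1986}) preserves $*$-vertices on the nose. Your proposed ``injectivity lemma about twisted vertices going down one step'' would, if carried out, amount to rediscovering this mechanism; working directly with twisted vertices and ordinary induction introduces exactly the sign-character bookkeeping that the $*$-vertex absorbs.

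Second, and more seriously: after you pass to $T=G_\theta$ (or $V=G_\theta$ in the paper's notation) and apply the Clifford correspondence, the images of the characters in $\LN(Q,\delta)$ are lifts in $V$ not of a single $\pi$-partial character, but of potentially several distinct $\zeta_1,\dots,\zeta_m\in\I(V)$, each with vertex $Q$ and each inducing $\varphi$. The inductive hypothesis bounds each $|{\rm L}^*_{\zeta_i}(Q,\delta)|$ by $|N_V(Q):N_V(Q,\delta)|$, but you then need the separate bound
\[
m=|\{\zeta\in\I(V\,|\,Q):\zeta^G=\varphi\}|\le |N_G(Q):N_V(Q)|
\]
to finish. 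This is a nontrivial counting result in its own right (the paper's Corollary~\ref{L2010-2.2}), proved via Lewis's minimal-counterexample lemma \cite{L2010} together with a $\B_\pi$-character argument that invokes the odd-order theorem to rule out the $\pi$-chief-factor case. Your outline assumes implicitly that a single partial character appears in $T$; the ``fully ramified / $T=G$'' discussion does not address this multiplicity, and without the bound on $m$ the product $m\cdot|N_V(Q):N_V(Q,\delta)|$ need not equal $|N_G(Q):N_G(Q,\delta)|$.
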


To prove Cossey's conjecture in the situation of Theorem B,
it is natural to ask when
it is true that $\LN$ and $\LP$ coincide (that is,
every lift of $\varphi$ has a linear Navarro vertex).
By Theorem A, $\LN=\LP$ if and only if
for any lift $\chi$ of $\varphi$, each primitive character inducing $\chi$ has odd degree.
As an easy consequence of Theorems A and B, we present the following
purely group-theoretic condition.

\begin{corA}
Let $G$ be a solvable group, and let $\varphi\in\IBr_2(G)$ with vertex $Q$.
Assume that there exist normal subgroups $N\le M$ of $G$ satisfying

{\rm (a)} $N$ has odd order,

{\rm (b)} all Sylow subgroups of $M/N$ are abelian,

{\rm (c)} $G/M$ is supersolvable.\\
Then $\LN=\LP$, and hence $|\LP|\le |Q:Q'|$.
\end{corA}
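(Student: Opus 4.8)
The goal is Corollary C: under hypotheses (a)–(c) we want to conclude $\LN=\LP$, after which Theorem B immediately gives $|\LP|\le|Q:Q'|$.

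The plan is to reduce to the criterion in Theorem A(b): $\LN=\LP$ iff for every lift $\chi\in\LP$, every primitive character inducing $\chi$ has odd degree. So fix a lift $\chi$ of $\varphi$ and a primitive character $\psi\in\Irr(H)$ with $\psi^G=\chi$ (and $H\le G$); I must show $\psi(1)$ is odd. First I would pass to the normal subgroups: since $N\NM G$ has odd order, $\psi_{H\cap N}$ has odd degree constituents, and more importantly $\psi$ restricted down to $H\cap N$ behaves well — the real content is controlling $\psi(1)$ via the chain $N\le M\le G$. The key structural fact is that a chain $1\le N\le M\le G$ with $N$ of odd order, $M/N$ having abelian Sylows, and $G/M$ supersolvable can be refined to a $2$-chain $\mathcal N$ of $G$ (each factor a $2$-group or $2'$-group): $N$ is a $2'$-group; $M/N$ splits via a Hall system into a $2$-part and a $2'$-part (abelian Sylow $2$-subgroup of $M/N$ is normal in... well, one needs a normal series, which supersolvability-type arguments or the abelian-Sylow hypothesis provides after intersecting with the chief series); and $G/M$ supersolvable has a chief series with factors of prime order, which can be regrouped into $2$-groups and $2'$-groups. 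The hard part will be checking that these refinements can be chosen to be chains of subgroups \emph{normal in $G$} — supersolvability of $G/M$ gives a $G/M$-chief series with cyclic (prime-order) factors, so regrouping the $2$ and $2'$ factors is fine, but for $M/N$ one must use that $M/N$ has abelian Sylow subgroups together with the action of $G$ to extract a $G$-invariant normal $2'$-subgroup (e.g. $\mathrm{O}_{2'}(M/N)$) whose quotient is a $2$-group — here abelianness of the Sylow $2$-subgroup of $M/N$ is what forces $M/N = \mathrm{O}_{2'}(M/N)\times(\text{Sylow }2)$ only when the Sylow $2$ is central, which need not hold, so more care is needed: one should instead take the chief series of $G$ through $N$ and $M$ and observe each chief factor inside $M/N$ is a $2$-group or $2'$-group precisely because abelian Sylow $2$-subgroups force the $2$-chief factors to be... no — actually any chief factor of a solvable group is an elementary abelian $q$-group for a single prime $q$, hence automatically a $2$-group or $2'$-group. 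So in fact the chief series of $G$ itself is always a $2$-chain in the required sense! The hypotheses (a)–(c) must therefore be doing something subtler than producing a $2$-chain.

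Rereading, the point of (a)–(c) must be to guarantee that $\chi$ is an $\mathcal N$-lift for \emph{some} $2$-chain $\mathcal N$ — but by the above every chief series is a $2$-chain, so what is needed is that $\chi$ is actually an $\mathcal N$-lift, i.e.\ all constituents of $\chi_N$ for $N$ in the chain are themselves lifts. That is a genuine restriction. So the corrected plan is: use Theorem A(c), which says any $\mathcal N$-lift has a linear Navarro vertex; it therefore suffices to show that under (a)–(c), \emph{every} lift $\chi$ of $\varphi$ is an $\mathcal N$-lift for a suitable $2$-chain $\mathcal N$. I would build $\mathcal N$ refining $1\le N\le M\le G$ and prove the $\mathcal N$-lift property factor by factor. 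For the bottom factor $N$ (odd order): any irreducible constituent of $\chi_N$ has odd degree and lies in a $2'$-group, so it is trivially a lift (its restriction to $2$-regular — all — elements is an irreducible Brauer character). For the top part $G/M$ supersolvable: here one invokes the known result (CLN2011 or CL2011, available as cited background) that in supersolvable — indeed monomial — quotients, lifts restrict to lifts along normal subgroups; more precisely constituents of $\chi$ restricted to subgroups in a chief series of $G/M$ stay lifts because supersolvable groups have the relevant "going-down" property for lifts. The genuinely delicate middle piece is $M/N$ with abelian Sylow subgroups: here I would cite the abelian-vertex results (Theorem 3 of \cite{CL2010}, and \cite{CLN2011}) — when Sylow subgroups are abelian, Navarro vertices are abelian hence the lift theory is well-behaved and constituents along a normal chief series remain lifts.

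I expect the main obstacle to be precisely this middle layer: proving that if $\bar M = M/N$ has abelian Sylow subgroups then every constituent of $\chi_{\bar M}$ (pulled back to $M$) is a lift and this persists down a $G$-chief series through $\bar M$. The cleanest route is probably: reduce mod $N$ using that $N$ has odd order so $\IBr_2(G)\leftrightarrow\IBr_2(G/N)$-type correspondences and lift-correspondences are compatible; then in $G/N$, with the image $\bar M$ having abelian (hence in particular nilpotent-by-nothing) Sylow $2$-subgroups, use that a Sylow $2$-subgroup of $\bar M$ is a vertex-type object that is abelian, so by the abelian-vertex case the restriction of the lift to $\bar M$ decomposes into lifts; finally, since $\bar M$'s relevant $2$-structure is abelian, the chief factors of $G$ inside $\bar M$ are handled by ordinary Clifford theory plus the fact that constituents of a lift restricted to a normal $2'$-subgroup or a central-type $2$-section are again lifts. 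Once the $\mathcal N$-lift property is secured, Theorem A(c) gives a linear Navarro vertex for every lift, hence $\LN=\LP$, and then Theorem B yields $|\LP|=|\LN|\le|Q:Q'|$, completing the proof.
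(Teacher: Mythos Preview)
Your initial instinct to use Theorem~A(b) was correct, and you abandoned it too quickly. The paper stays with that route (more precisely, the refinement in Lemma~\ref{lin-ver}(c)) and the argument is short: the hypotheses (b) and (c) say that $G/N$ has a normal subgroup $M/N$ with all Sylow subgroups abelian and supersolvable quotient $G/M$, so by Theorem~6.23 of \cite{I1976} \emph{every subgroup of $G/N$ is an $M$-group}. Now take a $\pi$-factored $\psi\in\Irr(U)$ inducing $\chi$ with $\psi_{\pi'}$ primitive; by Lemma~\ref{lin-ver}(c) we may assume $U\supseteq O_\pi(G)O_{\pi'}(G)\supseteq N$ (the last inclusion because $|N|$ is odd and $\pi=\{2\}'$). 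Since $\psi_{\pi'}$ is $\pi'$-special and $N$ is a $\pi$-group, $N\le\Ker\psi_{\pi'}$, so $\psi_{\pi'}$ is a primitive character of the $M$-group $U/N$ and hence linear. Thus $\psi(1)$ is odd, and Theorem~A(b) gives $\chi\in\LN$.

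Your detour through Theorem~A(c) has a genuine gap. To show $\chi$ is an $\mathcal N$-lift you must verify that every constituent of $\chi_L$ is a lift for each $L\in\mathcal N$, in particular for $L=M$ and for the intermediate terms between $M$ and $G$. You write that ``in supersolvable quotients, lifts restrict to lifts along normal subgroups,'' but this is not a known theorem; in fact the failure of this going-down property is exactly why the $\mathcal N$-lift condition is a nontrivial hypothesis rather than an automatic consequence. Your appeal to abelian-vertex results for the middle layer $M/N$ is similarly circular: those results bound the number of lifts once you already know certain constituents are lifts with abelian vertex, but they do not manufacture the lift property for constituents of $\chi_M$ out of the structural hypotheses on $M/N$. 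The $M$-group observation bypasses all of this by attacking the primitive inducing characters directly rather than trying to control restrictions of $\chi$.
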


We end this introduction with some remarks.
As usual, we will work with Isaacs' $\pi$-partial characters in $\pi$-separable groups rather that Brauer characters in $p$-solvable groups.
For the reader's convenience, we will briefly review the definitions and basic properties of
$\pi$-partial characters and the associated vertices in the next section
(for more details see Isaacs' recent new book \cite{I2018}).
Here we emphasize that the ``classical'' case of Brauer characters is exactly the situation where
$\pi$ is the complement $p'$ of $\{p\}$ in the set of all prime numbers.
Actually, our proof of Theorem B is inspired by the arguments used by Cossey and Lewis
in the preprint \cite{CL2010} to handle the case $2\in\pi$.
Nevertheless, in our case where $2\notin\pi$ we need to use the $\pi$-induction of characters defined by Isaacs \cite{I1986} to replace ordinary induction,
and modify again the definition of Cossey's vertex pairs
for any $\chi\in\Irr(G)$ to obtain a new one which we call a $*$-vertex for $\chi$
(see Definition \ref{*}).
As expected, all of the $*$-vertices for every member of $\LN$ are still conjugate
(see Theorem \ref{unique}), which is also complementary to Lemma \ref{odd}
and is crucial for our goal,
and the proof is an immediate application of Theorem B of \cite{WJ2022}.

Throughout this paper, all groups are finite,
and most of the notation and results
can be found in \cite{I1976,N1998,I2018} for ordinary characters,
Brauer characters and Isaacs' $\pi$-partial characters.

\section{Preliminaries}
In this section, we briefly review some basic notions and results from Isaacs' $\pi$-theory
needed for our proofs.

\medskip
\noindent{\bf 2.1 \ $\pi$-Partial characters}

Following Isaacs \cite{I2018}, we fix a set $\pi$ of primes and let $G$ be a $\pi$-separable group. We write $G^0$ for the set of $\pi$-elements of $G$, and for any complex character $\chi$ of $G$, we use $\chi^0$ to denote the restriction of $\chi$ to $G^0$.
We call $\chi^0$ a {\bf $\pi$-partial character} of $G$.
If a $\pi$-partial character $\varphi$ cannot be written as a sum of two $\pi$-partial characters,
we say that $\varphi$ is {\bf irreducible} or that $\varphi$ is an {\bf $\I$-character}. The set of all $\I$-characters of $G$ is denoted $\I(G)$.
Also, if $\chi\in\Irr(G)$ satisfying $\chi^0=\varphi$ for some $\varphi\in\I(G)$,
then $\chi$ is called a {\bf $\pi$-lift}, or a {\bf lift} for short
when there is no risk of confusion.
We write
$$\LP=\{\chi\in\Irr(G)\,|\, \chi^0=\varphi\}$$
for the set of all lifts for $\varphi$.

By the Fong-Swan theorem,
it follows that if $\pi=p'$, the set of primes different from $p$,
then the $\pi$-partial characters of $G$ are exactly the $p$-Brauer characters in $p$-solvable groups. In this case, we have $\I(G)=\IBr_p(G)$.

Furthermore, Isaacs constructed a subset $\B(G)$ of $\Irr(G)$,
which is a canonical lift of $\I(G)$,
so the map $\chi\mapsto \chi^0$ defines a bijection from $\B(G)$ onto $\I(G)$
(see Theorem 5.1 of \cite{I2018}).

We now consider the vertices for $\I$-characters.
Given $\varphi\in\I(G)$, a {\bf vertex} for $\varphi$ is defined to be any Hall $\pi'$-subgroup of a subgroup $U$ of $G$ such that there exists $\theta\in\I(U)$, where $\theta^G=\varphi$ and $\theta(1)$ is a $\pi$-number.
We use $\I(G|Q)$ to denote the set of $\I$-characters of $G$ having $Q$ as a vertex.

The following result is fundamental.

\begin{lem}[Theorem 5.17 of \cite{I2018}]\label{vtx}
Let $G$ be $\pi$-separable, where $\pi$ is a set of primes, and let $\varphi\in\I(G)$.
Then all vertices for $\varphi$ form a single conjugacy class of $\pi'$-subgroups of $G$.
\end{lem}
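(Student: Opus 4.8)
Call a pair $(U,\theta)$ with $U\le G$, $\theta\in\I(U)$, $\theta^G=\varphi$ and $\theta(1)$ a $\pi$-number an \emph{inductive pair} for $\varphi$; by definition a vertex for $\varphi$ is any member of $\Hall_{\pi'}(U)$ for such a pair. The plan is first to record two elementary facts. (i) For any inductive pair, $|G:U|\,\theta(1)=\varphi(1)$ with $\theta(1)$ a $\pi$-number, so $|U|_{\pi'}=|G|_{\pi'}/\varphi(1)_{\pi'}$ is independent of the pair; in particular all vertices have the same order, and since $G$ is $\pi$-separable, Hall $\pi'$-subgroups of $G$ exist and are conjugate, so it suffices to control one per pair. (ii) If $(U,\theta)$ is inductive and $\theta=\gamma^U$ with $\gamma\in\I(W)$, $W\le U$, and $\gamma(1)$ a $\pi$-number, then $(W,\gamma)$ is again inductive and $|U:W|=\theta(1)/\gamma(1)$ is a $\pi$-number, so a Hall $\pi'$-subgroup of $W$ is $U$-conjugate to one of $U$. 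Iterating (ii), every inductive pair ``$\pi$-shrinks'', without changing the vertex up to conjugacy, to one that is \emph{$\pi$-primitive} (not of the form $\gamma^U$ with $\gamma(1)$ a $\pi$-number for any proper $W<U$); call such a pair a \emph{nucleus} for $\varphi$. Thus the theorem reduces to showing that \emph{any two nuclei for $\varphi$ are $G$-conjugate}, because then all vertices are Hall $\pi'$-subgroups of $G$-conjugate nuclei.

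\textbf{The Clifford reduction.}
I would prove uniqueness of nuclei by induction on $|G|$. If a nucleus $(W,\gamma)$ has $W=G$, then $\varphi$ itself is $\pi$-primitive with $\varphi(1)$ a $\pi$-number, and fact~(i) together with $\pi$-primitivity should force every nucleus to be $(G,\varphi)$. Otherwise, since $G$ is $\pi$-separable, pick a minimal normal subgroup $N$, which is either a $\pi$-group or a $\pi'$-group. Take the ``honest'' case first: $N$ a $\pi$-group (hence elementary abelian), so $\varphi_N$ is a genuine character; let $\vartheta\in\Irr(N)$ lie under $\varphi$ and $T=I_G(\vartheta)$. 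If $T<G$, the Clifford correspondence for $\pi$-partial characters produces a unique $\psi\in\I(T)$ over $\vartheta$ with $\psi^G=\varphi$; the task is then to match inductive pairs, i.e. to show via a Mackey-type comparison that an inductive pair $(U,\theta)$ for $\varphi$ meets $T$ in an inductive pair for $\psi$ of the same $\pi'$-order (so $\Hall_{\pi'}(U)$ is conjugate into $T$), and conversely, after which induction in $T$ finishes this branch. The $N$ a $\pi'$-group case is parallel but dual: restriction to $N$ is vacuous, so $N$ must be used on the vertex side, via a $\pi'$-special character under $\varphi$ (or a constituent of the canonical lift $\hat\varphi\in\B(G)$ over $N$); one again splits according to whether the relevant inertia subgroup is proper — apply induction there — or all of $G$.

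\textbf{The main obstacle: the invariant case.}
The hard part will be the situation where the chosen $\vartheta$ (or its $\pi'$-analogue) is $G$-invariant, so no Clifford reduction to a smaller group is available. Here the plan is to ``peel off'' $\vartheta$: an invariant $\pi$-special character of a normal $\pi$-subgroup of a $\pi$-separable group extends to a $\pi$-special character of $G$, and this yields a Gallagher-type bijection between the lifts of $\varphi$-type characters over $\vartheta$ and the corresponding objects of $G/N$ (after passing to a suitable central extension when $\vartheta\ne 1_N$), with the $\pi'$-analogue handled symmetrically, the vertex there absorbing a Hall $\pi'$-subgroup of $N$. Making this precise — and, crucially, checking that the associated character-triple isomorphism is compatible with $\pi$-induction and with the formation of Hall $\pi'$-subgroups, so that nuclei (hence vertices) correspond — is exactly where the real work lies; the non-invariant case is then routine Clifford theory and the primitive case is bookkeeping with fact~(i). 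I would lean on Isaacs' development of $\pi$-special and $\pi$-factored characters and their nuclei in \cite{I2018} to supply the extendibility and triple-compatibility statements rather than reproving them.
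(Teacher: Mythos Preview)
The paper does not prove this lemma at all: it is stated with the attribution ``Theorem 5.17 of \cite{I2018}'' and no proof is given, because it is quoted as a background result from Isaacs' book. So there is nothing to compare your proposal against within this paper; the authors simply invoke the theorem as known.

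As for the proposal itself as an independent argument: the overall architecture (reduce to nuclei, then prove conjugacy of nuclei by induction via Clifford theory over a minimal normal subgroup, splitting into the non-invariant and invariant cases) is indeed the shape of Isaacs' proof. But you have correctly identified, and not resolved, the actual content. In the base case $W=G$ you assert that $\pi$-primitivity of $\varphi$ forces every nucleus to equal $(G,\varphi)$; this is not automatic from fact~(i), since another inductive pair $(U,\theta)$ with $U<G$ is not a priori excluded by $\varphi$ being $\pi$-primitive --- one still needs an argument that $\theta^G=\varphi$ with $\theta(1)$ a $\pi$-number forces $U=G$. In the Clifford step, the assertion that an inductive pair $(U,\theta)$ for $\varphi$ ``meets $T$ in an inductive pair for $\psi$ of the same $\pi'$-order'' is exactly the nontrivial transitivity lemma one has to prove (this is where Isaacs works hardest, using $\pi$-factored characters rather than a bare Mackey computation). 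And in the invariant case you defer entirely to \cite{I2018} for the extension and character-triple compatibility statements, which is the whole proof. So your write-up is a correct table of contents for Isaacs' argument rather than a proof; for the purposes of this paper, citing \cite{I2018} as the authors do is the right move.
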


\medskip
\noindent{\bf 2.2 \ $\pi$-Factored characters}

Let $\chi\in\Irr(G)$, where $G$ is a $\pi$-separable group.
We say that $\chi$ is {\bf $\pi$-special} if $\chi(1)$ is a $\pi$-number and
the determinantal order $o(\theta)$ is a $\pi$-number for every irreducible constituent $\theta$ of the restriction $\chi_S$ for every subnormal subgroup $S$ of $G$.
The set of $\pi$-special characters of $G$ is denoted $\X(G)$.

\begin{lem}[Theorem 2.10 of \cite{I2018}]\label{res}
Let $G$ be $\pi$-separable, and let $H\le G$ have $\pi'$-index.
Then restriction $\chi\mapsto\chi_H$ defines an injection from $\X(G)$ into $\X(H)$.
\end{lem}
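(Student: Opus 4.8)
The plan is to prove both assertions — that $\chi \mapsto \chi_H$ carries $\X(G)$ into $\X(H)$, and that it is injective — by one induction on $|G|$, the inductive device being a minimal normal subgroup $N$ of $G$. Since $G$ is $\pi$-separable and $N$ is characteristically simple, $N$ is an elementary abelian $q$-group for a single prime $q$, so $N$ is either a $\pi$-group or a $\pi'$-group, and the two cases are handled separately. Along the way I would freely use the structural properties of $\pi$-special characters established before this lemma in Isaacs' book: that irreducible constituents of restrictions to normal subgroups remain $\pi$-special, that inflation from quotients preserves $\pi$-specialness, and — most importantly — that $\pi$-specialness is compatible with the Clifford correspondence, in the sense that for $N \NM G$, $\theta \in \X(N)$ with inertia group $I = I_G(\theta)$, and $\psi \in \Irr(I \mid \theta)$, one has $\psi \in \X(I)$ if and only if $\psi^G \in \X(G)$.

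For the first assertion, suppose $q \in \pi'$. Applying the definition of $\pi$-special to the normal subgroup $N$, every irreducible constituent $\theta$ of $\chi_N$ has $o(\theta)$ a $\pi$-number; but $\theta$ is a character of the $\pi'$-group $N$, so $o(\theta)$ divides $|N|$, while $\theta(1)$ divides both the $\pi$-number $\chi(1)$ and the $\pi'$-number $|N|$. Hence $\theta = 1_N$, so $N \le \Ker\chi$, the character $\chi$ is inflated from some $\pi$-special character of $G/N$, and since $[G/N : HN/N]$ divides the $\pi'$-number $[G:H]$, induction applied in $G/N$ together with $HN/N \cong H/(H\cap N)$ yields $\chi_H \in \X(H)$. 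Now suppose $q \in \pi$. Then $[HN:H] = [N : H\cap N]$ is a $\pi$-number dividing the $\pi'$-number $[G:H]$, so $N \le H$. Choose an irreducible constituent $\theta$ of $\chi_N$ (a linear $\pi$-special character of $N$), let $I = I_G(\theta)$, and let $\psi \in \Irr(I\mid\theta)$ be the Clifford correspondent, so $\psi^G = \chi$ and $[G:I]$, a divisor of $\chi(1)$, is a $\pi$-number. If $I = G$ and $\theta = 1_N$, then again $N \le \Ker\chi$ and we reduce to $G/N$. If $I < G$, then $HI = G$, since $[G:HI]$ divides both the $\pi$-number $[G:I]$ and the $\pi'$-number $[G:H]$; hence $[I:H\cap I] = [G:H]$ is a $\pi'$-number, and as $\psi \in \X(I)$ by the Clifford-correspondent property, induction gives $\psi_{H\cap I} \in \X(H\cap I)$; since $H\cap I = I_H(\theta)$, Mackey's formula together with the Clifford correspondence inside $H$ give $\chi_H = (\psi_{H\cap I})^H \in \X(H)$. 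This leaves the case $I = G$ with $\theta \ne 1_N$, where one replaces $(G,N,\theta)$ by a character triple in which $N$ becomes central, via an isomorphism of character triples respecting $\pi$-specialness and commuting with restriction to subgroups of $\pi'$-index, thereby reducing to cases already handled. This last case is the main obstacle: it is precisely where one must lean on the character-triple machinery for $\pi$-special characters rather than argue directly. (If one only wants irreducibility of $\chi_H$, deferring its $\pi$-specialness, it follows effortlessly from the fact that $H$ contains a Hall $\pi$-subgroup $P$ of $G$ and $\pi$-special characters restrict irreducibly to $P$: the irreducibility of $\chi_P = (\chi_H)_P$ forces $\chi_H$ to be irreducible.)

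For injectivity, suppose $\chi, \chi' \in \X(G)$ with $\chi_H = \chi'_H$, and run the same induction. If $q \in \pi'$, the argument above gives $N \le \Ker\chi \cap \Ker\chi'$, so $\chi, \chi'$ are inflations of $\pi$-special characters of $G/N$ with equal restrictions to $HN/N$, and induction forces these equal, hence $\chi = \chi'$. If $q \in \pi$, then $N \le H$ and $\chi_N = (\chi_H)_N = (\chi'_H)_N = \chi'_N$, so $\chi$ and $\chi'$ lie over a common irreducible constituent $\theta$ of $\chi_N$; when $\theta$ is $G$-invariant we reduce to $G/N$ (if $\theta = 1_N$) or again invoke the character-triple reduction (if $\theta \ne 1_N$), and otherwise the Clifford correspondents $\psi, \psi'$ of $\chi, \chi'$ satisfy $(\psi_{H\cap I})^H = \chi_H = \chi'_H = (\psi'_{H\cap I})^H$, so $\psi_{H\cap I} = \psi'_{H\cap I}$ by uniqueness in the Clifford correspondence, whence $\psi = \psi'$ by induction and $\chi = \psi^G = (\psi')^G = \chi'$. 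The base case $G = H$ is trivial. In sum, every step is formal once the two structural inputs about $\pi$-special characters — stability under the Clifford correspondence and under $\pi$-compatible character triple isomorphisms — are granted, and it is the $G$-invariant non-principal constituent case, settled by the latter, that carries the real content.
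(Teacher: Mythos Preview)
The paper does not supply its own proof of this lemma: it is simply quoted from Isaacs' book as Theorem~2.10 of \cite{I2018}, with no argument given. So there is nothing in the paper to compare your proposal against directly.

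On the substance of your sketch, two remarks. First, a minor slip: a minimal normal subgroup of a $\pi$-separable group need not be an elementary abelian $q$-group --- that conclusion requires solvability. What is true, and what your argument actually uses, is only that $N$ is either a $\pi$-group or a $\pi'$-group; the parenthetical ``(a linear $\pi$-special character of $N$)'' is unjustified in general, though the subsequent Clifford-correspondence argument does not depend on it.

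Second, and more seriously, the character-triple step in the case $I_G(\theta)=G$ with $\theta\ne 1_N$ is where your induction is incomplete. You invoke an isomorphism of character triples ``respecting $\pi$-specialness and commuting with restriction to subgroups of $\pi'$-index'' and declare this reduces to ``cases already handled'', but two things are missing. The existence of character-triple isomorphisms compatible with $\pi$-specialness is itself a substantial piece of machinery (developed in Isaacs' book prior to this theorem), not a triviality to be assumed. More to the point, it is not clear your induction on $|G|$ makes progress: if $N$ is already cyclic of prime order then the replacement triple $(G^*,N^*,\theta^*)$ can have $|G^*|=|G|$, and with $N^*$ central and $\theta^*$ still $G^*$-invariant and nontrivial you are back in exactly the same case. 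You would need either a secondary induction parameter, or a direct argument once $N$ is central (and $\theta$ need not extend, so ``peeling it off'' is not automatic). The Hall-$\pi$-subgroup route you mention parenthetically handles irreducibility and injectivity cleanly; it is precisely the $\pi$-specialness of $\chi_H$ that carries the content, and that is the step your induction leaves unfinished.
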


\begin{lem}[Theorem 3.14 of \cite{I2018}]\label{lift}
Let $G$ be $\pi$-separable. Then the map $\chi\mapsto \chi^0$ defines an injection from $\X(G)$ into $\I(G)$.
\end{lem}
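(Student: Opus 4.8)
\medskip
\noindent\textbf{Proof sketch.}
The plan is to prove the two assertions---that $\chi^0$ is irreducible, so that $\chi\mapsto\chi^0$ really maps into $\I(G)$, and that this map is injective on $\X(G)$---together, by induction on $|G|$. If $G$ is a $\pi$-group then $G^0=G$ and there is nothing to prove, so assume $G$ is not a $\pi$-group.

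First I would dispose of two opposite boundary situations. If $G$ has a minimal normal subgroup $N$ that is a $\pi'$-group, then for $\chi\in\X(G)$ the irreducible constituents of $\chi_N$ have degree dividing $\gcd(\chi(1),|N|)=1$ and $\pi$-power determinantal order dividing $|N|$, hence are trivial; so $N\le\Ker\chi$ and $\chi$ is inflated from a member of $\X(G/N)$, and the inductive hypothesis for $G/N$ finishes the case (for $N$ a $\pi'$-group, inflation identifies $\I(G/N)$ with the set of $\I$-characters of $G$ having $N$ in their kernel). In the opposite situation $G$ has a normal subgroup $N$ with $G/N$ a nontrivial $\pi'$-group; then $G^0=N^0$, so $\chi^0=(\chi_N)^0$ as functions, while $\chi_N\in\X(N)$ by Lemma \ref{res}. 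By induction $(\chi_N)^0\in\I(N)$, and since $G^0=N^0$ any decomposition of $\chi^0$ into $\pi$-partial characters of $G$ is also one into $\pi$-partial characters of $N$, so $\chi^0\in\I(G)$; and injectivity holds because restriction to $N$ is injective on $\X(G)$ and, by induction, $\chi_N$ is recovered from $(\chi_N)^0=\chi^0$. Thus we may assume $\mathbf O_{\pi'}(G)=1$, so that $V:=\mathbf O_\pi(G)=\mathbf F(G)$ is nontrivial and $\mathbf C_G(V)\le V$.

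The next step is a Clifford reduction to the quasi-primitive case. If $\chi_M$ is non-homogeneous for some $M\NM G$, pick an irreducible constituent $\theta$ of $\chi_M$; then $\theta\in\X(M)$ (constituents of $\pi$-special characters over normal subgroups are $\pi$-special), $T:=I_G(\theta)<G$, and $\chi=\psi^G$, where $\psi\in\X(T)$ is the Clifford correspondent of $\chi$ over $\theta$ (such a correspondent is again $\pi$-special). By induction $\theta^0\in\I(M)$, and injectivity for $M$ gives $I_G(\theta^0)=T$; moreover $(\psi^G)^0=(\psi^0)^G$ (induction commutes with restriction to $\pi$-elements) and $\psi^0\in\I(T)$ lies over $\theta^0$, again by induction. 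The Clifford correspondence for $\pi$-partial characters then yields $\chi^0=(\psi^0)^G\in\I(G)$; and if $\alpha,\beta\in\X(G)$ satisfy $\alpha^0=\beta^0$, comparing the decompositions of $(\alpha_M)^0=(\beta_M)^0$ into $\I$-characters of $M$ shows---using injectivity for $M$---that $\alpha$ and $\beta$ lie over the same $\theta$ and hence induce from the same $T$, so the injectivity of the $\pi$-partial Clifford correspondence together with induction on $T$ gives $\alpha=\beta$. We may therefore assume $\chi$ (and, for injectivity, also $\alpha,\beta$) is quasi-primitive, whence $\chi_V=e\lambda$ for some $G$-invariant $\lambda\in\Irr(V)$. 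Since $\Ker\lambda\NM G$ is a $\pi$-subgroup contained in $\Ker\chi$ (and, in the injectivity argument, in $\Ker\alpha\cap\Ker\beta$, because $\alpha_V=\beta_V=\chi_V$ as $V\subseteq G^0$), deflation by $\Ker\lambda$---which preserves $\pi$-specialness and commutes with restriction to $\pi$-elements---reduces us (via induction on the order) to the case in which $\lambda$ is faithful.

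This leaves the core case: $\chi\in\X(G)$ quasi-primitive, lying over a faithful $G$-invariant character $\lambda$ of $V=\mathbf F(G)$, with $\mathbf C_G(V)\le V$. This is the crux, and I expect it to be the main obstacle. I would approach it through the theory of character triples, replacing $(G,V,\lambda)$ by an isomorphic triple $(G^*,V^*,\lambda^*)$ with $V^*$ central and $\lambda^*$ linear and faithful, $G^*/V^*\cong G/V$, and then exploiting the resulting structure (together with $\mathbf C_G(V)\le V$) to drive the situation down to a group that is either a $\pi$-group or strictly smaller. The delicate points---where the real content lies---are (i) arranging the character-triple isomorphism so as to respect $\pi$-specialness and to be compatible with restriction to $\pi$-elements, so that irreducibility of $\chi^0$ and injectivity are preserved under the passage to $G^*$, and (ii) closing the argument in $G^*$. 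Granting these, the theorem follows; the reductions in the earlier paragraphs are comparatively routine, relying only on standard properties of $\pi$-special characters under deflation, under restriction to normal subgroups, and under the Clifford correspondence.
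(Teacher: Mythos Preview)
The paper does not supply a proof of this lemma; it is quoted as Theorem~3.14 of \cite{I2018} and used as a black box, so there is no in-paper argument to compare your sketch against.

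Taken on its own merits, your sketch has a genuine gap exactly where you say it does. The reductions---deflating by a normal $\pi'$-subgroup, passing to a normal subgroup of $\pi'$-index, Clifford reduction to the quasi-primitive case, deflating by $\Ker\lambda$---are reasonable, but after all of them you are left with a quasi-primitive $\pi$-special character over a faithful $G$-invariant $\lambda\in\Irr(V)$ with $V=\mathbf F(G)$ and $\mathbf C_G(V)\le V$, and here you prove nothing: you say you ``would approach it through the theory of character triples'' and then list the difficulties without resolving them. The difficulty you label (i) is real and not minor: a standard character-triple isomorphism need not intertwine restriction to $\pi$-elements, nor preserve $\pi$-specialness, so invoking one does not by itself reduce the problem. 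As written, the sketch does not establish the irreducibility of $\chi^0$ in this base case, and hence is not a proof. Two smaller remarks: the injectivity half is much easier than your inductive treatment suggests, since by Lemma~\ref{res} restriction to a Hall $\pi$-subgroup $H$ is already injective on $\X(G)$ and $H\subseteq G^0$; and in your Clifford step the claim that the Clifford correspondent of a $\pi$-special character is again $\pi$-special is itself a nontrivial fact (when $2\notin\pi$ one must contend with the sign character, cf.\ Lemma~\ref{spec-ind}) and deserves a citation rather than a parenthetical.
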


\begin{lem}[Theorem 2.2 of \cite{I2018}] \label{prod}
Let $G$ be $\pi$-separable,
and suppose that $\alpha,\beta\in\Irr(G)$ are $\pi$-special and $\pi'$-special, respectively.
Then $\alpha\beta$ is irreducible.
Also, if $\alpha\beta=\alpha'\beta'$, where $\alpha'$ is $\pi$-special and $\beta'$ is $\pi'$-special, then $\alpha=\alpha'$ and $\beta=\beta'$.
\end{lem}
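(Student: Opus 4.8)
The plan is to prove the two assertions simultaneously by induction on $|G|$, via a Clifford‑theoretic reduction to a ``stable'' situation. For the base of the induction, if $G$ is a $\pi$‑group then $\beta\in\Y(G)$ has $\beta(1)$ and $o(\beta)$ dividing $|G|$, a $\pi$‑number; being also $\pi'$‑numbers these equal $1$, so $\beta=1_G$, $\alpha\beta=\alpha\in\Irr(G)$, and uniqueness is immediate; the case where $G$ is a $\pi'$‑group is symmetric. Now assume $G$ is neither. A top chief factor of the $\pi$‑separable group $G$ is a $\pi$‑group or a $\pi'$‑group, so at least one of $O^{\pi}(G)$, $O^{\pi'}(G)$ is proper in $G$; interchanging the roles of $(\alpha,\pi)$ and $(\beta,\pi')$ if necessary — the statement is symmetric under this swap — I may fix a proper normal subgroup $K$ with $G/K$ a nontrivial $\pi'$‑group. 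Then $[G:K]$ is a $\pi'$‑number, so by Lemma \ref{res} the restriction $\alpha_K\in\X(K)$ is irreducible and $G$‑invariant, and $\alpha$ is the unique member of $\X(G)$ restricting to $\alpha_K$.

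Next I would run the Clifford reduction. Pick an irreducible (hence $\pi'$‑special) constituent $\beta_1$ of $\beta_K$, put $T=I_G(\beta_1)$, and let $\psi\in\Irr(T|\beta_1)$ be the Clifford correspondent of $\beta$, so $\psi^G=\beta$; by the basic theory of special characters $\psi$ is again $\pi'$‑special, and $[G:T]$, which divides $\beta(1)$, is a $\pi'$‑number. By induction in $K$ the product $\gamma:=\alpha_K\beta_1$ lies in $\Irr(K)$, and $I_G(\gamma)=T$ (since $\alpha_K$ is $G$‑invariant and, by the inductive uniqueness in $K$, $\gamma^g=\gamma$ forces $\beta_1^g=\beta_1$). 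If $T<G$, then $\alpha_T\in\X(T)$ by Lemma \ref{res}, so $\alpha_T\psi\in\Irr(T)$ by induction; since $\alpha_T\psi$ lies over $\gamma$ and $\alpha\beta=\alpha\psi^G=(\alpha_T\psi)^G$, the Clifford correspondence relative to $\gamma$ shows $\alpha\beta\in\Irr(G)$ and that $\alpha_T\psi$ is its Clifford correspondent; if $\alpha\beta=\alpha'\beta'$, carrying out the same analysis for $\alpha'\beta'$ and adjusting $\beta'_1$ so that the corresponding $\gamma'$ equals $\gamma$, the two Clifford correspondents coincide, and the inductive uniqueness in $T$ together with the injectivity above gives $\alpha=\alpha'$ and $\beta=\beta'$. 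This reduces everything to the stable case $T=G$: here $\beta_1$, and hence $\gamma$, is $G$‑invariant and $\beta_K=e\beta_1$, so $(\alpha\beta)_K=e\gamma$.

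In the stable case uniqueness is easy: from $\alpha\beta=\alpha'\beta'$ we get $(\alpha'\beta')_K=e\gamma$, a multiple of a single irreducible; decomposing $(\alpha'\beta')_K=\alpha'_K\beta'_K$ and using that $\alpha'_K\beta'_1\in\Irr(K)$ (existence in $K$) forces $\beta'_1$ to be $G$‑invariant with $\alpha'_K\beta'_1=\gamma$, whence the inductive uniqueness in $K$ gives $\alpha_K=\alpha'_K$ and $\beta_1=\beta'_1$, so $\alpha=\alpha'$ and then $\beta=\beta'$. The hard point — the main obstacle — is existence (irreducibility of $\alpha\beta$) in the stable case. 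If $\beta_1$ extends to $G$, one writes $\beta=\widehat{\beta_1}\tau$ with $\tau\in\Irr(G/K)$ by Gallagher's theorem, notes that $\alpha\widehat{\beta_1}$ restricts to $\gamma\in\Irr(K)$ and hence lies in $\Irr(G)$, and concludes $\alpha\beta=(\alpha\widehat{\beta_1})\tau\in\Irr(G)$ by Gallagher again. When $\beta_1$ is ramified in $G$, no such extension is available, and I would instead pass to a character‑triple isomorphism $(G,K,\gamma)\to(G^{*},K^{*},\gamma^{*})$ with $K^{*}$ central and cyclic, arranged to respect the $\pi$‑/$\pi'$‑special structure so that $\alpha$ and $\beta$ acquire $\pi$‑special, respectively $\pi'$‑special, counterparts; this reduces to the case in which $K$ is central, $\gamma$ is a faithful linear character of $\pi'$‑order, $K\le\Ker\alpha$ and $\beta_K=e\gamma$, which can be settled by direct computation. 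Constructing this $\pi$‑compatible character‑triple reduction is the delicate step on which the whole argument turns.
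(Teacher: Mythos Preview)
The paper does not supply its own proof of this lemma: it is quoted as Theorem~2.2 of \cite{I2018} and used as a black box throughout. There is therefore no argument in the paper to compare your attempt against beyond the bare citation.

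On the merits of your outline: the inductive Clifford reduction is set up correctly, the case $T<G$ goes through, and the uniqueness half of the stable case is fine. The genuine gap is exactly where you flag it. In the stable situation with $\beta_1$ ramified you invoke a character-triple isomorphism $(G,K,\gamma)\to(G^{*},K^{*},\gamma^{*})$ ``arranged to respect the $\pi$-/$\pi'$-special structure'', and then say the resulting central case ``can be settled by direct computation''. Neither step is carried out, and neither is routine: ordinary character-triple isomorphisms have no reason to preserve $\pi$-speciality, so producing one that does is not a matter of quoting a standard lemma but essentially a restatement of the theorem; and in your terminal central case you still have an arbitrary $\alpha\in\Irr(G/K)$ of $\pi$-degree and an arbitrary $\beta\in\Irr(G\mid\gamma)$ of $\pi'$-degree, so irreducibility of $\alpha\beta$ is not a computation but the crux of the matter. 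A smaller point worth checking: your claim that the Clifford correspondent $\psi$ of $\beta$ is again $\pi'$-special is true, but it is itself part of the foundational theory of special characters; if you want a self-contained argument you should verify that it does not rely on the very product theorem you are proving. Isaacs' treatment in \cite{I2018} orders these results carefully and handles the stable case via the extension theory for special characters rather than via character triples, which sidesteps the obstacle you have isolated.
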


If $\chi\in\Irr(G)$ can be written as $\chi=\alpha\beta$,
where $\alpha$ is $\pi$-special and $\beta$ is $\pi'$-special,
then $\chi$ is said to be {\bf $\pi$-factored}.
Since $\alpha$ and $\beta$ are uniquely determined by $\chi$,
we often use $\chi_\pi$ and $\chi_{\pi'}$ to replace $\alpha$ and $\beta$, respectively.

By definition, it is easy to see that every normal constituent of a $\pi$-special character is also $\pi$-special, and so we have the following.

\begin{lem}\label{pi-pi}
Let $G$ be $\pi$-separable, and let $N\NM G$.
If $\chi\in\Irr(G)$ is $\pi$-factored, then every irreducible constituent $\theta$ of $\chi_N$
is also $\pi$-factored. Furthermore, $\theta_\pi$ and $\theta_{\pi'}$ lie under $\chi_\pi$
and $\chi_{\pi'}$, respectively.
\end{lem}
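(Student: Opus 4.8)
The plan is to use the factorization $\chi=\chi_\pi\chi_{\pi'}$ together with the fact that restriction of characters is multiplicative, so that $\chi_N=(\chi_\pi)_N\,(\chi_{\pi'})_N$ as characters of $N$, and then to read off the constituents of $\chi_N$ from those of $(\chi_\pi)_N$ and $(\chi_{\pi'})_N$.

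First I would record that every irreducible constituent of $(\chi_\pi)_N$ is $\pi$-special, and every irreducible constituent of $(\chi_{\pi'})_N$ is $\pi'$-special. The first statement is exactly the observation quoted just before the lemma (a normal constituent of a $\pi$-special character is $\pi$-special), and the second follows from it by interchanging the roles of $\pi$ and $\pi'$, since a $\pi$-separable group is also $\pi'$-separable. So I can write $(\chi_\pi)_N=\sum_i a_i\mu_i$ and $(\chi_{\pi'})_N=\sum_j b_j\nu_j$, where the $\mu_i$ are irreducible and $\pi$-special and the $\nu_j$ are irreducible and $\pi'$-special.

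Next, given an irreducible constituent $\theta$ of $\chi_N$, I would expand $\chi_N=\sum_{i,j}a_ib_j\,\mu_i\nu_j$; since $\theta$ occurs with positive multiplicity in this sum, it must be a constituent of some $\mu_i\nu_j$. Here the key input is Lemma \ref{prod}: the product of a $\pi$-special character and a $\pi'$-special character of $N$ is irreducible, so $\mu_i\nu_j\in\Irr(N)$, forcing $\theta=\mu_i\nu_j$. In particular $\theta$ is $\pi$-factored. Finally, the uniqueness clause of Lemma \ref{prod} gives $\theta_\pi=\mu_i$ and $\theta_{\pi'}=\nu_j$; since by construction $\mu_i$ is a constituent of $(\chi_\pi)_N$ and $\nu_j$ is a constituent of $(\chi_{\pi'})_N$, we conclude that $\theta_\pi$ lies under $\chi_\pi$ and $\theta_{\pi'}$ lies under $\chi_{\pi'}$, which is the last assertion.

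There is no serious obstacle in this argument; the only point requiring a little care is the bookkeeping that guarantees $\theta$ is a constituent of an honest \emph{product} $\mu_i\nu_j$ (rather than merely of a sum of such products), after which the irreducibility of $\pi$-special times $\pi'$-special makes everything immediate.
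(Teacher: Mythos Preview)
Your proof is correct and is exactly the argument the paper has in mind: the paper does not write out a proof at all, merely remarking that since every normal constituent of a $\pi$-special character is again $\pi$-special, the lemma follows; your expansion of $\chi_N=(\chi_\pi)_N(\chi_{\pi'})_N$ together with Lemma~\ref{prod} is precisely how one unpacks that remark.
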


Recall that if $\theta\in\Irr(H)$, where $H$ is a subgroup of $G$,
we write $\Irr(G|\theta)$ to denote the set of irreducible constituents of $\theta^G$,
that is, the set of irreducible characters of $G$ lying over $\theta$.

\begin{lem}\label{CL3.2}
Let $N\NM G$, where $G$ is $\pi$-separable, and suppose that $G/N$ is a $\pi'$-group.
Let $\alpha,\beta\in\Irr(N)$ be $\pi$-special and $\pi'$-special, respectively.
Then $\alpha$ is invariant in $G$ if and only if every member of $\Irr(G|\alpha\beta)$ is $\pi$-factored, and this happens precisely when one member of $\Irr(G|\alpha\beta)$ is $\pi$-factored.
\end{lem}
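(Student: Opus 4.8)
The plan is to prove the two equivalences via the cyclic chain $(\mathrm{i})\Rightarrow(\mathrm{ii})\Rightarrow(\mathrm{iii})\Rightarrow(\mathrm{i})$, where (i), (ii), (iii) denote, respectively, the conditions that $\alpha$ is invariant in $G$, that every member of $\Irr(G|\alpha\beta)$ is $\pi$-factored, and that some member of $\Irr(G|\alpha\beta)$ is $\pi$-factored; note that $(\mathrm{ii})\Rightarrow(\mathrm{iii})$ is immediate since $\Irr(G|\alpha\beta)\neq\emptyset$. I would first record a preliminary observation used throughout: by Lemma \ref{prod}, $\alpha\beta\in\Irr(N)$, and since a $G$-conjugate of a $\pi$-special (resp.\ $\pi'$-special) character is again $\pi$-special (resp.\ $\pi'$-special), $(\alpha\beta)^g=\alpha^g\beta^g$ is the $\pi$-factorization of $(\alpha\beta)^g$ for every $g\in G$. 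Hence the uniqueness clause of Lemma \ref{prod} yields $I_G(\alpha\beta)=I_G(\alpha)\cap I_G(\beta)$, and since $N\le I_G(\alpha)\le G$ the index $[G:I_G(\alpha)]$ divides $[G:N]$ and is thus a $\pi'$-number.

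For $(\mathrm{iii})\Rightarrow(\mathrm{i})$, suppose $\chi\in\Irr(G|\alpha\beta)$ is $\pi$-factored, say $\chi=\chi_\pi\chi_{\pi'}$. Applying Lemma \ref{pi-pi} to the constituent $\alpha\beta$ of $\chi_N$ shows that $\alpha=(\alpha\beta)_\pi$ lies under $\chi_\pi$, so by Clifford's theorem the irreducible constituents of $(\chi_\pi)_N$ are exactly the $G$-conjugates of $\alpha$ and $\chi_\pi(1)=e\,[G:I_G(\alpha)]\,\alpha(1)$ for some positive integer $e$. Since $\chi_\pi(1)$ is a $\pi$-number while $[G:I_G(\alpha)]$ is a $\pi'$-number dividing it, we get $[G:I_G(\alpha)]=1$, i.e.\ $\alpha$ is invariant in $G$.

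The substantial implication is $(\mathrm{i})\Rightarrow(\mathrm{ii})$; assume $\alpha$ is invariant in $G$. The key input is Isaacs' extension theorem for $\pi$-special characters (see \cite{I2018}): as $\alpha$ is $\pi$-special and $G$-invariant and $G$ is $\pi$-separable, $\alpha$ extends to a $\pi$-special character $\hat\alpha\in\Irr(G)$. I also use the auxiliary fact that, because $G/N$ is a $\pi'$-group and $\beta$ is $\pi'$-special, every $\eta\in\Irr(G|\beta)$ is $\pi'$-special; this follows by a direct bookkeeping of determinantal orders, since the constituents of $\eta_N$ are $G$-conjugates of $\beta$ (hence $\pi'$-special), $\eta(1)$ is a $\pi'$-number, and for $S\SN G$ any irreducible constituent $\theta$ of $\eta_S$ lies over a $\pi'$-special character of $S\cap N\SN N$, so $\det\theta$ restricts to $S\cap N$ with $\pi'$-order while $S/(S\cap N)$ embeds in the $\pi'$-group $G/N$, forcing $o(\theta)$ to be a $\pi'$-number. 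Granting these, I claim the map $\eta\mapsto\hat\alpha\eta$ is a bijection from $\Irr(G|\beta)$ onto $\Irr(G|\alpha\beta)$. It is well defined: for $\eta\in\Irr(G|\beta)$ the character $\eta$ is $\pi'$-special, hence $\hat\alpha\eta\in\Irr(G)$ by Lemma \ref{prod}, and $(\hat\alpha\eta)_N=\alpha\,\eta_N$ has $\alpha\beta$ among its constituents. It is injective by the uniqueness part of Lemma \ref{prod}. For surjectivity, given $\chi\in\Irr(G|\alpha\beta)$, the restriction $(\overline{\hat\alpha}\,\chi)_N$ has $\beta$ among its constituents (because $\overline{\alpha}\alpha\beta$ does), so $\overline{\hat\alpha}\,\chi$ has an irreducible constituent $\eta$ lying over $\beta$; then $\eta$ is $\pi'$-special, $\hat\alpha\eta\in\Irr(G)$, and $[\hat\alpha\eta,\chi]=[\eta,\overline{\hat\alpha}\,\chi]\neq0$ forces $\hat\alpha\eta=\chi$. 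Thus every $\chi\in\Irr(G|\alpha\beta)$ has the form $\hat\alpha\eta$ with $\hat\alpha$ $\pi$-special and $\eta$ $\pi'$-special, and is therefore $\pi$-factored.

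The main obstacle is $(\mathrm{i})\Rightarrow(\mathrm{ii})$, and within it the reliance on the $\pi$-special extension theorem of Isaacs; once a $\pi$-special extension $\hat\alpha$ of $\alpha$ is in hand, the product $\hat\alpha\eta$ together with the fact that members of $\Irr(G|\beta)$ are $\pi'$-special reduces the matter to Lemma \ref{prod}. I also expect that verifying the auxiliary fact will require some care, since one must use that $G/N$ (not merely $[G:N]$) is a $\pi'$-group and keep track of which subnormal subgroups of $G$ are subnormal in $N$.
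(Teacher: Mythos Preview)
Your argument is correct and follows the same cyclic structure as the paper's proof. For $(\mathrm{iii})\Rightarrow(\mathrm{i})$ the two proofs are essentially identical: both use Lemma~\ref{pi-pi} to get $\alpha$ under $\chi_\pi$ and then Clifford's theorem together with the fact that $\chi_\pi$ has $\pi$-degree while $|G:N|$ is a $\pi'$-number. For $(\mathrm{i})\Rightarrow(\mathrm{ii})$ the paper simply invokes Lemma~4.2 of \cite{I2018} as a black box, whereas you unpack that result by producing the $\pi$-special extension $\hat\alpha$ and exhibiting the explicit bijection $\eta\mapsto\hat\alpha\eta$ from $\Irr(G|\beta)$ onto $\Irr(G|\alpha\beta)$; your route is more self-contained but relies on exactly the same two ingredients (the $\pi$-special extension theorem and the fact that $\Irr(G|\beta)\subseteq\Y(G)$ when $G/N$ is a $\pi'$-group) that underlie Lemma~4.2.

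One small point worth tightening: in your sketch of the auxiliary fact, the assertion that $\eta(1)$ is a $\pi'$-number does not follow merely from the constituents of $\eta_N$ being $\pi'$-special, since the Clifford multiplicity $e$ is not controlled by that observation alone. The statement is of course true---it is part of the standard ``$\pi$-special goes up through $\pi$-index'' result in \cite{I2018}---but your bookkeeping paragraph handles only the determinantal orders cleanly, and you should either cite the result or reduce (via Clifford correspondence) to the invariant case where the extension theorem gives it.
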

\begin{proof}
If $\alpha$ is invariant in $G$, then by Lemma 4.2 of \cite{I2018} every member of $\Irr(G|\alpha\beta)$ is $\pi$-factored.
Assume some character $\chi\in\Irr(G|\alpha\beta)$ is $\pi$-factored.
By Lemma \ref{pi-pi}, we see that $\chi_\pi$ lies over $\alpha$,
and since $G/N$ is a $\pi'$-group and $\chi_\pi$ has $\pi$-degree,
we have $(\chi_\pi)_N=\alpha$ by Clifford's theorem,
and thus $\alpha$ is invariant in $G$. The result now follows.
\end{proof}

\medskip
\noindent{\bf 2.3 \ Vertex pairs}

We continue to assume that $G$ is $\pi$-separable and $\chi\in\Irr(G)$.
Let $Q$ be a $\pi'$-subgroup of $G$ and $\delta\in\Irr(Q)$.
Following Cossey \cite{C2010}, the pair $(Q,\delta)$ is called a {\bf vertex pair} for $\chi$ if there exists a subgroup $U$ of $G$ such that $Q$ is a Hall $\pi'$-subgroup of $U$
and $\delta=(\psi_{\pi'})_Q$, where $\psi\in\Irr(U)$ is $\pi$-factored with $\psi^G=\chi$.
(Recall that we use $\psi_{\pi'}$ to denote the $\pi'$-special factor of $\psi$.)
We say that $(Q,\delta)$ is a {\bf linear vertex} if $\delta\in\Lin(Q)$ is a linear character of $Q$.
Clearly, all of the vertex pairs for $\chi$ need not be conjugate in $G$.

The importance of linear vertex pairs is illustrated by the following result.

\begin{lem}[Theorem 4.6(d) of \cite{C2010}]\label{vertex}
Let $G$ be a $\pi$-separable group, and let $\chi\in\Irr(G)$ be a lift of some $\varphi\in\I(G)$.
If $(Q,\delta)$ is a linear vertex pair for $\chi$, then $Q$ is a vertex for $\varphi$.
\end{lem}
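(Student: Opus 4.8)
The plan is to unwind the definition of a linear vertex pair and reduce everything to the defining property of a vertex for the $\I$-character $\varphi=\chi^0$ recalled in Subsection 2.1. By hypothesis there is a subgroup $U\le G$ such that $Q$ is a Hall $\pi'$-subgroup of $U$ and $\delta=(\psi_{\pi'})_Q$ for some $\pi$-factored $\psi\in\Irr(U)$ with $\psi^G=\chi$; write $\psi=\psi_\pi\psi_{\pi'}$ with $\psi_\pi$ $\pi$-special and $\psi_{\pi'}$ $\pi'$-special.

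First I would observe that restriction does not change degree, so $\psi_{\pi'}(1)=\delta(1)=1$, whence $\psi_{\pi'}\in\Lin(U)$ and, being $\pi'$-special, has $\pi'$-order. Consequently $\psi_{\pi'}$ takes the value $1$ on every $\pi$-element of $U$, since for a $\pi$-element $x$ the scalar $\psi_{\pi'}(x)$ is a root of unity whose order divides both $o(x)$ (a $\pi$-number) and $o(\psi_{\pi'})$ (a $\pi'$-number). Thus $\psi_{\pi'}^0$ is the trivial $\pi$-partial character of $U$, so $\psi^0=(\psi_\pi)^0(\psi_{\pi'})^0=(\psi_\pi)^0$; by Lemma \ref{lift} this lies in $\I(U)$, and moreover $(\psi_\pi)^0(1)=\psi_\pi(1)=\psi(1)$ is a $\pi$-number.

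Next I would invoke the compatibility of ordinary induction with restriction to $\pi$-elements: since $\psi^G=\chi$, one has $\varphi=\chi^0=(\psi^G)^0=(\psi^0)^G=\big((\psi_\pi)^0\big)^G$, where on the right we use induction of class functions on $\pi$-elements. Setting $\theta=(\psi_\pi)^0\in\I(U)$, we therefore have $\theta^G=\varphi$, $\theta(1)$ a $\pi$-number, and $Q$ a Hall $\pi'$-subgroup of $U$; by the definition of a vertex for $\varphi$, this exactly says that $Q$ is a vertex for $\varphi$.

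The only point requiring a little care is the identity $(\psi^G)^0=(\psi^0)^G$, i.e.\ that restricting an induced ordinary character to the $\pi$-elements agrees with inducing the restricted class function; this is a standard fact in Isaacs' $\pi$-theory (it underlies the very definition of induction of $\pi$-partial characters), so it will be cited rather than reproved. Everything else is direct bookkeeping with the $\pi$-factorization $\psi=\psi_\pi\psi_{\pi'}$ and the definitions, and I do not anticipate a genuine obstacle.
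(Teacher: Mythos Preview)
Your argument is correct. The paper does not supply its own proof of this lemma; it merely cites Theorem~4.6(d) of \cite{C2010}. Your direct verification---observing that $\psi_{\pi'}$ is a linear $\pi'$-special character and hence trivial on $\pi$-elements, so that $\psi^0=(\psi_\pi)^0\in\I(U)$ has $\pi$-degree and induces $\varphi$---is exactly the natural proof and matches how the identity $(\psi^G)^0=(\psi^0)^G$ is used freely elsewhere in the paper (for instance in the proof of Lemma~\ref{lin-ver}).
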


\medskip
\noindent{\bf 2.4 \ Navarro vertices}

The definition of Navarro vertices relies on the following fundamental result,
whose proof can be seen Theorem 2.2 and Corollary 2.4 of \cite{N2002}.
Recall that if $\theta\in\Irr(N)$, where $N\NM G$ and $G$ is a group,
we often use $G_\theta$ to denote the inertia group of $\theta$ in $G$
and write $\chi_\theta$ for the Clifford correspondent of any $\chi\in\Irr(G|\theta)$
with respect to $\theta$.

\begin{lem}\label{N-nuc}
Let $G$ be a $\pi$-separable group, and let $\chi\in\Irr(G)$.
Then there is a unique normal subgroup $N$ of $G$ maximal with the property that
every irreducible constituent $\theta$ of $\chi_N$ is $\pi$-factored.
Furthermore, if $\theta$ is invariant in $G$, that is, if $G_\theta=G$, then $N=G$ and $\theta=\chi$.
\end{lem}

Now, the {\bf normal nucleus} $(W,\gamma)$ for $\chi\in\Irr(G)$ can be defined inductively.
If $\chi$ is $\pi$-factored, then we let $(W,\gamma)=(G,\chi)$.
Otherwise, let $N$ and $\theta$ be as in Lemma \ref{N-nuc}.
Then $G_\theta<G$, and we define $(W,\gamma)$ to be the normal nucleus for $\chi_\theta$. It is easy to see that $\gamma$ is $\pi$-factored with $\gamma^G=\chi$,
and that the normal nucleus for $\chi$ is uniquely determined up to conjugacy.

Furthermore, let $Q$ be a Hall $\pi'$-subgroup of $W$ and let $\delta=(\gamma_{\pi'})_Q$.
Then the pair $(Q,\delta)$ is clearly a vertex pair for $\chi$,
which is called a {\bf Navarro vertex} for $\chi$.
By the construction of normal nuclei, it is easy to see that
all of the Navarro vertices for $\chi$ are conjugate in $G$.

\medskip
\noindent{\bf 2.5 \ $\pi$-Induction}

Assume that $G$ is a $\pi$-separable group with $2\notin\pi$. For each subgroup $H$ of $G$,
Isaacs defined the $\pi$-standard sign character $\delta_{(G,H)}$,
which is a linear character of $H$  that has values $\pm 1$.
For definition and properties of this sign character,
the reader is referred to \cite{I1986}.

\begin{lem}[Theorem 2.5 of \cite{I1986}]\label{1}
Let $G$ be a $\pi$-separable group, where $2\notin\pi$.

{\rm (1)} If $H\le K\le G$, then $\delta_{(G,H)}=(\delta_{(G,K)})_H\delta_{(K,H)}$.

{\rm (2)} If $N\le H\le G$ and $N\NM G$,
then $N$ is contained in the kernel of $\delta_{(G,H)}$.
\end{lem}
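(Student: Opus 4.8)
The plan is to work from Isaacs' explicit construction of the $\pi$-standard sign character and to prove both parts simultaneously by induction on $|G|$, using a minimal normal subgroup $N$ of $G$ as the inductive handle. I recall that $\delta_{(G,H)}$ is a $\{\pm1\}$-valued linear character of $H$ recording a suitably $\pi$-corrected parity of the action of $H$ on the coset space of $H$ in $G$, so that it is insensitive to the $\pi$-part of everything in sight and, in particular, is trivial whenever the index $|G:H|$ is odd (equivalently, whenever that index is a $\pi$-number, since $2\notin\pi$); the odd-index case provides the base of the induction. I shall also use without comment that the construction is compatible with passage to a quotient of $G$ by a normal subgroup contained in $H$.

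I would dispose of (2) first, since it is needed inside the proof of (1). If $N_0\trianglelefteq G$ with $N_0\le H$, then $N_0$ acts trivially on the coset space of $H$ in $G$: for $n\in N_0$ and $g\in G$ one has $Hgn=H\,(gng^{-1})\,g=Hg$ because $gng^{-1}\in N_0\le H$. Hence $N_0$ acts trivially on the $\pi'$-section data underlying $\delta_{(G,H)}$, so $N_0\le\ker\delta_{(G,H)}$, and $\delta_{(G,H)}$ is in fact inflated from the configuration $H/N_0\le G/N_0$. For (1), write the configuration as $H\le K\le G$, discard the trivial cases $K=H$ and $K=G$, and note that by composing a maximal chain from $K$ to $G$ it suffices to treat the case in which $K$ is a maximal subgroup of $G$. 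Let $N$ be minimal normal in $G$, necessarily an elementary abelian $q$-group with $q\in\pi$ or $q\in\pi'$. If $N\le H$, then by (2) each of $\delta_{(G,H)}$, $\delta_{(G,K)}$, $\delta_{(K,H)}$ is inflated from the quotient by $N$, and the identity follows by induction from the corresponding one for $H/N\le K/N\le G/N$. The remaining case is $N\not\le H$, and here — after a reduction using transitivity inside $K$, part (2), and passage to $G/N$ — one is left with a direct computation for a single chief factor $N$ with $N\not\le K$ (equivalently, with $K$ maximal, $\Core_G(K)=1$).

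The main obstacle is precisely this last computation: a direct parity calculation for a single $\pi'$-chief factor $N$ with $N\not\le K$, carried out by a Mackey-type decomposition of the coset space of $H$ in $G$ along $N$. When $q\in\pi$ the chief factor is suppressed and contributes nothing; the delicate case is $q\in\pi'$ (including $q=2$), where $N$ genuinely enters the sign and one must check that the parities produced by the fibration of coset spaces recombine into exactly $(\delta_{(G,K)})_H\cdot\delta_{(K,H)}$ and not into that product twisted by some extra factor. The crux is that the sign of a fibred permutation naively involves the base sign raised to the power of the fibre size together with conjugated contributions from the fixed fibres; it is the hypothesis $2\notin\pi$ — which forces the fibre sizes coming from $\pi$-sections to be odd and the $\pi$-section signs to be trivial — that makes these extra terms collapse. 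Once this elementary but fiddly bookkeeping is completed for one $\pi'$-chief factor, the induction closes and both (1) and (2) follow.
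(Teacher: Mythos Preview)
The paper does not prove this lemma; it is quoted as Theorem~2.5 of \cite{I1986} and used as a black box, with no argument supplied. There is therefore nothing in the paper to compare your attempt against.

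Your outline is a plausible route to an independent proof, but as you yourself concede, the substantive content --- the parity bookkeeping for a single $\pi'$-chief factor $N$ with $N\nsubseteq K$ --- is not actually carried out, and that is precisely where the hypothesis $2\notin\pi$ does real work. One further caution: your argument for (2) assumes that $\delta_{(G,H)}$ is read off directly from the permutation action of $H$ on the coset space $H\backslash G$, so that triviality of that permutation on $N_0$ forces the sign to be $+1$. Isaacs' actual construction in \cite{I1986} is built from a fixed chief series of $G$ rather than from the raw coset action, so this step must be checked against the genuine definition rather than the heuristic description; once you unwind his construction, both (1) and (2) fall out quite quickly, with (1) essentially encoded in the proof that $\delta_{(G,H)}$ is independent of the chosen chief series. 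Your inductive scaffolding (reduce to $K$ maximal, peel off a minimal normal $N$, pass to $G/N$ when $N\le H$) is sound, but the proof only closes once you work from Isaacs' definition and complete the deferred base-case computation.
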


Using the $\pi$-standard sign character, Isaacs \cite{I1986} introduced the notion of $\pi$-induction.

\begin{defn}
Let $G$ be $\pi$-separable with $2\notin\pi$,
and suppose that $\theta$ is a character of some subgroup $H$ of $G$.
Write $\theta^{\pi G}=(\delta_{(G,H)}\theta)^G$.
We say that $\theta^{\pi G}$ is the {\bf $\pi$-induction} of $\theta$ to $G$.
\end{defn}

\begin{lem}[Lemma 7.4 of \cite{I1986}]\label{3}
Let $G$ be $\pi$-separable with $2\notin\pi$, and let $\chi\in\Irr(G)$.
Suppose that $H\leq K\leq G$ and $\theta$ is a character of $H$.
Then $(\theta^{\pi K})^{\pi G}=\theta^{\pi G}$.
\end{lem}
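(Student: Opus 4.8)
The plan is to reduce the transitivity of $\pi$-induction to the transitivity of ordinary induction together with the cocycle identity for the $\pi$-standard sign characters recorded in Lemma~\ref{1}(1). First I would simply unwind the definitions: by construction $\theta^{\pi G}=(\delta_{(G,H)}\theta)^G$, while $\theta^{\pi K}=(\delta_{(K,H)}\theta)^K$, and hence
$(\theta^{\pi K})^{\pi G}=\bigl(\delta_{(G,K)}\cdot(\delta_{(K,H)}\theta)^K\bigr)^G$, where $\delta_{(G,K)}$ is regarded as a linear character of $K$.

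The key step is to pull $\delta_{(G,K)}$ inside the induction from $H$ to $K$ via the projection formula (tensor identity) for characters: for a character $\lambda$ of $K$ and any character $\psi$ of $H\le K$ one has $\lambda\cdot\psi^K=\bigl((\lambda)_H\psi\bigr)^K$. Applying this with $\lambda=\delta_{(G,K)}$ and $\psi=\delta_{(K,H)}\theta$ yields $\delta_{(G,K)}\cdot(\delta_{(K,H)}\theta)^K=\bigl((\delta_{(G,K)})_H\,\delta_{(K,H)}\,\theta\bigr)^K$. Now Lemma~\ref{1}(1) says precisely that $(\delta_{(G,K)})_H\,\delta_{(K,H)}=\delta_{(G,H)}$, so this collapses to $(\delta_{(G,H)}\theta)^K$. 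Finally, transitivity of ordinary induction gives
$(\theta^{\pi K})^{\pi G}=\bigl((\delta_{(G,H)}\theta)^K\bigr)^G=(\delta_{(G,H)}\theta)^G=\theta^{\pi G}$, as desired. (The hypothesis $\chi\in\Irr(G)$ is not used; the identity holds for an arbitrary character $\theta$ of $H$, since every operation invoked is additive in $\theta$.)

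There is no serious obstacle here; the one point requiring a little care is the bookkeeping of which group each sign character lives on and applying the projection formula in the correct direction — the identity $\lambda\cdot\psi^K=((\lambda)_H\psi)^K$ requires $\lambda$ to be a character of the \emph{larger} group $K$, which is exactly the situation since $\delta_{(G,K)}$ is by definition a (sign) character of $K$. It is also worth recording that $\delta_{(G,K)}$ takes values $\pm 1$, so restricting it to $H$ and multiplying by $\delta_{(K,H)}\theta$ causes no difficulty.
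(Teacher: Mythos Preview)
Your argument is correct. The paper itself does not supply a proof of this lemma; it is merely quoted from \cite{I1986} (Lemma~7.4 there), so there is nothing to compare against. For the record, your route---unwinding the definition, pulling $\delta_{(G,K)}$ inside the induction via the projection formula $\lambda\cdot\psi^K=((\lambda)_H\psi)^K$, collapsing the product of sign characters with Lemma~\ref{1}(1), and then invoking transitivity of ordinary induction---is exactly the standard proof, and your remark that the hypothesis $\chi\in\Irr(G)$ plays no role is also correct.
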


The next result is the Clifford correspondence for $\pi$-induction.

\begin{lem}[Lemma 7.5 of \cite{I1986}]\label{4}
Let $G$ be $\pi$-separable with $2\notin\pi$,
and let $N\NM G$ and $\theta\in\Irr(N)$.
Then the map $\psi\mapsto\psi^{\pi G}$ defines a bijection
$\Irr(G_\theta|\theta)\to\Irr(G|\theta)$.
\end{lem}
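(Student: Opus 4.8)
The plan is to reduce the statement to the ordinary Clifford correspondence by absorbing the $\pi$-standard sign character into the character being induced. Set $H=G_\theta$ and write $\delta=\delta_{(G,H)}$ for brevity, so that by definition $\psi^{\pi G}=(\delta\psi)^G$ for every character $\psi$ of $H$. The first step is to note that, since $N\NM G$ and $N\le H\le G$, Lemma~\ref{1}(2) yields $N\le\Ker\delta$; equivalently, $\delta_N$ is the principal character of $N$.

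Next I would verify that multiplication by $\delta$ permutes $\Irr(H|\theta)$. If $\psi\in\Irr(H|\theta)$, then $\delta\psi$ is again irreducible, being the product of a linear character with an irreducible character; and since $\delta_N$ is principal while $\theta$ is $H$-invariant, we get $(\delta\psi)_N=\psi_N=e\theta$ for some positive integer $e$, so $\delta\psi\in\Irr(H|\theta)$. Because $\delta$ takes only the values $\pm 1$, the character $\delta^2$ is principal, so $\psi\mapsto\delta\psi$ is its own inverse and hence a bijection of $\Irr(H|\theta)$ onto itself.

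Finally, I would invoke the classical Clifford correspondence (Theorem~6.11 of \cite{I1976}), which asserts that ordinary induction $\xi\mapsto\xi^{G}$ is a bijection of $\Irr(G_\theta|\theta)$ onto $\Irr(G|\theta)$. Composing this with the bijection of the previous step gives
$$\psi\ \longmapsto\ \delta\psi\ \longmapsto\ (\delta\psi)^{G}=\psi^{\pi G},$$
which is therefore a bijection $\Irr(G_\theta|\theta)\to\Irr(G|\theta)$, as required.

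The only delicate point is the first step: one must know that the sign character $\delta_{(G,G_\theta)}$ is trivial on $N$, since this is precisely what ensures that twisting by $\delta$ does not disturb which irreducible constituents lie over $\theta$ (both before and after inducing). Once Lemma~\ref{1}(2) is in hand, the remainder is formal, reducing the claim to the ordinary Clifford correspondence together with the observation that $\delta$ is an involution in $\Lin(H)$.
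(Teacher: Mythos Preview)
Your proof is correct. Note, however, that the paper does not give its own proof of this lemma: it is quoted directly from \cite{I1986} (as Lemma~7.5 there) and used as a black box. Your argument---trivializing $\delta_{(G,G_\theta)}$ on $N$ via Lemma~\ref{1}(2), then reducing to the ordinary Clifford correspondence by twisting by the involution $\psi\mapsto\delta_{(G,G_\theta)}\psi$ on $\Irr(G_\theta|\theta)$---is exactly the standard proof, so there is nothing to contrast.
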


The following is fundamental when studying induction of $\pi$-special characters.

\begin{lem}[Theorem 2.29 of \cite{I2018}]\label{spec-ind}
Let $\psi\in\Irr(U)$, where $U\le G$ and $G$ is $\pi$-separable,
and assume that every irreducible constituent of $\psi^G$ is $\pi$-special.
Then $|G:U|$ is a $\pi$-number and $\psi(1)$ is a $\pi$-number.
Also, $\psi$ is $\pi$-special if $2\in\pi$, and $\delta_{(G,U)}\psi$ is $\pi$-special if $2\notin\pi$.
\end{lem}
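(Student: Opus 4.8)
The plan is to prove the statement by induction on the pair $(|G|,|G:U|)$ in lexicographic order. The base case $U=G$ is immediate, since then $\delta_{(G,G)}=1_G$ and $\psi=\psi^G$ is $\pi$-special by hypothesis. So suppose $G>1$, fix an irreducible constituent $\chi$ of $\psi^G$ --- which is $\pi$-special by hypothesis, and under which $\psi$ lies --- and choose a minimal normal subgroup $N$ of $G$. Since $G$ is $\pi$-separable, $N$ is a $\pi'$-group or a $\pi$-group, and I would treat these two cases separately. The recurring tools are: every irreducible constituent of the restriction of a $\pi$-special character to a normal subgroup is again $\pi$-special, and dually $\pi$-specialness descends along quotients by a normal subgroup contained in the kernel; Clifford's theorem and the Clifford correspondence; and the multiplicativity and vanishing properties of the $\pi$-standard sign character from Lemma \ref{1}.

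If $N$ is a $\pi'$-group, then any $\pi$-special character of $N$ is trivial: its degree divides $|N|$ and is a $\pi$-number, hence is $1$, and being linear its determinantal order is simultaneously a $\pi$- and a $\pi'$-number, hence is $1$. Thus $N\le\ker\chi_i$ for every irreducible constituent $\chi_i$ of $\psi^G$, so $(\psi^G)_N$ is a multiple of $1_N$; comparing degrees in the Mackey decomposition of $(\psi^G)_N$ then forces $N\le U$, and hence also $N\le\ker\psi$. Passing to $\overline G=G/N$, the character $\psi^G$ is inflated from $\overline\psi^{\,\overline G}$, each of whose constituents is $\pi$-special in $\overline G$; since $|\overline G|<|G|$, the inductive hypothesis applies, and its conclusions transfer back to $(G,U,\psi)$ once one identifies $\delta_{(G,U)}$ with the inflation of $\delta_{(\overline G,\overline U)}$ --- using that $N\le\ker\delta_{(G,U)}$ by Lemma \ref{1}(2) and the compatibility of the $\pi$-standard sign with quotients.

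If $N$ is a $\pi$-group, there are two sub-cases. If $N\not\le U$, then $|G:NU|<|G:U|$; picking an irreducible constituent $\xi$ of $\psi^{NU}$ lying over $\psi$, every constituent of $\xi^G$ is a constituent of $\psi^G$ and hence $\pi$-special, so the inductive hypothesis applies to $(G,NU,\xi)$; one then descends from $NU$ to $U$, noting that $|NU:U|$ divides $|N|$ and tracking the sign twist via $(\delta_{(G,NU)})_U=\delta_{(G,U)}\,\delta_{(NU,U)}$ from Lemma \ref{1}(1). If $N\le U$, pick $\theta\in\Irr(N)$ under $\psi$; then $\theta$ lies under $\chi$, hence is $\pi$-special. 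When $\theta$ is not $G$-invariant, $G_\theta<G$, and by the Clifford correspondence $\psi=\psi_0^{\,U}$ with $\psi_0\in\Irr(U\cap G_\theta|\theta)$ and $\psi^G=(\psi_0^{\,G_\theta})^G$, so one runs the induction inside $G_\theta$ and transports the conclusion back, once more adjusting by Lemma \ref{1}(1). When $\theta$ is $G$-invariant, one reduces via a character triple isomorphism that replaces $(G,N,\theta)$ by a triple with $N$ central and carries the $\pi$-special data along, finishing again by induction on $|G|$. In all cases the assertions that $|G:U|$ and $\psi(1)$ are $\pi$-numbers come along for free, since the inductive hypothesis supplies them and they are multiplicative along the subgroup chains and quotients used; and the final clause splits according to whether $2\in\pi$, where all sign characters are trivial and the conclusion is simply that $\psi$ is $\pi$-special, or $2\notin\pi$, where the sign twist must be kept.

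I expect the main obstacle to be the bookkeeping of the $\pi$-standard sign character $\delta_{(G,U)}$ when $2\notin\pi$: each reduction introduces, via Lemma \ref{1}(1), an auxiliary sign attached to an intermediate subgroup, a quotient, or an inertia subgroup, and one must verify that these auxiliary signs assemble so that exactly $\delta_{(G,U)}\psi$ --- and no other sign twist of $\psi$ --- emerges as $\pi$-special. A secondary difficulty is ensuring that the $\pi$-special property is correctly preserved when one passes into the inertia group $G_\theta$ in the $\pi$-group case, or through the character triple isomorphism in the $G$-invariant sub-case, which needs the compatibility of the $\pi$-special theory with these Clifford-theoretic constructions.
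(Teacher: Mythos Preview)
The paper gives no proof of this lemma; it is quoted as Theorem~2.29 of \cite{I2018} without argument, so there is no in-paper proof to compare your attempt against.

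Your inductive outline (minimal normal subgroup, $\pi$/$\pi'$ split, Clifford reductions) is the standard one and is essentially Isaacs' own. The one genuine gap is sub-case~2a. After applying the induction to $(G,NU,\xi)$ you know only that $\delta_{(G,NU)}\xi$ is $\pi$-special, not that $\xi$ itself is, so you cannot directly feed $\psi$ into an inner induction over $(NU,U)$: that step would require every constituent of $\psi^{NU}$ to be $\pi$-special in $NU$, which you have not established. Moreover, when $NU=G$ neither coordinate of your lexicographic induction decreases, so the phrase ``descend from $NU$ to $U$'' has no content there. The repair is to run the inner induction on $(\delta_{(G,NU)})_U\psi$ rather than on $\psi$ --- since $((\delta_{(G,NU)})_U\psi)^{NU}=\delta_{(G,NU)}\psi^{NU}$ has all constituents $\pi$-special --- so that Lemma~\ref{1}(1) reassembles the two signs into $\delta_{(G,U)}$; and to absorb the boundary case $NU=G$ into the Clifford-theoretic branch over a constituent $\theta$ of $\chi_N$ rather than treating it via the $NU$ reduction. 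You are right that the sign bookkeeping and, in the $G$-invariant sub-case, the compatibility of the character-triple reduction with $\pi$-speciality are the real technical points of the argument.
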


\section{Proof of Theorem A}
We begin with a lemma, which is of fundamental importance for the proof of our Theorem A.

\begin{lem}\label{lin-ver}
Let $\chi\in\Irr(G)$ be a lift of $\varphi\in\I(G)$,
where $G$ is $\pi$-separable with $2\notin\pi$.
Then the following are equivalent.

{\rm (a)} All of the vertex pairs for $\chi$ are linear.

{\rm (b)} Every quasi-primitive character $\psi$ inducing $\chi$ has odd degree,
where both $\psi_\pi$ and $\psi_{\pi'}$ are primitive.

{\rm (c)} Every $\pi$-factored character $\psi\in\Irr(U)$ inducing $\chi$
has odd degree, where $U$ is a subgroup of $G$ containing $O_{\pi}(G)O_{\pi'}(G)$ and $\psi_{\pi'}$ is primitive.
\end{lem}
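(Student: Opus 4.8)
The plan is to prove the cycle of implications $(a)\Rightarrow(c)\Rightarrow(b)\Rightarrow(a)$, exploiting the transitivity of $\pi$-induction (Lemma \ref{3}) to move freely between inducing subgroups, and Lemma \ref{spec-ind} to control the $\pi$-factored pieces along the way. The guiding principle is that a vertex pair $(Q,\delta)$ with $\delta=(\psi_{\pi'})_Q$ is linear exactly when $\psi_{\pi'}$ restricts irreducibly to a Hall $\pi'$-subgroup $Q$ of $U$ as a linear character, and since $Q$ is a $\pi'$-group with $2\notin\pi$, hence of odd order, a linear character of $Q$ has odd (indeed $1$) degree; conversely, degree considerations should force the relevant $\pi'$-factor to be linear.

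First I would prove $(a)\Rightarrow(c)$. Suppose all vertex pairs for $\chi$ are linear, and let $\psi\in\Irr(U)$ be $\pi$-factored with $O_\pi(G)O_{\pi'}(G)\le U$ and $\psi_{\pi'}$ primitive, with $\psi^G=\chi$ (I will first reduce ordinary induction to $\pi$-induction by replacing $\psi$ with $\delta_{(G,U)}\psi$, which is again $\pi$-factored with the same $\pi'$-factor up to the sign twist, using Lemma \ref{1} to track the sign). Pick a Hall $\pi'$-subgroup $Q$ of $U$; then $(Q,(\psi_{\pi'})_Q)$ is a vertex pair for $\chi$, hence linear by hypothesis, so $(\psi_{\pi'})_Q$ is linear. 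Since $\psi_{\pi'}$ is $\pi'$-special it has $\pi'$-degree, and $Q$ is a Hall $\pi'$-subgroup of $U$; restricting $\psi_{\pi'}$ to $Q$ and comparing degrees forces $\psi_{\pi'}(1)=1$. As $\psi$ is $\pi$-factored, $\psi(1)=\psi_\pi(1)\psi_{\pi'}(1)=\psi_\pi(1)$ is a $\pi$-number, hence odd since $2\notin\pi$. This gives (c).

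Next, $(c)\Rightarrow(b)$ should be a descent argument: given a quasi-primitive $\psi\in\Irr(V)$ inducing $\chi$ with both $\psi_\pi$ and $\psi_{\pi'}$ primitive, I would replace $V$ by a subgroup $U\supseteq O_\pi(G)O_{\pi'}(G)$ through which $\psi$ ``factors'' compatibly — concretely, using that $\psi^{\pi V}$ and then $\psi^{\pi G}$ agree (Lemma \ref{3}), and that primitivity of the $\pi'$-factor is inherited appropriately — so that the hypothesis of (c) applies and yields that $\psi$ has odd degree. (If $V$ already contains $O_\pi(G)O_{\pi'}(G)$ this is immediate; otherwise one induces up the character of $V\langle O_\pi(G)O_{\pi'}(G)\rangle$ lying over $\psi$, noting a quasi-primitive character's constituents behave well.) The implication $(b)\Rightarrow(a)$ is then a building-up argument: start from a vertex pair $(Q,\delta)$ coming from a $\pi$-factored $\psi\in\Irr(U)$ with $\psi^G=\chi$, repeatedly pass to a $\pi$-factored character inducing $\psi$ on a larger subgroup — refining $U$ downward if necessary so that $\psi_{\pi'}$ becomes primitive and $\psi$ becomes quasi-primitive, which is possible because $\psi_{\pi'}$ is $\pi'$-special and any character inducing it can be chosen $\pi$-factored via Lemma \ref{CL3.2} when the relevant normal $\pi$-special factor is invariant — until we reach a quasi-primitive $\psi^*$ with primitive factors inducing $\chi$; by (b) it has odd degree, so $\psi^*_{\pi'}(1)$ is odd and $\pi'$, hence $=1$, and chasing back down shows $\delta=(\psi_{\pi'})_Q$ is linear.

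The main obstacle I expect is the $(b)\Rightarrow(a)$ direction, specifically producing, from an \emph{arbitrary} vertex pair's witnessing $\pi$-factored character $\psi\in\Irr(U)$, a \emph{quasi-primitive} character with \emph{both} factors \emph{primitive} that still induces $\chi$ and whose data pins down $\delta$. This requires a careful induction on $|G:U|$ (or on $|G|$) in which one must simultaneously arrange primitivity of $\psi_{\pi'}$ and quasi-primitivity of $\psi$ while preserving the identity $(\psi_{\pi'})_Q=\delta$ up to conjugacy — the delicate point being that shrinking $U$ to make $\psi_{\pi'}$ primitive may destroy $\pi$-factoredness of $\psi$ unless the relevant $\pi$-special normal constituent is invariant, which is exactly where Lemma \ref{CL3.2} and the structure of $\pi$-special characters must be invoked. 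Controlling the interaction between this shrinking process and the sign characters $\delta_{(G,U)}$ (so that ordinary versus $\pi$-induction discrepancies do not change parity of degrees) is the bookkeeping that needs the most care.
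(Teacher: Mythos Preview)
Your overall plan parallels the paper's, but there are two substantive gaps.

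First, in your $(b)\Rightarrow(a)$ argument you write ``by (b) it has odd degree, so $\psi^*_{\pi'}(1)$ is odd and $\pi'$, hence $=1$''. This inference fails for general $\pi$ with $2\notin\pi$: since $2\in\pi'$, an odd $\pi'$-number need not equal $1$ (e.g.\ if $\pi=\{3\}$ then $5$ is an odd $\pi'$-number). The paper's proof hinges on a separate claim: for any $\pi$-factored $\psi=\alpha\beta$ inducing $\chi$, $\psi(1)$ is odd if and only if it is a $\pi$-number. The nontrivial direction uses that $(\psi^0)^G=\varphi$ forces $\psi^0\in\I(U)$, that $\beta^0$ is rational valued (Corollary~2.14 of \cite{I2018}), and crucially Lemma~5.4 of \cite{I2018}, which says an odd-degree rational $\I$-character must be principal---hence $\beta$ is linear. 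Without this device you cannot pass from ``odd'' to ``linear $\pi'$-factor'', nor can you ``chase back down'': when you write $\beta=\rho^U$ with $\rho$ primitive on $Y\le U$ and later learn $\rho(1)=1$, you still need $Y=U$ to conclude $\beta(1)=1$, and the paper gets this precisely from $\beta^0\in\I(U)$. Your proposed use of Lemma~\ref{CL3.2} here is misplaced; the reduction to primitive factors uses Lemma~\ref{spec-ind} instead.

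Second, in your $(c)\Rightarrow(b)$ step you want to replace the quasi-primitive $\psi$ on $V$ by $\xi=\psi^{NV}$ with $N=O_\pi(G)O_{\pi'}(G)$ and apply (c). For this you need $\xi$ to be $\pi$-factored with primitive $\pi'$-factor, but that is not automatic. The paper first proves $|NV:V|$ is a $\pi$-number---using that the $\B$-lift $\hat\varphi$ has $\pi$-special constituents on $N$, so $\varphi_N$ has $\pi$-degree constituents, and then Lemma~5.21 of \cite{I2018}---and only then invokes Lemma~3.1 of \cite{WJ2022} to obtain that $\xi$ is $\pi$-factored with $(\xi_{\pi'})_V=\delta_{(NV,V)}\beta$; primitivity of $\xi_{\pi'}$ then follows by Mackey. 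Your phrase ``noting a quasi-primitive character's constituents behave well'' does not supply this argument. Finally, your concern about sign characters affecting parity is a red herring: $\delta_{(G,U)}$ has values $\pm1$, so twisting by it never changes degrees.
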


\begin{proof}
Note that by definition, (a) is equivalent to saying that
each $\pi$-factored character that induces $\chi$ has $\pi$-degree.
To complete the proof, we fix a subgroup $U$ of $G$, and let $\psi\in\Irr(U)$ be $\pi$-factored with $\psi^G=\chi$. For notational simplicity, we write $\alpha=\psi_\pi$ and $\beta=\psi_{\pi'}$, so that $\psi=\alpha\beta$.

We claim that $\psi(1)$ is a $\pi$-number if and only if it is odd.
To see this, note that $2\notin\pi$, so $\psi(1)$ is odd whenever it is a $\pi$-number.
Conversely, if $\psi(1)$ is odd, then $\beta(1)$ is also odd,
and since
$$(\alpha^0\beta^0)^G=(\psi^0)^G=(\psi^G)^0=\chi^0=\varphi,$$
it follows that $\beta^0\in\I(U)$.
Now Corollary 2.14 of \cite{I2018} implies that $\beta^0$ is rational valued,
and Lemma 5.4 of \cite{I2018} tells us that $\beta^0$ must be principal.
Thus $\beta$ is linear, and $\psi(1)=\alpha(1)$ is a $\pi$-number, as claimed.
In particular, this proves that (a) implies both (b) and (c).

Now assume (b). To establish (a), it suffices to show that $\beta$ is linear.
If $\alpha=\sigma^U$, where $\sigma\in\Irr(X)$ is primitive and $X\le U$,
then $\chi=(\alpha\beta)^G=((\sigma\beta_X)^U)^G=(\sigma\beta_X)^G$.
Note that $|U:X|$ divides $\alpha(1)$, which is a $\pi$-number,
so $\beta_X$ is $\pi'$-special.
Furthermore, by Lemma \ref{spec-ind}, we see that $\delta_{(U,X)}\sigma$ is $\pi$-special, so $\sigma\beta_X=(\delta_{(U,X)}\sigma)(\delta_{(U,X)}\beta_X)$ is $\pi$-factored. To prove that $\beta$ is linear, we can assume, therefore, that $\alpha$ is primitive with $\sigma\beta_X$ in place of $\psi$ and $X$ in place of $U$.
Similarly, if $\beta=\rho^U$, where $\rho\in\Irr(Y)$ is primitive and $Y\le U$.
Then $\alpha_Y$ is $\pi$-special as $|U:Y|$ is a $\pi'$-number,
and $\rho$ is $\pi'$-special by Lemma \ref{spec-ind} again.
Now $\chi=(\alpha\rho^U)^G=(\alpha_Y\rho)^G$
and $\alpha_Y\rho$ is $\pi$-factored.
If we can prove that $\rho$ is linear, then $\rho^0$ must be principal,
and since $(\rho^0)^U=(\rho^U)^0=\beta^0\in\I(U)$ (see the previous paragraph),
it follows that $U=Y$, and thus $\beta=\rho$ is linear.
We may assume, therefore, that $\beta$ is primitive
with $\alpha_Y\rho$ in place of $\psi$ and $Y$ in place of $U$.
Now we repeat this progress until both $\alpha$ and $\beta$ are primitive.
It is clear that primitive characters in any group are always quasi-primitive,
and that a $\pi$-factored character in a $\pi$-separable group is quasi-primitive if and only if
its $\pi$-special factor and $\pi'$-special factor are all quasi-primitive (by basic properties of $\pi$-special characters).
Thus $\psi=\alpha\beta$ is quasi-primitive, and then (a) follows by (b).

Finally, assume (c). Write $N=O_\pi(G)O_{\pi'}(G)$.
Then each irreducible character of $N$ is clearly $\pi$-factored. We claim that $|NU:U|$ is a $\pi$-number. To see this, let $\hat\varphi\in\B(G)$ be a lift of $\varphi$,
so that every irreducible constituent $\theta$ of $\hat\varphi_N$ is also a $\B$-character.
Note that $\theta\in\Irr(N)$ is $\pi$-factored, so $\theta$ must be $\pi$-special (see Theorem 4.12 of \cite{I2018}), and thus $\theta^0\in\I(N)$ has $\pi$-degree
lying under $\varphi$. Moreover, since $(\psi^0)^G=(\psi^G)^0=\chi^0=\varphi$, it follows by Lemma 5.21 of \cite{I2018} that $|NU:U|$ is a $\pi$-number, as claimed.
Writing $\xi=\psi^{NU}$, we see by Lemma 3.1 of \cite{WJ2022}
that $\xi$ is $\pi$-factored with $(\xi_{\pi'})_U=\delta_{(NU,U)}\beta$ .
To establish (a), we may assume further that $\psi$ is quasi-primitive and $\beta$ is primitive
by the equivalence of (a) with (b) just proved.
Then $\xi_{\pi'}$ is also primitive with degree $\beta(1)$, so (a) follows by (c)
with $\xi$ in place of $\psi$ and $NU$ in place of $U$.
\end{proof}

Following the terminology of Cossey and Lewis \cite{CL2011},
a {\bf $\pi$-chain} $\mathcal N$ of $G$ is a chain of normal subgroups of $G$:
$$\mathcal N=\{1=N_0\le N_1\le \cdots \le N_{k-1}\le N_k=G\}$$
such that $N_i/N_{i-1}$ is either a $\pi$-group or a $\pi'$-group for $i=1,\dots,k$, and $\chi\in\Irr(G)$ is an {\bf $\mathcal N$-lift} if every irreducible constituent of $\chi_N$ is a lift for all $N\in\mathcal N$.

\begin{theorem}\label{ver-lin}
Let $G$ be $\pi$-separable with $2\notin\pi$, and let $\chi\in\Irr(G)$ be a lift
of $\varphi\in\I(G)$. Assume one of the following conditions.

{\rm (a)} $\chi$ has a linear Navarro vertex.

{\rm (b)} $\chi$ is an $\mathcal{N}$-lift for some $\pi$-chain $\mathcal{N}$ of $G$.\\
Then all of the vertex pairs for $\chi$ are linear.
\end{theorem}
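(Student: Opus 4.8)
The plan is to reduce everything to Lemma~\ref{lin-ver}. By the equivalences proved there, in order to show that all vertex pairs for $\chi$ are linear it is enough to verify condition (b) (or (c)) of that lemma, i.e.\ that every quasi-primitive $\psi\in\Irr(U)$ with $\psi_\pi$ and $\psi_{\pi'}$ primitive and $\psi^G=\chi$ has odd degree; and, as noted inside the proof of Lemma~\ref{lin-ver}, since $\psi$ induces the lift $\chi$ this is the same as saying that $\psi_{\pi'}$ is linear, equivalently that $\psi(1)$ is a $\pi$-number (the passage between ``odd'' and ``$\pi$-number'' being exactly where the hypothesis $2\notin\pi$ is used). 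I would carry this out by induction on $|G|$, handling cases (a) and (b) in parallel.

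For the inductive step I would work through the normal nucleus. If $\chi$ is $\pi$-factored, then $(G,\chi)$ is its own normal nucleus; under hypothesis (a) the Navarro vertex $(Q,\delta)$ is obtained by restricting the $\pi'$-special character $\chi_{\pi'}$ to a Hall $\pi'$-subgroup $Q$ of $G$, and since that restriction is irreducible of degree $\chi_{\pi'}(1)$ by Lemma~\ref{res}, linearity of $\delta$ forces $\chi_{\pi'}$ linear, so $\chi(1)=\chi_\pi(1)$ is odd and then so is $\psi(1)\mid\chi(1)$; under hypothesis (b) I would instead use that a $\pi$-factored $\mathcal N$-lift is forced to be $\pi$-special (running along the chain and invoking Lemmas~\ref{lift} and \ref{res} together with Theorem~4.12 of \cite{I2018}), again giving odd degree. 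If $\chi$ is not $\pi$-factored, I would pick $N\NM G$ and a $\pi$-factored $\theta\in\Irr(N)$ lying under $\chi$ with $G_\theta<G$ as in Lemma~\ref{N-nuc}, pass to the Clifford correspondent $\mu=\chi_\theta\in\Irr(G_\theta)$, and check that the hypotheses descend: $\mu$ is again a lift, $\mu$ has a linear Navarro vertex because the normal nucleus of $\chi$ coincides with that of $\mu$, and in case (b) $\mu$ is an $\mathcal N_\theta$-lift for the $\pi$-chain $\mathcal N_\theta=\{M\cap G_\theta:M\in\mathcal N\}$. Finally I would transport the inducer: after replacing $\psi$ by a $G$-conjugate one arranges that $\psi$ lies over $\theta$, and — working with condition (c) of Lemma~\ref{lin-ver}, so that $O_\pi(G)O_{\pi'}(G)\le U$, and taking $N$ inside $U$ where possible — quasi-primitivity of $\psi$ forces $\theta$ to be $U$-invariant, hence $U\le G_\theta$; then the Clifford correspondent $\psi_\theta$ of $\psi$ over $\theta$ (using ordinary or, to stay inside $\pi$-factored characters, $\pi$-induction, via Lemmas~\ref{3} and \ref{4}) induces $\mu$, still has primitive $\pi$- and $\pi'$-parts, and satisfies $\psi(1)=(\text{odd})\cdot\psi_\theta(1)$. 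The inductive hypothesis applied inside $G_\theta$ then yields $\psi_\theta(1)$ odd, and hence $\psi(1)$ odd.

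The main obstacle is precisely this transport of the inducer across the Clifford correspondence: one needs the descending subgroup $N$ to lie inside $U$ (so that the descent of $\psi$ is clean and the index $\psi(1)/\psi_\theta(1)$ is odd), while Lemma~\ref{N-nuc} forces the descent to be over the \emph{maximal} normal subgroup with $\pi$-factored constituents in order to guarantee $G_\theta<G$ — and these two demands need not be compatible, so when the maximal $N$ is not contained in $U$ one must first enlarge $U$ (again exploiting $\pi$-induction) or argue around it. Alongside this sits the verification that $\psi_\theta$ retains primitivity of its $\pi$- and $\pi'$-special factors, which rests on the behaviour of $\pi$-special characters under Clifford correspondence, and the verification that $\mu=\chi_\theta$ is still a lift and still an $\mathcal N_\theta$-lift, which I would reduce to facts about $\B$-characters (Theorem~5.1 and Theorem~4.12 of \cite{I2018}) and to the compatibility of restriction with $\pi$-chains. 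Once these technical points are settled, the induction closes routinely.
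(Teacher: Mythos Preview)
Your approach matches the paper's: reduce via Lemma~\ref{lin-ver} and argue by induction on $|G|$ through a suitable normal subgroup $N$ and the Clifford correspondence. You also correctly isolate the real obstruction, namely that $N$ need not be contained in $U$. What is missing is the precise mechanism for the enlargement. The paper resolves it as follows: under either hypothesis one arranges that the $\pi$-factored constituent $\theta$ of $\chi_N$ has $\pi$-degree (in case~(a) because $\theta_{\pi'}$ lies under the linear $\gamma_{\pi'}$ of the normal nucleus; in case~(b) by taking $N$ to be the maximal member of $\mathcal N$ with $\pi$-factored constituents and appealing to the argument of Theorem~3.4 of \cite{WJ2022}). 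Then $\theta^0\in\I(N)$ has $\pi$-degree and lies under $\varphi=(\psi^0)^G$, so Lemma~5.21 of \cite{I2018} forces $|NU:U|$ to be a $\pi$-number; Lemma~3.1 of \cite{WJ2022} then shows that $\xi=\psi^{NU}$ is again $\pi$-factored with $(\xi_{\pi'})_U=\delta_{(NU,U)}\beta$, and a Mackey argument gives $\xi_{\pi'}$ primitive. This is the step you should spell out rather than leave as ``enlarge $U$, exploiting $\pi$-induction''.

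Two further points. First, your worry about preserving primitivity of both $\psi_\pi$ and $\psi_{\pi'}$ through the Clifford descent is unnecessary. The induction hypothesis is the full theorem for $G_\theta$ (all vertex pairs of $\chi_\theta$ are linear), so at the descended level $\psi_\theta$ only needs to be $\pi$-factored and to induce $\chi_\theta$, nothing more. Primitivity of $\beta=\psi_{\pi'}$ is used exactly once after the enlargement: since $\beta_N$ is then a multiple of $\theta_{\pi'}$, the latter is $U$-invariant, whence $U_\theta=U_{\theta_\pi}$ and $|U:U_\theta|$ is a $\pi$-number by Lemma~\ref{spec-ind}. Second, a small inaccuracy: a $\pi$-factored $\mathcal N$-lift is not forced to be $\pi$-special (a nontrivial linear $\pi'$-special character of a $\pi'$-group is already a counterexample); what holds, and what the paper invokes via Lemma~2.3(2) of \cite{WJ2022}, is only that it has $\pi$-degree.
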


\begin{proof}
By Lemma \ref{lin-ver}, it suffices to show that
every $\pi$-factored character $\psi\in\Irr(U)$ that induces $\chi$ has odd degree,
where $U\le G$ and $\psi_{\pi'}$ is primitive.
We write $\alpha=\psi_\pi$ and $\beta=\psi_{\pi'}$, so that $\psi=\alpha\beta$.

Suppose first that $\chi$ is $\pi$-factored.
To show the theorem in this situation, it suffices to establish that $\chi(1)$ is a $\pi$-number
as $2\notin\pi$.
In case (b), this is true by Lemma 2.3(2) of \cite{WJ2022},
and in case (a), since $(G,\chi)$ is itself a normal nucleus for $\chi$,
it follows by the definition of Navarro vertices that $\chi$ has $\pi$-degree.

Now suppose that $\chi$ is not $\pi$-factored.
Reasoning as we did in the proof of Theorem 3.4 of \cite{WJ2022},
there exists a normal subgroup $N$ of $G$
such that every irreducible constituent $\theta$ of $\chi_N$ is $\pi$-factored with $\pi$-degree
and $G_\theta<G$.
(Specifically, in case (a) we let $N$ be the unique maximal normal subgroup of $G$ such that
$\chi_N$ has $\pi$-factored irreducible constituents,
and in case (b) we choose $N\in\mathcal N$ maximal with the property that
all of the irreducible constituents of $\chi_N$ are $\pi$-factored.)
Furthermore, the Clifford correspondent $\chi_\theta$ of $\chi$ over $\theta$ also
satisfies the both conditions on $\chi$ (by replacing $G$ with $G_\theta$).
Observe that $\theta^0$ is irreducible with $\pi$-degree, and that $\theta^0$ lies under $\varphi$. Since $(\psi^0)^G=\chi^0=\varphi$,
we deduce by Lemma 5.21 of \cite{I2018} that $|NU:U|$ is a $\pi$-number.
Now Lemma 3.1 of \cite{WJ2022} applies, and writing $\xi=\psi^{NU}$,
we know that $\xi$ is $\pi$-factored with $(\xi_{\pi'})_U=\delta_{(NU,U)}\beta$.
In particular, $\beta$ is linear if and only if $\xi_{\pi'}$ is.
It is easy to see that $\xi_{\pi'}$ is necessarily primitive as it is an extension of
the primitive character $\delta_{(NU,U)}\beta$ (by Mackey).
Now replacing $U$ by $NU$ and $\psi$ by $\xi$,
we may assume further that $N\le U$.
Moreover, we can replace $\theta$ by a conjugate and assume that $\theta$ lies under $\psi$.

We consider the Clifford correspondents $\psi_\theta\in\Irr(U_\theta)$ and $\chi_\theta\in\Irr(G_\theta)$ of $\psi$ and $\chi$ over $\theta$, respectively.
Note that $N\NM U$ and that both $\psi$ and $\theta$ are $\pi$-factored,
so $\theta_\pi$ lies under $\alpha$ and $\theta_{\pi'}$ lies under $\beta$.
Since $\beta$ is assumed to be primitive, it follows that $\beta_N$ is a multiple of $\theta_{\pi'}$, and hence $\theta_{\pi'}$ is invariant in $U$.
From this, we deduce that $U_\theta$ is exactly the inertia group of $\theta_\pi$ in $U$.
Let $\tilde\alpha$ be the Clifford correspondent of $\alpha$ over $\theta_\pi$,
so that $\tilde\alpha^U=\alpha$.
By Lemma \ref{spec-ind}, we see that $|U:U_\theta|$ is a $\pi$-number and $\delta_{(U,U_\theta)}\tilde\alpha$ is $\pi$-special.
Furthermore, we have that $\tilde\beta\in\Irr(U_\theta)$ is $\pi'$-special, where $\tilde\beta$ denotes the restriction of $\beta$ to $U_\theta$.
Now $(\tilde\alpha\tilde\beta)^U=\tilde\alpha^U\beta=\alpha\beta=\psi$,
so $\tilde\alpha\tilde\beta$ is irreducible, which clearly lies over $\theta_\pi\theta_{\pi'}=\theta$.
It follows that $\tilde\alpha\tilde\beta$ is the Clifford correspondent of $\psi$ over $\theta$,
that is, $\psi_\theta=\tilde\alpha\tilde\beta$.
Also, we have $\psi_\theta=(\delta_{(U,U_\theta)}\tilde\alpha)(\delta_{(U,U_\theta)}\tilde\beta)$,
and thus $\psi_\theta$ is $\pi$-factored.
Note that $(\psi_\theta)^{G_\theta}=\chi_\theta$ and $G_\theta<G$.
By induction on $|G|$, we conclude that $\psi_\theta$ has $\pi$-degree,
and thus $\psi(1)=|U:U_\theta|\psi_\theta(1)$ is a $\pi$-number.
The proof is now complete.
\end{proof}

We can now prove Theorem A in the introduction,
which is a special case of the following more general result (by taking $\pi$ to be the set of all odd primes).
Recall that quasi-primitive characters of solvable groups are primitive (see Theorem 11.33 of \cite{I1976}).

\begin{theorem}\label{A}
Let $G$ be a $\pi$-separable group with $2\notin\pi$, and let $\chi\in\Irr(G)$ be a lift of some $\varphi\in\I(G)$. Then the following hold.

{\rm (a)} $\chi$ has a linear Navarro vertex if and only if every vertex pair for $\chi$ is linear.
In that case, all of the twisted vertices for $\chi$ are linear and conjugate in $G$.

{\rm (b)} $\chi$ has a linear Navarro vertex if and only if every quasi-primitive character inducing $\chi$ has odd degree.

{\rm (c)} If $\chi$ is an $\mathcal{N}$-lift for some $\pi$-chain $\mathcal{N}$ of $G$,
then $\chi$ has a linear Navarro vertex.
\end{theorem}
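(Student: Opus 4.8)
The plan is to obtain Theorem \ref{A} by combining the two technical results already established, Lemma \ref{lin-ver} and Theorem \ref{ver-lin}, with the facts about twisted vertices from \cite{WJ2022}, organized around the following chain of equivalences for a lift $\chi\in\Irr(G)$ of $\varphi\in\I(G)$:
\emph{(i)} $\chi$ has a linear Navarro vertex; \emph{(ii)} every vertex pair for $\chi$ is linear; \emph{(iii)} every quasi-primitive character inducing $\chi$ has odd degree. The implication \emph{(ii)}$\Rightarrow$\emph{(i)} is immediate, because a Navarro vertex is, by its construction in Subsection 2.4, a particular vertex pair for $\chi$; the implication \emph{(i)}$\Rightarrow$\emph{(ii)} is precisely Theorem \ref{ver-lin}(a). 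Together these give part (a) apart from the twisted-vertex assertions, and, combined with Theorem \ref{ver-lin}(b), they give part (c): an $\mathcal N$-lift has all vertex pairs linear by Theorem \ref{ver-lin}(b), hence a linear Navarro vertex.

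For part (b) I would close the loop between \emph{(ii)} and \emph{(iii)}. The key elementary point for \emph{(ii)}$\Rightarrow$\emph{(iii)} is that every quasi-primitive $\psi\in\Irr(U)$ is automatically $\pi$-factored: applying Lemma \ref{N-nuc} inside $U$, the family of normal subgroups $N\NM U$ for which every irreducible constituent of $\psi_N$ is $\pi$-factored is nonempty (it contains $1$) and so has a maximal member, and quasi-primitivity forces $\psi_N$ to be a multiple of a single, necessarily $U$-invariant, constituent $\theta$; Lemma \ref{N-nuc} then yields $N=U$ and $\psi=\theta$, so $\psi$ is $\pi$-factored. Now if \emph{(ii)} holds, then (as noted at the start of the proof of Lemma \ref{lin-ver}) every $\pi$-factored character inducing $\chi$ has $\pi$-degree; applying this to a quasi-primitive inducer gives odd degree, since $2\notin\pi$. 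Conversely, condition (b) of Lemma \ref{lin-ver} is a weakening of \emph{(iii)} — it tests only those inducers whose $\pi$- and $\pi'$-special factors are primitive, and such inducers are in particular quasi-primitive — so \emph{(iii)} implies condition (b) of Lemma \ref{lin-ver}, which returns \emph{(ii)}. This establishes \emph{(i)}$\Leftrightarrow$\emph{(ii)}$\Leftrightarrow$\emph{(iii)}, hence parts (a) and (b); the statement of Theorem A in the introduction then follows on taking $\pi$ to be the set of odd primes, since quasi-primitive characters of solvable groups are primitive (Theorem 11.33 of \cite{I1976}).

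It remains to treat the twisted-vertex clause of part (a): once all vertex pairs for $\chi$ are linear, all twisted vertices for $\chi$ are linear and conjugate. For linearity I would pass from $\pi$-induction to ordinary induction using the $\pi$-standard sign character: if $\psi\in\Irr(U)$ is $\pi$-factored and $\pi$-induces $\chi$, then $\delta_{(G,U)}\psi$ is still $\pi$-factored — its $\pi'$-special factor is $\delta_{(G,U)}\psi_{\pi'}$, since $\delta_{(G,U)}$ has $2$-power order and $2\in\pi'$ — and it ordinarily induces $\chi$, so it witnesses a genuine vertex pair for $\chi$ of the same $\pi'$-degree $\psi_{\pi'}(1)$; linearity of that vertex pair forces $\psi_{\pi'}(1)=1$, i.e. the twisted vertex is linear. (This is essentially Lemma 3.1 of \cite{WJ2022}.) The conjugacy of the linear twisted vertices so obtained is then Theorem B of \cite{WJ2022}.

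Finally, a word on where the difficulty lies. The substantive work has been pushed entirely into Theorem \ref{ver-lin}, Lemma \ref{lin-ver}, and the machinery of \cite{WJ2022}; within the present proof the only real care needed is bookkeeping — keeping the three nested classes of test characters ($\pi$-factored inducers $\supseteq$ quasi-primitive inducers $\supseteq$ inducers with primitive $\pi$- and $\pi'$-special factors) straight so that Lemma \ref{lin-ver} applies, and translating the $\pi$-induction language of \cite{WJ2022} into ordinary vertex pairs via the sign character. The step most likely to trip up a reader is the claim that quasi-primitive characters are $\pi$-factored, which is why I would write out the Lemma \ref{N-nuc} argument for it in full rather than leave it implicit.
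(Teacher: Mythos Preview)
Your proposal is correct and follows exactly the paper's own route: the paper's proof of Theorem~\ref{A} is the single sentence ``The result follows by Lemma~\ref{lin-ver} and Theorem~\ref{ver-lin}, together with Theorem~B of \cite{WJ2022}'', and you have simply unpacked that sentence, organizing the argument around the equivalence (i)$\Leftrightarrow$(ii)$\Leftrightarrow$(iii) and filling in the observation that quasi-primitive characters are $\pi$-factored via Lemma~\ref{N-nuc}.

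One small simplification: for the linearity of twisted vertices you detour through $\pi$-induction, but this is unnecessary. A twisted vertex $(Q,\delta)$ arises from an \emph{ordinary} inducer $\psi^G=\chi$ with $\psi$ $\pi$-factored, via $\delta=\delta_{(U,Q)}(\psi_{\pi'})_Q$; since $\delta_{(U,Q)}$ is a sign character, $\delta$ is linear if and only if the corresponding vertex pair $(Q,(\psi_{\pi'})_Q)$ is. So linearity of all vertex pairs immediately gives linearity of all twisted vertices, and conjugacy then follows from Theorem~B of \cite{WJ2022} as you say.
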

\begin{proof}
The result follows by Lemma \ref{lin-ver} and Theorem \ref{ver-lin}, together with Theorem B of \cite{WJ2022}.
\end{proof}

\section{A new modification of vertex pairs}
We first need to extend the definition of the set $\LN$, which appeared in the introduction,
and for simplicity, we introduce a notion.

\begin{defn}\label{good}
Let $G$ be $\pi$-separable, and let $\chi\in\Irr(G)$.
We say that $\chi$ is a {\bf good lift} if $\chi$ is a lift for some $\varphi\in\I(G)$
with a linear Navarro vertex.
The set of all good lifts for $\varphi$ is denoted $\LN$.
\end{defn}

Of course, all of the lifts of a $\pi$-separable group are good whenever $2\in\pi$ by Lemma 4.3 of \cite{CL2012}, so this notion is useful only for $2\notin\pi$.

The following is a fundamental result.

\begin{lem}\label{good-trans}
Let $\chi\in\Irr(G)$ be a good lift, where $G$ is $\pi$-separable with $2\notin\pi$.
If $\chi=\psi^G$ with $\psi\in\Irr(U)$ and $U\le G$,
then $\psi$ is also a good lift.
\end{lem}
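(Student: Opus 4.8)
The plan is to check the two conditions of Definition~\ref{good} for $\psi$: that $\psi$ is a lift of some $\pi$-partial character of $U$, and that $\psi$ has a linear Navarro vertex.

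First I would verify that $\psi$ is a lift. Writing $\varphi=\chi^0\in\I(G)$ and using that restriction to $\pi$-elements commutes with induction, we get $(\psi^0)^G=(\psi^G)^0=\chi^0=\varphi$. Decompose $\psi^0=\sum_i a_i\mu_i$ with distinct $\mu_i\in\I(U)$ and positive integers $a_i$; then $\varphi=\sum_i a_i\,\mu_i^G$ exhibits $\varphi$ as a sum of nonzero $\pi$-partial characters of $G$. Since $\varphi$ is an $\I$-character, there can be only one term and $a_1=1$, so $\psi^0=\mu_1\in\I(U)$ and $\psi$ is a lift of $\mu_1$.

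Next, for the Navarro vertex, the key observation is that every vertex pair for $\psi$ is already a vertex pair for $\chi$. Indeed, if $(Q,\delta)$ is a vertex pair for $\psi$, choose $V\le U$ with $Q$ a Hall $\pi'$-subgroup of $V$ and $\eta\in\Irr(V)$ $\pi$-factored with $\eta^U=\psi$ and $\delta=(\eta_{\pi'})_Q$; then transitivity of induction gives $\eta^G=(\eta^U)^G=\psi^G=\chi$, so $(Q,\delta)$ is a vertex pair for $\chi$ witnessed by the same data. Since $\chi$ is a good lift, Theorem~\ref{A}(a) tells us that all of its vertex pairs are linear, so $\delta$ is linear. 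Hence every vertex pair for $\psi$ is linear; as a Navarro vertex for $\psi$ is, by its construction, one of the vertex pairs for $\psi$ (equivalently, now that we know $\psi$ is a lift, we may apply Theorem~\ref{A}(a) to $\psi$ inside the $\pi$-separable group $U$, still with $2\notin\pi$), $\psi$ has a linear Navarro vertex. Together with the previous paragraph this shows $\psi$ is a good lift.

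I do not expect a genuine obstacle: once Theorem~\ref{A}(a) is available, the statement is essentially transitivity of induction, the point being that the defining data of a vertex pair for $\psi$ is automatically defining data of a vertex pair for $\chi$. The only place calling for a little care is the first paragraph — one must confirm that $\psi$ really is a lift before invoking Definition~\ref{good} (or Theorem~\ref{A}) for it — and, if one prefers to argue through Lemma~\ref{lin-ver} rather than vertex pairs directly, one would observe in the same way that each $\pi$-factored character inducing $\psi$ also induces $\chi$, hence has $\pi$-degree.
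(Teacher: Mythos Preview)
Your proof is correct and follows essentially the same approach as the paper: both argue that every vertex pair of $\psi$ is a vertex pair of $\chi$ and then invoke Theorem~\ref{A}. You are simply more explicit than the paper in first verifying that $\psi^0\in\I(U)$, a point the paper's one-line proof takes for granted but which is indeed needed for Definition~\ref{good}.
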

\begin{proof}
It is clear that each vertex pair of $\psi$ is also a vertex pair of $\chi$,
and thus the result follows by Theorem \ref{A}.
\end{proof}

\begin{lem}\label{11}
Let $\chi\in\Irr(G)$ be a good lift, where $G$ is $\pi$-separable with $2\notin\pi$,
and let $N\NM G$.
If $\theta\in\Irr(N)$ is a $\pi$-factored constituent of $\chi_N$,
then $\theta(1)$ is a $\pi$-number. In particular, $\theta$ is a lift.
\end{lem}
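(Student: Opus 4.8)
The plan is to establish that $\theta(1)$ is a $\pi$-number by induction on the index $|G:N|$; once this is known, the assertion that $\theta$ is a lift comes almost for free, since then $\theta_{\pi'}(1)$ is at once a $\pi$-number and a $\pi'$-number, so $\theta_{\pi'}$ is linear, and $\theta^0=(\theta_\pi)^0(\theta_{\pi'})^0$ is the product of the irreducible $\pi$-partial character $(\theta_\pi)^0$ (Lemma \ref{lift}) with a linear $\pi$-partial character, hence again irreducible. So the whole content is the statement about degrees.

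For the base case $N=G$ we have $\theta=\chi$, so $\chi$ is $\pi$-factored and $(G,\chi)$ is a normal nucleus for $\chi$; thus a Navarro vertex for $\chi$ has the form $(Q,(\chi_{\pi'})_Q)$ with $Q$ a Hall $\pi'$-subgroup of $G$, and since $\chi$ is a good lift this character is linear. As restriction does not change degree, $\chi_{\pi'}$ is linear and $\chi(1)=\chi_\pi(1)$ is a $\pi$-number.

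Now assume $N<G$. If $\theta$ is not $G$-invariant, I would pass to the Clifford correspondent $\chi_\theta\in\Irr(G_\theta\mid\theta)$ of $\chi$ over $\theta$. Then $(\chi_\theta)^G=\chi$, so $\chi_\theta$ is again a good lift by Lemma \ref{good-trans} (which is where Theorem \ref{A} enters), and $\theta$ is a $\pi$-factored constituent of $(\chi_\theta)_N$ with $|G_\theta:N|<|G:N|$; the inductive hypothesis then applies. If instead $\theta$ is $G$-invariant, then so are its canonical factors $\theta_\pi$ and $\theta_{\pi'}$, by uniqueness of the $\pi$-factorization (Lemma \ref{prod}). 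I would then choose $L\NM G$ with $N<L$ and $L/N$ minimal normal in $G/N$, so that $L/N$ is a $\pi$-group or a $\pi'$-group by $\pi$-separability, and let $\lambda$ be any irreducible constituent of $\chi_L$. Since $\theta$ is $G$-invariant, $\chi_N$ is a multiple of $\theta$, hence so is $\lambda_N$; in particular $\lambda$ lies over $\theta$ and $\theta(1)\mid\lambda(1)$. Applying Lemma \ref{CL3.2} with the invariant factor $\theta_\pi$ when $L/N$ is a $\pi'$-group, or its $\pi\leftrightarrow\pi'$-symmetric version with the invariant factor $\theta_{\pi'}$ when $L/N$ is a $\pi$-group, one gets that every member of $\Irr(L\mid\theta)$, and hence $\lambda$, is $\pi$-factored. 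So $\lambda$ is a $\pi$-factored constituent of $\chi_L$ with $|G:L|<|G:N|$, and by the inductive hypothesis $\lambda(1)$ is a $\pi$-number; therefore so is $\theta(1)$.

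The step I expect to be the crux is the $G$-invariant case: there one must trade $(N,\theta)$ for a pair $(L,\lambda)$ over a strictly larger normal subgroup without destroying either $\pi$-factoredness or the divisibility $\theta(1)\mid\lambda(1)$, and this rests on the $\pi\leftrightarrow\pi'$-symmetric form of Lemma \ref{CL3.2} (routine, but not quoted verbatim above) together with the conjugation-stability of the canonical $\pi$-factorization. The remaining ingredients — the Clifford reduction via Lemma \ref{good-trans}, the computation of the Navarro vertex when $\chi$ is $\pi$-factored, and the passage from $\pi$-degree to the lift property — are formal.
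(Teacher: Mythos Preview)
Your proof is correct, but the paper takes a shorter and more direct route. Instead of inducting on $|G:N|$, the paper simply invokes the normal nucleus $(W,\gamma)$ for $\chi$: by the recursive construction underlying Lemma~\ref{N-nuc} one can arrange $N\le W$ with $\theta$ lying under $\gamma$, and since $\chi$ is a good lift the Navarro vertex $(Q,(\gamma_{\pi'})_Q)$ is linear, forcing $\gamma_{\pi'}(1)=1$; then $\theta(1)\mid\gamma(1)=\gamma_\pi(1)$ is a $\pi$-number, and the lift statement follows as you say. The advantage of the paper's argument is that it avoids Theorem~\ref{A} entirely: your non-invariant case routes through Lemma~\ref{good-trans}, which rests on Theorem~\ref{A}, so you are invoking considerably heavier machinery than the statement requires. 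What your approach buys is that it never needs to unpack the recursive construction of the normal nucleus (the claim ``$N\le W$ and $\theta$ lies under $\gamma$'' is not literally the content of Lemma~\ref{N-nuc} and needs its own short inductive check), and your chief-factor step via Lemma~\ref{CL3.2} is a reusable template; but on balance the paper's proof is the more economical one here.
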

\begin{proof}
By Lemma \ref{N-nuc}, there exists a normal nucleus $(W,\gamma)$ for $\chi$ such that
$N\le W$ and $\theta$ lies under $\gamma$.
Then $\gamma\in\Irr(W)$ is $\pi$-factored with $\gamma^G=\chi$.
Let $Q$ be a Hall $\pi'$-subgroup of $W$ and write $\delta=(\gamma_{\pi'})_Q$.
By definition, we see that $(Q,\delta)$ is a vertex pair for $\chi$.
Since $\chi$ is a good lift, we have $\delta(1)=1$,
and thus $\gamma(1)=\gamma_\pi(1)$, which is a $\pi$-number.
Note that $\theta(1)$ divides $\gamma(1)$,
so $\theta(1)$ is a $\pi$-number.
In particular, $\theta_{\pi'}$ is linear, which implies that
$\theta^0=(\theta_\pi)^0\in\I(N)$ by Lemma \ref{lift}.
The proof is complete.
\end{proof}

We now modify the notion of vertex pairs, as mentioned in the Introduction.

 \begin{defn}\label{*}
Let $\chi\in\Irr(G)$, where $G$ is a $\pi$-separable group with $2\notin\pi$,
and suppose that $Q$ is a $\pi'$-subgroup of $G$ and $\delta\in\Irr(Q)$.
We say that $(Q,\delta)$ is a {\bf $*$-vertex} for $\chi$ if there exist a subgroup $U\le G$ and a $\pi$-factored character $\psi\in\Irr(U)$ with $\chi=\psi^{\pi G}$,
and such that $Q$ is a Hall $\pi'$-subgroup of $U$ and $\delta=(\psi_{\pi'})_Q$.
\end{defn}

Our motivation for this is the following result (compare with Lemma \ref{odd}).
For the notion of a twisted vertex of a character $\chi\in\Irr(G)$,
we refer the reader to Definition 2.1 and the remark after Lemma 2.2 of \cite{WJ2022}.

\begin{theorem}\label{unique}
Let $G$ be a $\pi$-separable group with $2\notin\pi$,
$Q$ a $\pi'$-subgroup of $G$ and $\delta\in\Lin(Q)$.
If $\chi\in\Irr(G)$ is a good lift, then $\chi$ has twisted vertex $(Q,\delta)$
if and only if $\chi$ has $*$-vertex $(Q,\delta_{(G,Q)}\delta)$.
In particular, all $*$-vertices for $\chi$ are linear and conjugate in $G$.
\end{theorem}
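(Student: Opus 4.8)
The equivalence between twisted vertices and $*$-vertices should follow from unwinding the two definitions and comparing ordinary induction with $\pi$-induction, the only difference being a twist by a $\pi$-standard sign character. I would start from a good lift $\chi\in\Irr(G)$ with twisted vertex $(Q,\delta)$, so by the definition in \cite{WJ2022} there is a subgroup $U\le G$ with $Q\in\Hall_{\pi'}(U)$ and a $\pi$-factored $\psi\in\Irr(U)$ whose $\pi'$-special factor restricts to (a twist of) $\delta$ on $Q$, and such that $\psi$ induces $\chi$ in the appropriate twisted sense. The first key step is to record precisely, from Definition 2.1 of \cite{WJ2022} and the remark after Lemma 2.2, what relation ties $\psi$ to $\chi$: this is where the sign character $\delta_{(G,?)}$ enters, and I expect it to say essentially $\psi^{\pi G}=\chi$ after absorbing signs, or $\psi^G=\chi$ with the vertex datum carrying a compensating $\delta_{(U,Q)}$ or $\delta_{(G,Q)}$ twist. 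Using Lemma \ref{1}(1) to decompose $\delta_{(G,Q)}=(\delta_{(G,U)})_Q\,\delta_{(U,Q)}$, I would show that passing from the ``twisted vertex'' bookkeeping to the ``$*$-vertex'' bookkeeping of Definition \ref{*} amounts exactly to replacing $\delta$ by $\delta_{(G,Q)}\delta$.

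For the forward direction I would take the pair $(U,\psi)$ witnessing the twisted vertex $(Q,\delta)$, replace $\psi$ by $\delta_{(G,U)}\psi$ (still $\pi$-factored, since $\delta_{(G,U)}$ is linear of $\pi'$-order $2$... indeed a $\{2\}$-character and $2\notin\pi$, so it is $\pi'$-special and can be absorbed into $\psi_{\pi'}$ by Lemma \ref{prod}), and check that $(\delta_{(G,U)}\psi)^{\pi G}=((\delta_{(G,U)}\psi)\delta_{(G,U)})^G=\psi^G=\chi$ using $\delta_{(G,U)}^2=1$. Then the new witness has $\pi'$-special factor $\delta_{(G,U)}\psi_{\pi'}$, whose restriction to $Q$ is $(\delta_{(G,U)})_Q\delta$; combined with the $\delta_{(U,Q)}$ already present in the twisted-vertex convention and Lemma \ref{1}(1), this is $\delta_{(G,Q)}\delta$, giving a $*$-vertex $(Q,\delta_{(G,Q)}\delta)$ for $\chi$. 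The reverse direction runs the same computation backwards, using that $\delta_{(G,Q)}$ is an involution so the correspondence $\delta\leftrightarrow\delta_{(G,Q)}\delta$ is its own inverse. Throughout, Lemma \ref{good-trans} and Lemma \ref{11} guarantee that the intermediate characters $\psi$ are themselves good lifts, so there is no obstruction to staying inside the class where twisted vertices are defined.

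For the final sentence — all $*$-vertices are linear and conjugate — I would invoke Theorem B of \cite{WJ2022}, which (as the excerpt states) gives exactly that all twisted vertices of a good lift are linear and conjugate in $G$. By the equivalence just proved, the $*$-vertices of $\chi$ are precisely the pairs $(Q,\delta_{(G,Q)}\delta)$ with $(Q,\delta)$ a twisted vertex; since $\delta$ is linear, so is $\delta_{(G,Q)}\delta$, and since conjugation by $g\in G$ sends $\delta_{(G,Q)}$ to $\delta_{(G,Q^g)}$ (the sign character is defined uniformly on all $\pi'$-subgroups and is $G$-equivariant), the $G$-conjugacy of the twisted vertices transports to $G$-conjugacy of the $*$-vertices.

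**Main obstacle.** The routine part is the sign-character arithmetic with Lemma \ref{1}; the delicate part is matching my reconstruction of the definition of ``twisted vertex'' from \cite{WJ2022} with what is actually written there — in particular getting the twist on the $\delta$ in the right place (a $\delta_{(U,Q)}$ versus a $\delta_{(G,Q)}$, and on which side of the equivalence). I would pin this down first by quoting Definition 2.1 and the remark after Lemma 2.2 of \cite{WJ2022} verbatim, since the entire theorem is essentially a dictionary between that definition and Definition \ref{*}, and the $G$-equivariance of $g\mapsto\delta_{(G,Q^g)}$ (needed for the conjugacy statement) should be extracted cleanly from the construction of $\delta_{(G,H)}$ in \cite{I1986} rather than assumed.
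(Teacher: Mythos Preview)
Your approach is essentially the same as the paper's: start from a witness $(U,\psi)$ for a twisted vertex, rewrite $\chi=\psi^G=(\delta_{(G,U)}\psi)^{\pi G}$, observe that $\delta_{(G,U)}\psi_{\pi'}$ is the $\pi'$-special factor of the new witness, and use Lemma \ref{1}(1) in the form $\delta_{(U,Q)}=\delta_{(G,Q)}(\delta_{(G,U)})_Q$ to see that the resulting $*$-vertex datum is exactly $\delta_{(G,Q)}\delta$; the converse and the conjugacy statement then follow from the involution property of the sign character together with Theorem \ref{A} (equivalently Theorem B of \cite{WJ2022}). Two minor remarks: the appeals to Lemma \ref{good-trans} and Lemma \ref{11} are unnecessary, since the argument is a pure sign-character bookkeeping on a single witness $(U,\psi)$ and never needs $\psi$ itself to be a good lift; and your identification of the ``main obstacle'' is exactly right --- the paper's proof simply quotes the twisted-vertex convention as $\delta=\delta_{(U,Q)}(\psi_{\pi'})_Q$ with $\psi^G=\chi$, after which the computation you describe goes through verbatim.
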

\begin{proof}
Let $(Q,\delta)$ be any twisted vertex for $\chi$.
Then $\delta$ is linear by Theorem \ref{A}.
By definition, there exist $U\le G$ and $\psi\in\Irr(U)$ with $\chi=\psi^G$,
and such that $Q$ is a Hall $\pi'$-subgroup of $U$, $\psi$ is $\pi$-factored and $\delta=\delta_{(U,Q)}(\psi_{\pi'})_Q$.
On the other hand, notice that $\chi=\psi^G=(\delta_{(G,U)}\psi)^{\pi G}$ and
that $\delta_{(G,U)}\psi_{\pi'}$ is $\pi'$-special. It follows that $(Q,\delta^*)$ is a $*$-vertex for $\chi$, where $\delta^*=(\delta_{(G,U)}\psi_{\pi'})_Q$.
By Lemma \ref{1}, $\delta_{(U,Q)}=\delta_{(G,Q)}(\delta_{(G,U)})_Q$,
so $\delta^*=\delta_{(G,Q)}\delta$.
Therefore, $(Q,\delta)\mapsto (Q,\delta_{(G,Q)}\delta)$
is a well-defined map between the set of twisted vertices for $\chi$ and the set of $*$-vertices for $\chi$. Moreover, it is easy to see that this map is bijective.

Furthermore, since $\chi$ has a linear Navarro vertex,
we conclude from Theorem \ref{A} again that all of the twisted vertices for $\chi$ are linear and conjugate, and hence all of the $*$-vertices of $\chi$ are conjugate in $G$.
This completes the proof.
\end{proof}

\section{Proofs of Theorem B and Corollary C}
We begin by introducing some more notation.
Let $G$ be a $\pi$-separable group with $2\notin\pi$.
Suppose that $Q$ is a $\pi'$-subgroup of $G$ and $\varphi\in\I(G|Q)$,
that is, $\varphi\in\I(G)$ has vertex $Q$.
By Lemma \ref{vertex} and Theorem \ref{unique},
every good lift $\chi$ for $\varphi$ has a $*$-vertex of the form $(Q,\delta)$
for some $\delta\in\Lin(Q)$.
For notational convenience, we will write $\LPS(Q,\delta)$ to denote the set of those
good lifts for $\varphi$ that have $(Q,\delta)$ as a $*$-vertex, so that
$\LN=\bigcup_{\delta} \LPS(Q,\delta)$,
where $\delta$ runs over a set of representatives of the $N_G(Q)$-orbits in $\Lin(Q)$.

We need to investigate the behavior of characters in $\LPS(Q,\delta)$ with respect to normal subgroups (compare with Lemma 3.1 of \cite{CL2010}).

\begin{lem}\label{CL3.1}
Let $\varphi\in\I(G)$, where $G$ is a $\pi$-separable group with $2\notin\pi$,
and let $Q$ be a $\pi'$-subgroup of $G$ with $\delta\in\Lin(Q)$.
Assume that $\chi\in \LPS(Q,\delta)$ and $N\NM G$.
If every irreducible constituent of $\chi_N$ is $\pi$-factored,
then the following hold.

{\rm (1)} $Q\cap N$ is a Hall $\pi'$-subgroup of $N$.

{\rm (2)} There exists a unique $\pi'$-special character $\beta$ of $N$
such that $\beta_{Q\cap N}=\delta_{Q\cap N}$.
In particular, $\beta$ is linear.

{\rm (3)} There is a $\pi$-special character $\alpha\in\Irr(N)$
such that $\alpha\beta$ is an irreducible constituent of $\chi_N$,
where $\beta$ is as in (2).
Furthermore $\alpha^0=(\alpha\beta)^0$ is an irreducible constituent of $\varphi_N$.

{\rm (4)} Every irreducible constituent of $\chi_N$ is a lift
for some irreducible $\pi$-partial character of $N$ with $\pi$-degree,
and every irreducible constituent of $\varphi_N$ can be lifted to an irreducible constituent of $\chi_N$.
\end{lem}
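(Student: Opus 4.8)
The plan is to extract a couple of preliminary facts, then reduce to the case $N\le U$, where parts (2)--(4) become short Clifford-theoretic computations. First fix data witnessing $\chi\in\LPS(Q,\delta)$: a subgroup $U\le G$ and a $\pi$-factored $\psi\in\Irr(U)$ with $\chi=\psi^{\pi G}$, $Q$ a Hall $\pi'$-subgroup of $U$, and $\delta=(\psi_{\pi'})_Q$. Since $\psi_{\pi'}$ is $\pi'$-special and $Q$ is a Hall $\pi'$-subgroup of $U$, its restriction $(\psi_{\pi'})_Q=\delta$ is irreducible (Lemma \ref{res}); as $\delta$ is linear, so is $\psi_{\pi'}$, whence $\psi^0=(\psi_\pi)^0\in\I(U)$ by Lemma \ref{lift}. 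Because $2\notin\pi$, every $\pi$-element has odd order, so the sign character $\delta_{(G,U)}$ is trivial on $\pi$-elements and therefore $\chi^0=(\psi^{\pi G})^0=(\psi^0)^G$; thus $\varphi=(\psi^0)^G$.

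I would prove (1) as follows. By hypothesis every irreducible constituent $\theta$ of $\chi_N$ is $\pi$-factored, so $\theta^0\in\I(N)$ has $\pi$-degree by Lemma \ref{11}, and it lies under $\chi^0=\varphi$. Applying Lemma 5.21 of \cite{I2018} to $\psi^0\in\I(U)$ with $(\psi^0)^G=\varphi$ then forces $|NU:U|=|N:N\cap U|$ to be a $\pi$-number. Since $N\cap U\NM U$ and $Q$ is a Hall $\pi'$-subgroup of $U$, the group $Q\cap N=Q\cap(N\cap U)$ is a Hall $\pi'$-subgroup of $N\cap U$, hence (as $|N:N\cap U|$ is a $\pi$-number) of $N$. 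This gives (1), and also shows $Q$ is a Hall $\pi'$-subgroup of $NU$.

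The key reduction is to arrange $N\le U$. Set $\xi=\psi^{\pi NU}$; by Lemma \ref{3}, $\xi^{\pi G}=\chi$, and since $\pi$-induction of a character is a character while $\chi$ is irreducible, $\xi$ is irreducible. Writing $\xi=(\delta_{(NU,U)}\psi)^{NU}$, where $\delta_{(NU,U)}\psi$ is $\pi$-factored with $\pi'$-part $\delta_{(NU,U)}\psi_{\pi'}$, Lemma 3.1 of \cite{WJ2022} together with Lemma \ref{1} shows that $\xi$ is $\pi$-factored with $(\xi_{\pi'})_U=\psi_{\pi'}$, hence $(\xi_{\pi'})_Q=\delta$. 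Moreover $N\le\Ker\delta_{(G,NU)}$ by Lemma \ref{1}(2), so $\delta_{(G,NU)}\xi$ lies under $\chi$ and restricts to $\xi_N$ on $N$; therefore every irreducible constituent of $\xi_N$ is a constituent of $\chi_N$, still $\pi$-factored. Replacing $(U,\psi)$ by $(NU,\xi)$, I may assume $N\le U$.

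With $N\le U$, part (2) is settled by $\beta:=(\psi_{\pi'})_N$, a linear $\pi'$-special character of $N$ with $\beta_{Q\cap N}=(\psi_{\pi'})_{Q\cap N}=\delta_{Q\cap N}$; uniqueness follows from injectivity of restriction $\Y(N)\to\Y(Q\cap N)$ (Lemma \ref{res}, using (1)). For (3), choose any irreducible constituent $\alpha$ of $(\psi_\pi)_N$; it is $\pi$-special, and since $\psi_N=(\psi_\pi)_N\beta$ with $\beta$ linear, $\alpha\beta$ is an irreducible constituent of $\psi_N$, hence of $\chi_N$ (as $\delta_{(G,U)}\psi$ lies under $\chi$ and $N\le\Ker\delta_{(G,U)}$); then $\alpha^0\in\I(N)$ by Lemma \ref{lift}, $\beta^0$ is principal, so $\alpha^0=(\alpha\beta)^0$ is an irreducible constituent of $(\chi_N)^0=\varphi_N$. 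Finally (4): the first assertion is Lemma \ref{11} applied to each (necessarily $\pi$-factored) constituent of $\chi_N$; for the second, write $\varphi_N=(\chi_N)^0=\sum_\theta m_\theta\theta^0$ over the irreducible constituents $\theta$ of $\chi_N$, each with $\theta^0\in\I(N)$, and invoke linear independence of $\I(N)$ to conclude that every irreducible constituent of $\varphi_N$ is one of these $\theta^0$. I expect the main obstacle to be the reduction step: checking that $\pi$-induction to $NU$ keeps the character $\pi$-factored and, after the sign-character bookkeeping of Lemma \ref{1}, returns the $*$-vertex $(Q,\delta)$ unchanged -- once $N\le U$, everything is immediate.
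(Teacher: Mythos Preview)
Your proof is correct, but it follows a different route from the paper's. The paper invokes Theorem~\ref{unique} at the outset: since all $*$-vertices for $\chi$ are conjugate, one may take the witness $(U,\psi)$ to be a normal nucleus $(W,\gamma)$, and then Lemma~\ref{N-nuc} gives $N\le W$ immediately. From there parts (1)--(4) are read off directly from $(\gamma_{\pi'})_N$ and $(\gamma_\pi)_N$, with no reduction step needed.

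You instead start from an arbitrary $*$-vertex witness $(U,\psi)$, prove (1) via Lemma~\ref{11} and Lemma~5.21 of \cite{I2018} (the same device used in the proofs of Lemma~\ref{lin-ver} and Theorem~\ref{ver-lin}), and then $\pi$-induce to $NU$ using Lemma~3.1 of \cite{WJ2022} and the sign-character identities of Lemma~\ref{1} to arrange $N\le U$ while preserving $(Q,\delta)$. This avoids appealing to the conjugacy of $*$-vertices, at the cost of the extra bookkeeping you flagged as the main obstacle; the paper trades that bookkeeping for an early use of Theorem~\ref{unique}. Both approaches yield the same endgame for (2)--(4), and your treatment of (4) via linear independence of $\I(N)$ is essentially what the paper has in mind.
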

\begin{proof}
By Theorem \ref{unique}, all $*$-vertices for $\chi$ form a single $G$-conjugacy class,
and so there exists a normal nucleus $(W,\gamma)$ of $\chi$
where $\gamma\in\Irr(W)$ is $\pi$-factored with $\chi=\gamma^G=(\delta_{(G,W)}\gamma)^{\pi G}$,
and such that $Q$ is a $\Hall$ $\pi'$-subgroup of $W$ and $\delta=(\delta_{(G,W)}\gamma_{\pi'})_Q$.
Applying Lemma \ref{N-nuc}, we have $N\le W$, and thus $Q\cap N$ is a $\Hall$ $\pi'$-subgroup of $N$. This establishes (1).

For (2), let  $\beta=(\gamma_{\pi'})_N$, and note that $\gamma_{\pi'}(1)=\delta(1)=1$,
so $\beta$ is a linear $\pi'$-special character of $N$.
By Lemma \ref{1}(2), we see that $N\leq\Ker(\delta_{(G,W)})$, and hence
$$\beta_{Q\cap N}=((\delta_{(G,W)}\gamma_{\pi'})_N)_{Q\cap N}=((\delta_{(G,W)}\gamma_{\pi'})_Q)_{Q\cap N}=\delta_{Q\cap N}.$$
Now the uniqueness of $\beta$ is guaranteed by Lemma \ref{res}
with the roles of $\pi$ and $\pi'$ interchanged.

To show (3), let $\alpha$ be an irreducible constituent of $(\gamma_\pi)_N$.
Then $\alpha$ is $\pi$-special, so $\alpha\beta$ is irreducible by Lemma \ref{prod}.
It is clear that $\alpha\beta$ is an irreducible constituent of $\gamma_N$,
and since $\gamma$ lies under $\chi$, it follows that $\alpha\beta$ is also an irreducible constituent of $\chi_N$.
Note that $\beta\in\Lin(N)$ is $\pi'$-special, which implies that $\beta^0\in\I(N)$ is principal,
and thus $\alpha^0=(\alpha\beta)^0$ is a constituent of
$(\chi_N)^0=(\chi^0)_N=\varphi_N$.

Finally, we establish (4). By (3), we see that $\alpha\beta$ is an irreducible constituent of $\chi_N$ and that it is a lift.
Since every irreducible constituent of $\chi_N$ is conjugate to $\alpha\beta$
by Clifford' theorem, the first statement follows.
This, along the formula $\varphi_N=(\chi^0)_N=(\chi_N)^0$,
implies that the second statement holds.
\end{proof}

As an application, we establish a result that is similar to Lemma 3.3 of \cite{CL2010} for $*$-vertices. Although the proof is similar, we present it here for completeness.

\begin{lem}\label{CL3.3}
Let $G$ be $\pi$-separable with $2\notin\pi$,
and suppose that $\chi,\psi\in \LPS(Q,\delta)$,
where $\varphi\in\I(G)$ and $Q$ is a $\pi'$-subgroup of $G$ with $\delta\in\Lin(Q)$.
If $N$ is a normal subgroup of $G$,
then the irreducible constituents of $\chi_N$ are $\pi$-factored if and only if the irreducible constituents of $\psi_N$ are $\pi$-factored.
\end{lem}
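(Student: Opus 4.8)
The plan is to transcribe the proof of Lemma~3.3 of \cite{CL2010}, systematically replacing ordinary induction of characters by Isaacs' $\pi$-induction and vertex pairs by $*$-vertices. By symmetry it is enough to prove one implication, so I would assume that every irreducible constituent of $\chi_N$ is $\pi$-factored and show the same for $\psi_N$, arguing by induction on $|G|$. If $N=1$ there is nothing to prove. Otherwise fix a minimal normal subgroup $L$ of $G$ contained in $N$; since $G$ is $\pi$-separable, $L$ is a $\pi$-group or a $\pi'$-group, so every irreducible character of $L$ is $\pi$-factored. Because $G$-conjugates of a $\pi$-factored character are $\pi$-factored and the irreducible constituents of $\psi_N$ form a single $G$-orbit by Clifford's theorem, it suffices to exhibit \emph{one} $\pi$-factored irreducible constituent of $\psi_N$.

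The first real step is to match up a constituent over $L$. Applying Lemma~\ref{CL3.1} with $L$ in place of $N$ to each of $\chi$ and $\psi$ --- legitimate because the irreducible constituents of $\chi_L$ and of $\psi_L$ are automatically $\pi$-factored --- yields that $Q\cap L$ is a Hall $\pi'$-subgroup of $L$, that there is a \emph{unique} $\pi'$-special $\beta_L\in\Lin(L)$ with $(\beta_L)_{Q\cap L}=\delta_{Q\cap L}$ (the same $\beta_L$ for $\chi$ and for $\psi$, since it depends only on $(Q,\delta,L)$), and that there are $\pi$-special characters $\alpha_\chi,\alpha_\psi\in\Irr(L)$ with $\alpha_\chi\beta_L$ an irreducible constituent of $\chi_L$, $\alpha_\psi\beta_L$ one of $\psi_L$, and $\alpha_\chi^0,\alpha_\psi^0$ both irreducible constituents of $\varphi_L$. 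Since the constituents of $\varphi_L$ form one $G$-orbit and $\pi$-special lifts of a given $\I$-character are unique (Lemma~\ref{lift}), $\alpha_\chi$ and $\alpha_\psi$ are $G$-conjugate, hence so are $\lambda:=\alpha_\chi\beta_L$ and $\alpha_\psi\beta_L$; replacing the latter by a conjugate, we may assume $\lambda$ is an irreducible constituent of both $\chi_L$ and $\psi_L$. (When $L$ is a $\pi'$-group this is immediate: $L\le O_{\pi'}(G)\le Q$ forces $\alpha_\chi=\alpha_\psi=1_L$ and $\lambda=\delta_L$, which lies under both $\chi$ and $\psi$ outright.)

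Next, pass to $T=G_\lambda$ and, via Lemma~\ref{4}, to the $\pi$-induction Clifford correspondents $\widetilde\chi,\widetilde\psi\in\Irr(T\mid\lambda)$, so that $\chi=\widetilde\chi^{\pi G}$ and $\psi=\widetilde\psi^{\pi G}$. Each of $\widetilde\chi,\widetilde\psi$ is a good lift, being the ordinary Clifford correspondent twisted by the sign character $\delta_{(G,T)}$ (twisting by a $\pm1$-character and ordinary Clifford correspondence both preserve good lifts, the latter by Lemma~\ref{good-trans}). Using transitivity of $\pi$-induction (Lemma~\ref{3}) and the factorisation $\delta_{(G,Q)}=(\delta_{(G,T)})_Q\,\delta_{(T,Q)}$ of Lemma~\ref{1}, one verifies --- much as in the proof of Theorem~\ref{unique} --- that $\widetilde\chi$ and $\widetilde\psi$ carry a common $*$-vertex $(Q,\delta)$ and are lifts of a single $\I$-character $\widetilde\varphi\in\I(T)$; when $L$ is a $\pi$-group this identification is the ordinary $\I$-Clifford correspondence of $\varphi$ over $\lambda^0=\lambda$, while the $\pi'$-group case needs more care (see below). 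The constituents of the restriction $\widetilde\chi_{\,N\cap T}$ are $\pi$-factored, since $N\cap T\NM T$, $N\cap T\le N$, and normal constituents of $\pi$-factored characters are $\pi$-factored (Lemma~\ref{pi-pi}). If $T<G$, the inductive hypothesis applied to $\widetilde\chi,\widetilde\psi$ on $T$ with the normal subgroup $N\cap T$ gives that the constituents of $\widetilde\psi_{\,N\cap T}$ are $\pi$-factored; since $\pi$-induction is compatible with Clifford correspondence, this forces every irreducible constituent of $\psi_N$ lying over $\lambda$ --- hence, by Clifford, all of them --- to be $\pi$-factored. It remains to treat the case $T=G$, i.e.\ $\lambda$ is $G$-invariant: then $\lambda$ extends to $G$ by the extension theory for invariant $\pi$-special and $\pi'$-special characters, and twisting $\chi$ and $\psi$ by the inverse of a common extension and then factoring out $L$ reduces everything to $G/L$, where $|G/L|<|G|$ and induction applies.

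I expect the genuinely delicate point to be the case in which $L$ is a $\pi'$-group --- precisely the configuration that motivated introducing $\pi$-induction and $*$-vertices in this paper. There the $0$-restriction collapses on $L$, so the Clifford-correspondence identification of the common $\I$-source $\widetilde\varphi$ on $T$ is not directly available; instead one must carry the $\pi$-standard sign characters $\delta_{(G,T)}$ and $\delta_{(T,Q)}$ through the whole reduction and check, using Lemmas~\ref{1}, \ref{3}, \ref{4} together with the fact (Theorem~\ref{unique}) that all $*$-vertices of a good lift are $G$-conjugate, that the $*$-vertex of $\chi$ and of $\psi$ is transported to $T$ unchanged and that the remaining data needed to invoke the inductive hypothesis survive. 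Once this bookkeeping is settled, the rest is a routine adaptation of \cite{CL2010}.
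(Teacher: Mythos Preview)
Your outline is not the argument the paper gives (and, since the paper says its proof follows Lemma~3.3 of \cite{CL2010}, presumably not that argument either). The paper does \emph{not} induct on $|G|$, does not pass to Clifford correspondents, and never transports $*$-vertices. It takes $L\NM G$ \emph{maximal} with $L\le N$ and the constituents of $\psi_L$ $\pi$-factored, and applies Lemma~\ref{CL3.1} at $L$ to both $\chi$ and $\psi$: this yields $\alpha_1\beta$ under $\chi_L$ and $\alpha_2\beta$ under $\psi_L$ with the \emph{same} $\beta$ (determined by $\delta_{Q\cap L}$) and with $\alpha_1,\alpha_2$ $G$-conjugate (their $^0$-images both lie under $\varphi$, and $\X(L)\to\I(L)$ is injective). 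For a chief factor $M/L$ with $M\le N$, the $\pi$-factoredness of $\chi_M$ and Lemma~\ref{CL3.2} force $\beta$ or $\alpha_1$ to be $M$-invariant (according as $M/L$ is a $\pi$- or a $\pi'$-group); $G$-conjugacy passes $M$-invariance from $\alpha_1$ to $\alpha_2$; and Lemma~\ref{CL3.2} in the reverse direction makes every member of $\Irr(M\mid\alpha_2\beta)$ $\pi$-factored. Since some constituent of $\psi_M$ lies there, this contradicts the maximality of $L$. No sign characters, no inertia subgroups, no $*$-vertex bookkeeping.

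Your route, by contrast, needs the $\pi$-Clifford correspondents $\widetilde\chi,\widetilde\psi\in\Irr(T)$ to land in a \emph{common} set ${\rm L}^*_{\widetilde\varphi}(Q,\delta)$ before the inductive hypothesis applies, and this is precisely where it breaks. In the $\pi'$-case you yourself note that $\lambda^0$ is trivial, so no $\I$-Clifford correspondence over $L$ forces $\widetilde\chi^0=\widetilde\psi^0$; both induce $\varphi$, but nothing prevents them from being distinct in $\I(T)$, and tracking $\delta_{(G,T)}$ and $\delta_{(T,Q)}$ cannot manufacture that equality. Even in the $\pi$-case, where $\widetilde\varphi$ is well defined, the $*$-vertex does not descend as you need: a $*$-vertex of $\widetilde\chi$ is a $*$-vertex of $\chi$, hence $G$-conjugate to $(Q,\delta)$, but the inductive hypothesis requires it to be $T$-conjugate to $(Q,\delta)$ \emph{and} to the $*$-vertex of $\widetilde\psi$. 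Step~1 of the proof of Theorem~\ref{B'} performs exactly this transport for a single lift and already must let the target range over all $N_T(Q)$-orbits inside the $N_G(Q)$-orbit of $\delta$; there is no mechanism to force two different lifts into the same such orbit. The maximal-$L$ argument via Lemma~\ref{CL3.2} sidesteps all of this.
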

\begin{proof}
By symmetry, it is no loss to assume that each irreducible constituent of $\chi_N$ is $\pi$-factored. We need to show that the irreducible constituents of $\psi_N$ are also $\pi$-factored.

Let $L\NM G$ be maximal with the property that $L\le N$ and the irreducible constituents of $\psi_L$ are $\pi$-factored. If $L=N$, we are done, and so we suppose that $L<N$.

Note that the irreducible constituents of $\chi_L$ are also $\pi$-factored,
and by Lemma \ref{CL3.1}, there exist $\alpha_1,\alpha_2\in\X(L)$ and $\beta\in\Y(L)$
such that $\alpha_1\beta$ and $\alpha_2\beta$ are irreducible constituents of $\chi_L$ and $\psi_L$, respectively, that $\beta_{Q\cap L}=\delta_{Q\cap L}$, and that both $(\alpha_1)^0$ and $(\alpha_2)^0$ are irreducible constituents of $\varphi_L$.
Then $(\alpha_1)^0$ and $(\alpha_2)^0$ are conjugate in $G$ by Clifford's theorem for $\pi$-partial characters (see Corollary 5.7 of \cite{I2018}), which implies that $\alpha_1$ and $\alpha_2$ are $G$-conjugate
by the injection from $\X(L)\to\I(L)$ (see Lemma \ref{lift}).

Let $M/L$ be a chief factor of $G$ with $M\le N$.
Then $M/L$ is either a $\pi$-group or a $\pi'$-group,
and since we are assuming that $\chi_N$ has $\pi$-factored irreducible constituents,
each irreducible constituent of $\chi_M$ is $\pi$-factored,
and hence some member of $\Irr(M|\alpha_1\beta)$ must be $\pi$-factored.
By Lemma \ref{CL3.2}, we see that either $\beta$ or $\alpha_1$ is $M$-invariant, according as $M/L$ is a $\pi$-group or a $\pi'$-group.
Clearly $\alpha_1$ is $M$-invariant if and only if $\alpha_2$ (which is $G$-conjugate to $\alpha_1$) is $M$-invariant.
In both cases, we conclude that every member of $\Irr(M|\alpha_2\beta)$  is $\pi$-factored
by Lemma \ref{CL3.2} again. Since  $\Irr(M|\alpha_2\beta)$ contains some
irreducible constituent of $\psi_M$,
we know that the irreducible constituents of $\psi_M$ are also $\pi$-factored. This contradicts the maximality of $L$, thus proving the lemma.
\end{proof}

The following is part of Theorem 2.1 of \cite{L2010}
in which the whole group $G$ was originally supposed to be solvable.
Actually, it suffices to assume that $G$ is $\pi$-separable for our purpose,
and we omit the proof here because there is no need to repeat the argument word by word.

\begin{lem}\label{L2010-2.1}
Let $G$ be a $\pi$-separable group, and suppose that $\varphi\in\I(G)$ has vertex $Q$,
where $Q$ is a $\pi'$-subgroup of $G$.
Let $V$ be a subgroup of $G$ containing $Q$, and write $\VI(V|Q)$ to denote the set of irreducible $\pi$-partial characters of $V$ with vertex $Q$ that induce $\varphi$, that is,
$$\VI(V|Q)=\{\eta\in\I(V|Q) \,|\, \eta^G=\varphi\}.$$
If $G$ and $V$ are chosen so that $|G|+|V|$ is minimal subject to the condition that
$$|\VI(V|Q)|>|N_G(Q):N_V(Q)|,$$
then $V$ is a nonnormal maximal subgroup of $G$,
and $\varphi_N$ has a unique irreducible constituent,
where $N$ is the core of $V$ in $G$.
\end{lem}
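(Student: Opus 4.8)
The plan is to assume the conclusion fails and to work inside a minimal counterexample: fix $\varphi\in\I(G)$ with vertex $Q$ and a subgroup $V\ge Q$ with $|\VI(V|Q)|>|N_G(Q):N_V(Q)|$, chosen so that $|G|+|V|$ is as small as possible, and set $N=\Core_G(V)$. Two reductions are immediate. First, $V\neq G$: if $V=G$ then $\eta^{G}=\eta=\varphi$ for every $\eta\in\VI(G|Q)$, so $\VI(G|Q)=\{\varphi\}$ and the displayed inequality cannot hold. Second, $V$ is not normal in $G$: if $V\NM G$, then Mackey's formula gives $\varphi_{V}=\sum_{gV\in G/V}\eta^{g}$ for any $\eta\in\VI(V|Q)$, so every member of $\VI(V|Q)$ is a $G$-conjugate of $\eta$, and those with vertex exactly $Q$ are precisely its $N_G(Q)$-conjugates; since $V\le G_{\eta}$ we get $|\VI(V|Q)|=|N_G(Q):N_{G_\eta}(Q)|\le|N_G(Q):N_V(Q)|$, contradicting strictness.

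The next ingredient is a simple but crucial fact: if $\eta\in\I(V)$ and $\eta^{G}=\varphi$, then $\eta^{W}$ is irreducible for every $W$ with $V\le W\le G$. Indeed, writing $\eta^{W}=\sum_i c_i\mu_i$ with the $\mu_i\in\I(W)$ distinct and $c_i>0$ yields $\varphi=\sum_i c_i\mu_i^{G}$, and irreducibility of $\varphi$ collapses this to one summand with $c_1=1$. Moreover, choosing $U\le V$ with $Q$ a Hall $\pi'$-subgroup of $U$ and $\xi\in\I(U)$ of $\pi$-degree with $\xi^{V}=\eta$ (possible since $Q$ is a vertex of $\eta$), and using $\xi^{W}=\eta^{W}$, we see that $Q$ is a vertex of $\eta^{W}$ as well. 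Hence $\eta\mapsto\eta^{W}$ is a well-defined map $\VI(V|Q)\to\VI(W|Q)$, and for a fixed $\mu$ its fibre is exactly the set that plays the role of $\VI(V|Q)$ when the statement is applied with $W$ in place of $G$ and $\mu$ in place of $\varphi$.

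I would use this to force $V$ to be maximal in $G$. If $V<W<G$, then the triple $(W,V,\mu)$ is not a counterexample for any $\mu\in\VI(W|Q)$, because $|W|+|V|<|G|+|V|$, so each fibre above has size at most $|N_W(Q):N_V(Q)|$; together with $N_V(Q)\le N_W(Q)\le N_G(Q)$ and a bound $|\VI(W|Q)|\le|N_G(Q):N_W(Q)|$ this gives $|\VI(V|Q)|\le|N_G(Q):N_V(Q)|$, a contradiction. The one place that needs extra care is the bound on $|\VI(W|Q)|$, since the triple $(G,W)$ is not smaller than $(G,V)$ in the parameter $|G|+|V|$; this is dealt with by arranging the minimal counterexample appropriately (for instance, taking $|G|$ minimal and then $|V|$ maximal), and is exactly the bookkeeping carried out in \cite{L2010}.

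Finally — and this is the main obstacle — one must show $\varphi_N$ has a unique irreducible constituent, i.e. that a constituent $\theta\in\I(N)$ of $\varphi_N$ is $G$-invariant. Suppose not, so that $T=G_\theta<G$. Since $N\NM G$, the Clifford correspondence for $\pi$-partial characters (Corollary 5.7 of \cite{I2018}) together with its analogue for $\pi$-induction (Lemma \ref{4}) lets one pass from $G$ to $T$ and from $V$ to $V\cap T$; by Lemma \ref{vtx} every vertex of the Clifford correspondent $\hat\varphi\in\I(T|\theta)$ of $\varphi$ lies in the $G$-class of $Q$ (a vertex of $\hat\varphi$ is automatically a vertex of $\hat\varphi^{G}=\varphi$), so after replacing $\theta$ by a suitable $G$-conjugate we may assume $Q\le T$ and $\hat\varphi\in\I(T|Q)$. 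Partitioning $\VI(V|Q)$ according to which $V$-orbit of $G$-conjugates of $\theta$ occurs on restriction to $N$, and transporting each block through the appropriate conjugate of the Clifford correspondence, the count for $(G,V)$ reduces to counts for $(T,V\cap T)$, where minimality applies since $|T|+|V\cap T|<|G|+|V|$. The delicate step is keeping the $N_G(Q)$-bookkeeping consistent through these correspondences; carrying it out produces the final contradiction and completes the proof.
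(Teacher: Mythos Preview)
The paper does not actually prove this lemma: it quotes Theorem~2.1 of \cite{L2010} and states that the same argument goes through verbatim in the $\pi$-separable (rather than solvable) setting. Your sketch is therefore already more than the paper offers, and its architecture --- rule out $V=G$ and $V\NM G$ by direct computation, show $\eta\mapsto\eta^W$ lands in $\VI(W|Q)$ to force maximality, and then run Clifford theory over $N=\Core_G(V)$ --- is exactly Lewis's strategy. The arguments for $V\ne G$, for $V$ nonnormal, and for irreducibility and vertex of $\eta^W$ are all correct as written.

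The one place where your sketch does not quite close is the maximality step, and you have correctly identified it. The fibre bound for each $\mu\in\VI(W|Q)$ comes from minimality applied to the triple $(W,V,\mu)$, which is legitimate since $|W|+|V|<|G|+|V|$. But the bound $|\VI(W|Q)|\le|N_G(Q):N_W(Q)|$ cannot be extracted from $|G|+|V|$-minimality: in fact your own inequalities show that if $V<W<G$ then $(G,W)$ is \emph{also} a counterexample, and since $|G|+|W|>|G|+|V|$ this is no contradiction. Your proposed remedy (minimise $|G|$ first, then maximise $|V|$) does repair the argument, but it is a different ordering from the one in the stated hypothesis. Most likely this is simply how Lewis phrases the induction in \cite{L2010}, and the present paper's ``$|G|+|V|$ minimal'' is a slight paraphrase; if you want a self-contained proof matching the lemma as stated here, you should either check Lewis's actual ordering or adjust the statement to the lexicographic one your argument uses.
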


A key ingredient in our proof of Theorem B is the following corollary of
Lemma \ref{L2010-2.1}.

\begin{cor}\label{L2010-2.2}
Let $G$ be $\pi$-separable with $2\notin\pi$, and let $Q$ be a $\pi'$-subgroup of $G$.
Suppose that $\varphi\in\I(G|Q)$ and $Q\leq V\leq G$.
Then $|\VI(V|Q)|\leq |N_G(Q):N_V(Q)|$.
\end{cor}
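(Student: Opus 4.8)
The plan is to deduce Corollary \ref{L2010-2.2} from Lemma \ref{L2010-2.1} by a minimal-counterexample argument. Suppose the inequality fails for some pair $(G,V)$, and choose such a pair with $|G|+|V|$ minimal. By Lemma \ref{L2010-2.1}, $V$ is a nonnormal maximal subgroup of $G$, and if $N=\Core_G(V)$ then $\varphi_N$ has a unique irreducible constituent, say $\vartheta\in\I(N)$. Since $2\notin\pi$, I can now exploit $\pi$-induction: the first step is to identify $N$ as a normal subgroup over which everything is ``pinned down''. Because $\vartheta$ is $G$-invariant (being the unique constituent of $\varphi_N$) and has a vertex, one passes to a Hall $\pi'$-subgroup structure: replacing $Q$ by a conjugate, I would arrange that $Q\cap N$ is a vertex for $\vartheta$ and that $Q\cap N\NM Q$ in the relevant local configuration, using Lemma \ref{vtx} applied inside $N$.

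The second step is the counting. Let $\bar V=VN/N=V/N$ and $\bar G=G/N$, and let $\bar Q=QN/N$. The idea is to set up a bijection (or at least an injection compatible with the $N_G(Q)$-action) between $\VI(V|Q)$ and the analogous set $\VI(\bar V|\bar Q)$ for the quotient $\bar G$ acting on some $\pi$-partial character $\bar\varphi\in\I(\bar G)$ lying ``over'' $\vartheta$ in the Clifford-theoretic sense; here is where $\pi$-induction (Lemmas \ref{3} and \ref{4}) replaces ordinary induction, since in characteristic-two-analogue situations one must carry the $\pi$-standard sign character $\delta_{(G,H)}$ along to keep $\pi$-special factors $\pi$-special (Lemma \ref{spec-ind}). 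Concretely, each $\eta\in\VI(V|Q)$ restricts to $N$ with constituents lying in a single $V$-orbit over $\vartheta$, and the correspondence $\eta\mapsto$ (its image in $\I(V/N)$ after untwisting by the sign character) should be a bijection onto $\VI(\bar V|\bar Q)$ that intertwines the conjugation actions of $N_G(Q)$ and $N_{\bar G}(\bar Q)$. Since $|\bar G|+|\bar V|<|G|+|V|$ (as $N>1$, because $\varphi\in\I(G|Q)$ forces $\vartheta$ to have a nontrivial vertex when $Q\not\le N$ — and if $Q\le N$ one handles it directly, as then $V\ge Q$ with $V$ maximal forces a contradiction with $V$ nonnormal), minimality gives $|\VI(\bar V|\bar Q)|\le |N_{\bar G}(\bar Q):N_{\bar V}(\bar Q)|$. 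Combining with $N_G(Q)/N_G(Q)\cap N \cong N_{\bar G}(\bar Q)$-type identities and the fact that $N_V(Q)$ maps onto $N_{\bar V}(\bar Q)$, one recovers $|\VI(V|Q)|\le |N_G(Q):N_V(Q)|$, contradicting the choice of $(G,V)$.

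The main obstacle I expect is the reduction to the quotient: making the $\pi$-induction bookkeeping precise so that the bijection $\VI(V|Q)\leftrightarrow\VI(\bar V|\bar Q)$ genuinely preserves vertices (not just induces $\varphi$) and is equivariant for the normalizer actions. In particular, one must check that a vertex $Q$ for $\eta\in\I(V)$ maps to a vertex $\bar Q$ for the corresponding character of $V/N$, which requires knowing that $Q\cap N$ is precisely a vertex for $\vartheta$ and invoking the transitivity of vertices under the Clifford correspondence for $\pi$-partial characters (this is exactly the kind of statement underlying Lemma 5.21 of \cite{I2018} used elsewhere in the paper). The sign-character twisting from Lemma \ref{1} is routine once the framework is in place, but getting the normal subgroup $N$ to be nontrivial and simultaneously well-behaved with respect to both the vertex $Q$ and the induction is the delicate point; the case $Q\le N$ must be disposed of separately and early, since there the quotient argument degenerates.
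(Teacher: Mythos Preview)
Your opening move is correct and matches the paper: take a minimal counterexample and invoke Lemma~\ref{L2010-2.1} to get $V$ nonnormal maximal with core $N$ and $\varphi_N=e\vartheta$ for a $G$-invariant $\vartheta\in\I(N)$. After that, however, your plan diverges from the paper and runs into real obstacles.

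The paper does \emph{not} pass to the quotient $G/N$. Instead it chooses a chief factor $K/N$ with $K\nsubseteq V$ (so $KV=G$) and splits into two cases. If $K/N$ is a $\pi'$-group, a restriction bijection $\I(G|\theta)\to\I(V|\theta_D)$ (with $D=K\cap V$) forces $\varphi_V$ to be irreducible, so $\VI(V|Q)=\varnothing$, a contradiction. If $K/N$ is a $\pi$-group, then since $2\notin\pi$ the Feit--Thompson theorem makes $K/N$ an abelian $p$-group of odd order, whence $K\cap V=N$; lifting any $\xi\in\VI(V|Q)$ to $\hat\xi\in\B(V)$ and observing $(\hat\xi^G)^0=\varphi$ gives $\hat\xi^G$ irreducible, which is ruled out by Theorem~7.21 of \cite{I2018}. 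So the argument terminates directly in the minimal configuration, without any inductive descent to a quotient.

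Your quotient strategy has two concrete gaps. First, there is no mechanism in sight to produce a $\bar\varphi\in\I(G/N)$ from the $G$-invariant $\vartheta$: nothing guarantees that $\vartheta$ extends to $G$ (or even to $V$), so ``untwisting by the sign character'' does not yield a well-defined map $\VI(V|Q)\to\VI(\bar V|\bar Q)$, let alone a vertex-preserving one. Second, your claim that $N>1$ is unsupported; nothing in Lemma~\ref{L2010-2.1} prevents $\Core_G(V)=1$, and in that case your reduction is vacuous. The paper's chief-factor argument sidesteps both issues entirely, and the use of $\B$-characters together with Theorem~7.21 of \cite{I2018} in the $\pi$-case is the key idea you are missing.
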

\begin{proof}
Assume that the result is false, and let $G$ be a counterexample such that
$|G|+|V|$ is as small as possible. Let $N$ be the core of $V$ in $G$.
Applying Lemma \ref{L2010-2.1}, we see that $V$ is a maximal subgroup of $G$
with $N<V$, and that $\varphi_N=e\alpha$ where $e$ is a positive integer and $\alpha\in\I(N)$.
In particular, $\alpha$ is invariant in $G$.

Let $K/N$ be a chief factor of $G$.
Then $K\nsubseteq V$ by the definition of $N$,
and the maximality of $V$ forces $KV=G$. Write $D=K\cap V$.
Since $G$ is $\pi$-separable, we know that $K/N$ is either a $\pi$-group or a $\pi'$-group.

Suppose that $K/N$ is a $\pi'$-group.
Let $\theta\in\I(K)$ lie under $\varphi$ and over $\alpha$.
Since $\alpha$ is $G$-invariant, we have $\theta_N=\alpha$ by Clifford's theorem for $\I$-characters (Corollary 5.7 of \cite{I2018}), and thus $\theta_D$ is irreducible.
Then restriction defines a bijection from $\I(G|\theta)$ onto $\I(V|\theta_D)$ (see Lemma 5.20 of \cite{I2018}), which implies that $\varphi_V$ is irreducible,
and consequently, $\varphi$ cannot be induced from $V$.
This shows that the set $\VI(V|Q)$ is empty, a contradiction.
So we are done in this case.

Now suppose that $K/N$ is a $\pi$-group.
Then $K/N$ has odd order because $2\notin\pi$,
and hence $K/N$ is solvable by the Feit-Thompson odd-order theorem.
We conclude that $K/N$ is an abelian $p$-group for some odd prime $p$,
which implies that $D=K\cap V$ is normal in $G$, so $D=N$.

We will use $\B$-characters to derive a contradiction
(see \cite{I2018} for the definition and properties).
Let $\hat{\alpha}\in \B(N)$ be such that $(\hat{\alpha})^0=\alpha$.
Since $\alpha$ and $\hat\alpha$ determine each other,
it follows that $\hat{\alpha}$ is also $G$-invariant.
Fix $\xi\in \VI(V|Q)$. Then $\xi^G=\varphi$,
which implies that $\xi$ lies under $\varphi$ (see Lemma 5.8 of \cite{I2018}), and thus $\xi$ lies over $\alpha$.
Let $\hat\xi\in\B(V)$ be a lift for $\xi$.
Since the irreducible constituents of $\hat\xi_N$ are still $\B$-characters,
we see that $\hat\alpha$ is an irreducible constituent of ${\hat\xi}_N$.
Furthermore, observe that $(\hat\xi^G)^0=(\hat\xi^0)^G=\xi^G=\varphi$,
so $\hat\xi^G$ is irreducible, which is not the case by Theorem 7.21 of \cite{I2018}.
This contradiction completes the proof.
\end{proof}

The following result, which we think is of independent interest,
is the key to the proof of Theorem B in the introduction.

\begin{theorem}\label{B'}
Let $G$ be a $\pi$-separable group with $2\notin\pi$,
and suppose that $\varphi\in\I(G)$ has vertex $Q$, where $Q$ is a $\pi'$-subgroup of $G$.
If $\delta\in\Lin(Q)$, then $|\LPS(Q,\delta)|\leq|N_G(Q):N_G(Q,\delta)|$.
\end{theorem}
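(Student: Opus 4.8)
The plan is to induct on $|G|$ and reduce to a situation controlled by Corollary \ref{L2010-2.2}. First I would handle the trivial degenerate cases: if $Q=G$ (so $G$ is a $\pi'$-group), then $\varphi$ is linear, its unique lift is $\varphi$ itself, $N_G(Q)=G$, and the inequality is clear; more generally if $G$ has no proper subgroup $U\ge$ (a conjugate of) $Q$ carrying a $\pi$-factored character inducing $\chi$ with the right $\pi'$-part, then $\LPS(Q,\delta)$ is small by direct inspection. The substantive case is when there is a nontrivial normal subgroup to cut down by. So I would pick a chief factor $K/N$ of $G$ (with $N$ chosen appropriately — see below) and split on whether $K/N$ is a $\pi$-group or a $\pi'$-group, mimicking the Cossey--Lewis argument of \cite{CL2010} but with $\pi$-induction and $*$-vertices in place of ordinary induction and Cossey vertex pairs.

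The engine is Lemma \ref{CL3.1} together with Lemma \ref{CL3.3}. Fix a good lift $\chi\in\LPS(Q,\delta)$ and let $N\NM G$ be maximal such that the irreducible constituents of $\chi_N$ are $\pi$-factored; by Lemma \ref{CL3.3} this $N$ depends only on the pair $(Q,\delta)$, not on the choice of $\chi\in\LPS(Q,\delta)$, which is exactly what makes a uniform reduction possible. By Lemma \ref{CL3.1}, $Q\cap N$ is a Hall $\pi'$-subgroup of $N$, there is a unique $\pi'$-special (hence linear) $\beta\in\Irr(N)$ with $\beta_{Q\cap N}=\delta_{Q\cap N}$, and every $\chi\in\LPS(Q,\delta)$ lies over some $\alpha\beta$ with $\alpha\in\X(N)$ and $\alpha^0$ an irreducible constituent of $\varphi_N$; moreover all such $\alpha$ are $G$-conjugate (via Clifford's theorem for $\I$-characters, Corollary 5.7 of \cite{I2018}, and the injectivity of $\chi\mapsto\chi^0$ on $\X(N)$, Lemma \ref{lift}). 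Fixing one such $\alpha$, I would pass to the stabilizer $T=G_{\alpha\beta}=G_\alpha$ (using invariance of $\beta$) and use the $\pi$-Clifford correspondence, Lemma \ref{4}, to set up a degree-preserving bijection $\Irr(T\mid\alpha\beta)\to\Irr(G\mid\alpha\beta)$, $\psi\mapsto\psi^{\pi G}$, which by Lemma \ref{3} is compatible with the $*$-vertex construction: a $*$-vertex of $\psi$ is a $*$-vertex of $\psi^{\pi G}$. One checks that this restricts to a bijection $\LPS^T(Q,\delta)\to\LPS(Q,\delta)$ where $\LPS^T(Q,\delta)$ is the analogous set inside $T$ for the $\I$-character $\varphi_T:=(\alpha\beta)^{0}$-Clifford-correspondent of $\varphi$ (using that $\alpha^0\beta^0=\alpha^0$ lies under $\varphi$ and Clifford theory for $\pi$-partial characters). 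Since $N_G(Q,\delta)\le T$ up to conjugacy (as $Q\cap N$ and $\delta_{Q\cap N}$ determine $\alpha\beta$ up to the stabilizer), and $N_T(Q,\delta)=N_G(Q,\delta)$, the inequality for $T$ yields the one for $G$; so if $T<G$ we finish by induction.

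It remains to treat $T=G$, i.e.\ $\alpha\beta$ is $G$-invariant, while by maximality of $N$ some chief factor $K/N$ makes the constituents of $\chi_K$ non-$\pi$-factored. Here I would use Lemma \ref{CL3.2}: since $\alpha\beta$ is $G$-invariant and $K/N$ is a $\pi$- or $\pi'$-group, the failure of $\pi$-factoredness over $K$ forces a definite structural conclusion, and — exactly as in Corollary \ref{L2010-2.2}'s proof — one shows that each $\chi\in\LPS(Q,\delta)$ corresponds to a member $\eta_\chi\in\VI(V\mid Q)$ for a suitable subgroup $V$ (obtained from the normal nucleus of $\chi$, which by Theorem \ref{unique} can be taken inside a fixed $G$-class so that its Hall $\pi'$-subgroup is $Q$ and its $\pi'$-part restricts to $\delta$ on $Q$), and that $\chi\mapsto\eta_\chi$ is injective: $\eta_\chi=(\gamma_\pi\beta)^0$-type data recovers $\chi$ via $\chi=(\delta_{(G,W)}\gamma)^{\pi G}$ and Lemma \ref{4}. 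Then $|\LPS(Q,\delta)|\le|\VI(V\mid Q)|\le|N_G(Q):N_V(Q)|\le|N_G(Q):N_G(Q,\delta)|$, the middle inequality being Corollary \ref{L2010-2.2} and the last holding because $N_G(Q,\delta)\le N_V(Q)$ once $V$ is normalized to contain $Q$ with $\delta=(\cdots)_Q$ coming from a character of $V$. The main obstacle I anticipate is precisely bookkeeping the $\pi$-standard sign characters through all of these reductions — verifying, via Lemma \ref{1} and Lemma \ref{spec-ind}, that the $*$-vertex $(Q,\delta)$ is preserved (not merely up to a sign twist) under passage to the stabilizer $T$ and under $\pi$-induction, and that the correspondence with $\VI(V\mid Q)$ is genuinely injective rather than finite-to-one; getting the sign characters to cancel correctly is the delicate point, but Theorem \ref{unique} (all $*$-vertices conjugate) is exactly the tool that lets one align choices and push it through.
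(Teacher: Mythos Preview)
Your outline has the right ingredients but collapses two distinct reductions into one, and this creates genuine gaps.

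First, the claim ``$T=G_{\alpha\beta}=G_\alpha$ (using invariance of $\beta$)'' is false: $\beta$ is the unique $\pi'$-special character of $N$ with $\beta_{Q\cap N}=\delta_{Q\cap N}$, so it is fixed by $N_G(Q,\delta)$, but there is no reason it should be $G$-invariant. Consequently $N_G(Q,\delta)\le T$ is not available, and your comparison of indices between $T$ and $G$ does not go through. Worse, the $\pi$-Clifford correspondent $\psi\in\Irr(T\mid\alpha\beta)$ of $\chi$ need not have $*$-vertex exactly $(Q,\delta)$: all you know from Theorem~\ref{unique} is that a $*$-vertex of $\psi$ is $G$-conjugate to $(Q,\delta)$, hence of the form $(Q,\delta^g)$ for some $g\in N_G(Q)$, and $g$ need not lie in $T$. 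So the map is not a bijection onto $\LPS^T(Q,\delta)$ but only an injection into a union over $N_T(Q)$-orbit representatives $\delta_1,\dots,\delta_k$ of the $N_G(Q)$-orbit of $\delta$. Finally, your treatment of the case $T=G$ is inconsistent: if $\alpha\beta$ is $G$-invariant then Lemma~\ref{N-nuc} forces $N=G$, so every $\chi\in\LPS(Q,\delta)$ is already $\pi$-factored and the set is a singleton; there is no chief factor $K/N$ left to analyse and no role for $\VI(V\mid Q)$.

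The paper's proof separates the two reductions. Step~1 passes to $T=G_\mu$ (with $\mu\in\I(N)$ under $\varphi$), accepts that the $*$-vertex may move within the $N_G(Q)$-orbit of $\delta$, and handles this by summing the inductive bounds $|N_T(Q):N_T(Q,\delta_i)|$ over the $N_T(Q)$-orbits, which add up to $|N_G(Q):N_G(Q,\delta)|$. After Step~1 one has $\alpha$ (not $\beta$) $G$-invariant, so now $V:=G_\theta=G_\beta$ genuinely contains $N_G(Q,\delta)$. Step~2 then shows, using $\beta^y=\beta\Rightarrow y\in V$, that the $*$-vertex $(Q,\delta)$ \emph{is} preserved under the second Clifford reduction to $V$, yielding an injection of $\LPS(Q,\delta)$ into $\bigcup_{i}{\rm L}^*_{\zeta_i}(Q,\delta)$ with $\{\zeta_i\}=\VI(V\mid Q)$; induction gives $|\LPS(Q,\delta)|\le |\VI(V\mid Q)|\cdot|N_V(Q):N_V(Q,\delta)|$, and Corollary~\ref{L2010-2.2} finishes. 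Your inequality $|\LPS(Q,\delta)|\le|\VI(V\mid Q)|$ (claiming $\chi\mapsto\eta_\chi$ injective into $\VI(V\mid Q)$) skips this second inductive factor and is not justified: distinct good lifts can have the same underlying $\I$-character in $V$.
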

\begin{proof}
We proceed by induction on $|G|$, and assume without loss that $\LPS(Q,\delta)$ is not an empty set.
By Lemma \ref{N-nuc} and Lemma \ref{CL3.3},
we can fix $N\NM G$ such that for each $\chi\in \LPS(Q,\delta)$,
$N$ is the unique maximal normal subgroup of $G$ with the property that
every irreducible constituent of $\chi_N$ is $\pi$-factored.
We will complete the proof by carrying out the following steps.

\HJ{Step 1}
We may suppose that $\varphi_N=e\mu$, where $e$ is an integer and $\mu\in\I(N)$.
\EHJ
\begin{proof}
By Lemma 6.33 of \cite{I2018}, we can choose an irreducible constituent $\mu$ of $\varphi_N$ such that the Clifford correspondent $\varphi_\mu$ of $\varphi$ with respect to $\mu$ also has vertex $Q$, that is, $\varphi_\mu\in\I(G_\mu|Q)$.
Set $T=G_\mu$.
Consider the conjugation action of $N_G(Q)$ on $\Lin(Q)$,
and let $\{\delta_1,\ldots,\delta_k\}$ be a set of representatives
of $N_T(Q)$-orbits on the $N_G(Q)$-orbit containing $\delta$.
We will construct an injection from $\LPS(Q,\delta)$ into
$\bigcup_{i=1}^k  {\rm L}^*_{\varphi_\mu}(Q,\delta_i)$.

To do this, we fix an arbitrary character $\chi\in \LPS(Q,\delta)$.
By Lemma \ref{CL3.1}(4), there exists an irreducible constituent $\theta$ of $\chi_N$ such that $\theta$ is $\pi$-factored with $\pi$-degree and $\theta^0=\mu$.
Applying the Clifford correspondence for $\pi$-induction (see Lemma \ref{4}),
we obtain a unique character $\psi\in\Irr(G_\theta|\theta)$ satisfying $\psi^{\pi G}=\chi$.
Clearly, the equality $\theta^0=\mu$ implies that $G_\theta\le G_\mu=T$,
and so we can write $\eta=\psi^{\pi T}$.
Then $\eta^{\pi G}=(\psi^{\pi T})^{\pi G}=\psi^{\pi G}=\chi$, and thus
$$(\eta^0)^G=((\delta_{(G,T)}\eta)^0)^G
=((\delta_{(G,T)}\eta)^G)^0=(\eta^{\pi G})^0=\chi^0=\varphi,$$
which forces $\eta^0\in\I(T)$.
On the other hand, since $N\le \Ker(\delta_{(T, G_\theta)})$ by Lemma \ref{1},
we know that $\delta_{(T, G_\theta)}\psi$ lies over $\theta$,
and hence $\eta=(\delta_{(T, G_\theta)}\psi)^T$ lies over $\theta$.
So $\eta^0$ lies over $\theta^0=\mu$, and by the Clifford correspondence for $\I$-character,
we conclude that $\eta^0=\varphi_\mu$.

Furthermore, since $\chi$ is a good lift for $\varphi$,
it is easy to see that $\eta$ is also a good lift for $\varphi_\mu$.
We may therefore assume that $\eta\in {\rm L}^*_{\varphi_\mu}(Q,\delta')$
for some linear character $\delta'$ of $Q$.
Since $\eta^{\pi G}=\chi$,
it follows that $(Q,\delta')$ is also a $*$-vertex for $\chi$.
Now Theorem \ref{unique} implies that $(Q,\delta')=(Q,\delta)^g$ for some element $g\in G$,
so $g\in N_G(Q)$. From this we conclude that $\eta$ has a $*$-vertex that is $N_T(Q)$-conjugate to one of the $(Q,\delta_i)$ for some $i\in\{1,\ldots, k\}$. Of course this pair $(Q,\delta_i)$ is itself a $*$-vertex for $\eta$, and we obtain
$\eta\in\bigcup_{i=1}^k  {\rm L}^*_{\varphi_\mu}(Q,\delta_i)$.

We claim that $\eta$ is determined uniquely by $\chi$.
To see this, it suffices to show that $\eta$ is independent of the choice of $\theta$.
Assume that $\theta'\in\Irr(N)$ is a constituent of $\chi_N$ with $(\theta')^0=\mu$.
Then $\theta'=\theta^x$ for some $x\in G$, and hence
$\mu=(\theta')^0=(\theta^x)^0=(\theta^0)^x=\mu^x$.
It follows that $x\in G_\mu=T$.
Let $\psi'\in\Irr(G_{\theta'}|\theta')$ be such that $(\psi')^{\pi G}=\chi$,
and let $\eta'=(\psi')^{\pi T}$.
Then $\psi'=\psi^x$, and thus $\eta'=\eta^x=\eta$, as desired.

Now we have established a well-defined map
$\chi\mapsto\eta$ from $\LPS(Q,\delta)$ into
$\bigcup_{i=1}^k  {\rm L}^*_{\varphi_\mu}(Q,\delta_i)$.
Since $\eta^{\pi G}=\chi$, this map is injective, and hence we have
$$|\LPS(Q,\delta)|\leq\sum_{i=1}^k|{\rm L}^*_{\varphi_\mu}(Q,\delta_i)|.$$
If $T<G$, then the inductive hypothesis yields
$$|{\rm L}^*_{\varphi_\mu}(Q,\delta_i)| \le |N_T(Q):N_T(Q,\delta_i)|.$$
Observe that $|N_T(Q):N_T(Q,\delta_i)|$ is the orbit size of the $N_T(Q)$-orbit containing $\delta_i$ under the action of $N_T(Q)$ on the $N_G(Q)$-orbit containing $\delta$, and thus
$$|\LPS(Q,\delta)| \le \sum_{i=1}^k|N_T(Q):N_T(Q,\delta_i)|
=|N_G(Q):N_G(Q,\delta)|.$$
The result follows in this case, and we may therefore assume that $G_\mu=G$.
That is, $\mu$ is invariant in $G$, so that $\varphi_N=e\mu$ for some integer $e$.
\end{proof}

\HJ{Step 2}
There exists a character $\theta\in\Irr(N)$ such that
$\theta$ is $N_G(Q,\delta)$-invariant and lies under every character in $\LPS(Q,\delta)$.
Furthermore, writing $V=G_\theta$, we have
$$|\LPS(Q,\delta)|\leq |\VI(V|Q)||N_V(Q):N_V(Q,\delta)|,$$
where $\VI(V|Q)$ is the set of those members of $\I(V|Q)$ that induce $\varphi$,
as in Lemma \ref{L2010-2.1}.
\EHJ
\begin{proof}
By Lemma \ref{CL3.1}, we set $\theta=\alpha\beta$,
where $\alpha\in\Irr(N)$ is the unique $\pi$-special character such that $\alpha^0=\mu$
and $\beta\in\Irr(N)$ is the unique $\pi'$-special character with
$\beta_{Q\cap N}=\delta_{Q\cap N}$.
Then $\theta$ is irreducible and $\theta^0=\alpha^0=\mu$.
Furthermore, since $\mu$ is invariant in $G$ by Step 1, it follows that $\alpha$ is $G$-invariant,
and in particular, we have $G_\beta=G_\theta=V$.
Observe that $N_G(Q,\delta)$ fixes $\delta_{Q\cap N}$, so it leaves $\beta$ unchanged.
This shows that $\theta$ is $N_G(Q,\delta)$-invariant, i.e. $N_G(Q,\delta)\le G_\theta$.
By Lemma \ref{CL3.1} again, it is easy to see that
every character in $\LPS(Q,\delta)$ lies over $\theta$,
that is, $\LPS(Q,\delta)\subseteq\Irr(G|\theta)$.

Assume first that $V=G$. Then $\theta$ is $G$-invariant,
and by Lemma \ref{N-nuc}, we have $N=G$.
It follows that $\LPS(Q,\delta)=\{\theta\}$,
and the result trivially holds in this case.

We can therefore assume that $V<G$.
For any character $\chi\in\LPS(Q,\delta)$,
there exists a unique $\psi\in\Irr(V|\theta)$ such that $\psi^{\pi G}=\chi$
by the Clifford correspondence for $\pi$-induction (Lemma \ref{4}),
so that $\chi\mapsto\psi$ defines an injection from
$\LPS(Q,\delta)$ into $\Irr(V|\theta)$.
We will show that $\psi^0\in\VI(V|Q)$.

First, we note that
$$\varphi=\chi^0=(\psi^{\pi G})^0=((\delta_{(G,V)}\psi)^G)^0
=((\delta_{(G,V)}\psi)^0)^G=(\psi^0)^G,$$
so $\psi^0$ is irreducible and induces $\varphi$.

Next, we claim that $(Q,\delta)$ is a $*$-vertex for $\psi$.
To see this, let $(Q^*,\delta^*)$ be a $*$-vertex for $\psi$.
Since $\theta$ is the unique irreducible constituent of $\psi_N$,
we have $\beta_{Q^*\cap N}=\delta^*_{Q^*\cap N}$ by Lemma \ref{CL3.1}.
Also, since $(Q^*,\delta^*)$ is clearly a $*$-vertex for $\chi$,
there exists some element $y\in G$ such that $(Q^*,\delta^*)=(Q,\delta)^y$ by Theorem \ref{unique}.  Now we have
$$\beta_{Q^y\cap N}^y=(\beta_{Q\cap N})^y=(\delta_{Q\cap N})^y=\delta^y_{Q^y\cap N}=\delta_{Q^*\cap N}^*=\beta_{Q^*\cap N}=\beta_{Q^y\cap N}.$$
Since $Q^y\cap N=(Q\cap N)^y$ is a $\Hall$ $\pi'$-subgroup of $N$ by Lemma \ref{CL3.1} again, it follows from Lemma \ref{res} that $\beta^y=\beta$,
and thus $y\in G_\beta=V$. This shows that $(Q,\delta)$ is also a $*$-vertex for $\psi$,
as claimed.

Finally, by Lemma \ref{vertex}, we see that $\psi^0\in \I(V)$ has vertex $Q$.
Since we have established that $\psi^0$ induces $\varphi$,
it follows that $\psi^0\in\VI(V|Q)$, as wanted.

Now let $\VI(V|Q)=\{\zeta_1,\ldots,\zeta_m\}$.
Then $\psi^0=\zeta_i$ for some $i\in\{1,\ldots,m\}$,
so that $\psi\in {\rm L}^*_{\zeta_i}(Q,\delta)$.
It follows that the map $\chi\mapsto\psi$ defines an injection
from $\LPS(Q,\delta)$ into $\bigcup_{i=1}^m {\rm L}^*_{\zeta_i}(Q,\delta)$,
and we have
$$|\LPS(Q,\delta)|\leq \sum_{i=1}^m |{\rm L}^*_{\zeta_i}(Q,\delta)|.$$
Since $V<G$, the inductive hypothesis guarantees that
$|{\rm L}^*_{\zeta_i}(Q,\delta)|\leq |N_V(Q):N_V(Q,\delta)|$,
and hence $|\LPS(Q,\delta)|\leq m|N_V(Q):N_V(Q,\delta)|= |\VI(V|Q)||N_V(Q):N_V(Q,\delta)|$.
\end{proof}

We can now complete the proof of the theorem.
By Step 2, we know that $N_G(Q,\delta)\le V$, so $N_V(Q,\delta)=N_G(Q,\delta)$.
Applying Corollary \ref{L2010-2.2}, we have $|\VI(V|Q)|\leq |N_G(Q):N_V(Q)|$.
This, together with the inequality in Step 2, yields
$$|\LPS(Q,\delta)|\leq |N_G(Q):N_V(Q)||N_V(Q):N_G(Q,\delta)|
=|N_G(Q):N_G(Q,\delta)|,$$
and the proof is now complete.
\end{proof}

As before, we prove a more general result, which covers Theorem B
by taking $\pi=\{2\}'$.

\begin{theorem}\label{B}
Let $G$ be a $\pi$-separable group with $2\notin\pi$,
and suppose that $\varphi\in\I(G)$ has vertex $Q$, where $Q$ is a $\pi'$-subgroup of $G$.
If $\delta\in\Lin(Q)$, then
$$|\LN(Q,\delta)|\leq|N_G(Q):N_G(Q,\delta)|,$$
and thus $|\LN|\le |Q:Q'|$.
\end{theorem}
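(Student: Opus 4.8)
The plan is to deduce Theorem~\ref{B} directly from Theorem~\ref{B'}, since the substantive work has already been done; what remains is a translation between twisted vertices and $*$-vertices together with an orbit-counting argument.

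First I would fix $\varphi\in\I(G|Q)$ and recall from Theorem~\ref{A}(a) that every good lift $\chi\in\LN$ has all of its twisted vertices linear and $G$-conjugate. Hence $\chi$ determines a well-defined $N_G(Q)$-orbit of linear characters of $Q$, and if $\{\delta_1,\dots,\delta_n\}$ is a set of representatives of the $N_G(Q)$-orbits on $\Lin(Q)$, then $\LN=\bigsqcup_{i=1}^{n}\LN(Q,\delta_i)$ is a genuine disjoint union. It therefore suffices to bound each piece $|\LN(Q,\delta)|$ for a single $\delta\in\Lin(Q)$.

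Next I would invoke Theorem~\ref{unique}: for a good lift $\chi$, the map $(Q,\delta)\mapsto(Q,\delta_{(G,Q)}\delta)$ is a bijection from the twisted vertices of $\chi$ onto the $*$-vertices of $\chi$, so $\chi\in\LN(Q,\delta)$ exactly when $\chi\in\LPS(Q,\delta_{(G,Q)}\delta)$; that is, $\LN(Q,\delta)=\LPS(Q,\delta_{(G,Q)}\delta)$. Applying Theorem~\ref{B'} with $\delta_{(G,Q)}\delta$ in place of $\delta$ gives $|\LN(Q,\delta)|\le|N_G(Q):N_G(Q,\delta_{(G,Q)}\delta)|$. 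To finish this step I need that this index equals $|N_G(Q):N_G(Q,\delta)|$, i.e. that $N_G(Q)$ stabilizes $\delta$ and $\delta_{(G,Q)}\delta$ simultaneously; this holds because the $\pi$-standard sign character is canonical, so $(\delta_{(G,Q)})^g=\delta_{(G,Q^g)}=\delta_{(G,Q)}$ for every $g\in N_G(Q)$, whence $\delta_{(G,Q)}$ is $N_G(Q)$-invariant and $N_G(Q,\delta_{(G,Q)}\delta)=N_G(Q,\delta)$.

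For the ``in particular'' statement I would sum over the orbit representatives: since $|N_G(Q):N_G(Q,\delta_i)|$ is exactly the size of the $N_G(Q)$-orbit of $\delta_i$, the disjoint-union decomposition above yields $|\LN|=\sum_{i=1}^{n}|\LN(Q,\delta_i)|\le\sum_{i=1}^{n}|N_G(Q):N_G(Q,\delta_i)|=|\Lin(Q)|=|Q:Q'|$. The only delicate points here are bookkeeping ones rather than new mathematics: ensuring the decomposition of $\LN$ over twisted vertices is well-defined (Theorem~\ref{A}(a)) and checking that the sign-character twist does not disturb the inertia index. The genuinely substantive estimate $|\LPS(Q,\delta)|\le|N_G(Q):N_G(Q,\delta)|$ --- whose proof rests on the Clifford theory of Step~1, the normal-constituent analysis of Step~2, and the counting bound of Corollary~\ref{L2010-2.2} --- is already in hand from Theorem~\ref{B'}, so I do not expect any further obstacle.
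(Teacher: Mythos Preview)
Your proposal is correct and follows essentially the same approach as the paper: both use Theorem~\ref{unique} to rewrite $\LN(Q,\delta)$ as $\LPS(Q,\delta_{(G,Q)}\delta)$, apply Theorem~\ref{B'}, and then invoke the $N_G(Q)$-invariance of $\delta_{(G,Q)}$ to identify the two stabilizer indices. Your write-up is slightly more explicit about the canonical nature of the sign character and the orbit-counting for the global bound, while the paper simply refers back to the remarks in the introduction, but the argument is the same.
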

\begin{proof}
Fix $\delta\in\Lin(Q)$. By Lemma \ref{unique}, we have $\LN(Q,\delta)=\LPS(Q,\delta_{(G,Q)}\delta)$, and
since $N_G(Q)$ fixes $\delta_{(G,Q)}$, it follows by Theorem \ref{B'} that
$$|\LPS(Q,\delta_{(G,Q)}\delta)|\le |N_G(Q):N_G(Q,\delta_{(G,Q)}\delta)|
=|N_G(Q):N_G(Q,\delta)|.$$
This proves the first assertion,
which implies that $|\LN|\le |Q:Q'|$ (by the remarks preceding the statement of Theorem B in the introduction).
\end{proof}

Finally, we prove Corollary C from the introduction.
Recall that a finite group $G$ is said to be an {\bf $M$-group}
if every $\chi\in\Irr(G)$ is induced from a linear character of some subgroup of $G$.
By Theorem 6.23 of \cite{I1976},
if $G$ is a solvable group with a normal subgroup $N$,
such that all Sylow subgroups of $N$ are abelian and $G/N$ is supersolvable,
then each subgroup of $G$ is an $M$-group.
We will prove the following somewhat more general result,
which clearly implies Corollary C.

\begin{cor}
Let $\varphi\in\I(G)$ with vertex $Q$, where $G$ is $\pi$-separable with $2\notin\pi$.
Suppose that there exists a normal subgroup $N$ of $G$, such that
$N$ has odd order and each subgroup of $G/N$ is an $M$-group. Then $\LN=\LP$, and thus $|\LP|\le |Q:Q'|$.
\end{cor}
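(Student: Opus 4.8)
The plan is to show that under the stated hypotheses every lift $\chi\in\LP$ has a linear Navarro vertex, which by definition gives $\LN=\LP$, and then Theorem~\ref{B} yields $|\LP|\le|Q:Q'|$. By Theorem~\ref{A}(b), it suffices to prove that every quasi-primitive character $\psi\in\Irr(V)$ (with $V\le G$) inducing $\chi$ has odd degree; equivalently, by Theorem~\ref{A}(a), that every vertex pair for $\chi$ is linear. So fix $\psi\in\Irr(V)$ inducing $\chi$ with $\psi$ quasi-primitive, and note that since $G$ is solvable in Corollary~C (or, in the more general statement, we will have to argue directly that the relevant sections are $M$-groups) we may even take $\psi$ primitive. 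We must show $\psi(1)$ is odd.

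First I would set $L=V\cap N$ and observe that $L\NM V$ has odd order, hence is a $\pi$-group-free obstruction only in a controlled way: every irreducible constituent $\theta$ of $\psi_L$ lies in a group of odd order and is therefore $\pi$-special (odd order forces $\pi$-separability with $L=O_\pi(L)$, so $\I(L)$ consists of lifts of $\pi$-special characters). More to the point, $\theta(1)$ divides $|L|$ which is odd. The key structural input is that $V/L\cong VN/N$ is a subgroup of $G/N$, hence an $M$-group by hypothesis. The strategy is then: since $\psi$ is primitive and $L\NM V$ with $V/L$ an $M$-group, I want to conclude that $\psi$ is induced from a character of $V$ that is ``linear modulo $L$'', i.e.\ $\psi=\rho^V$ where $\rho\in\Irr(W)$ with $L\le W\le V$ and $\rho_L$ irreducible of odd degree and $W/L$ acting so that $\rho(1)=\rho_L(1)$. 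Primitivity of $\psi$ forces $W=V$, so $\psi_L$ is irreducible, whence $\psi(1)=\psi_L(1)$ divides $|L|$ and is odd.

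To make the middle step precise I would argue as follows. Let $\theta\in\Irr(L)$ be a constituent of $\psi_L$ and pass to the Clifford correspondent $\psi_\theta\in\Irr(V_\theta|\theta)$; since $\psi$ is primitive, $V_\theta=V$ and $\theta$ is $V$-invariant. Now $\theta$ extends its determinantal data: because $L$ has odd order and $\theta$ is $V$-invariant, I can use character-theoretic extension results (e.g.\ the fact that $\theta$ has an extension to $V$ after a suitable central extension, combined with the $M$-group property of $V/L$) to write $\psi=\hat\theta\cdot\mu$ for an extension $\hat\theta$ of $\theta$ and $\mu\in\Irr(V/L)$ — or more carefully, to reduce to the situation where $\psi$ corresponds under the projective-representation/character-triple machinery to a character of $V/L$. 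Since $V/L$ is an $M$-group, that character is induced from a linear character of some subgroup $W/L$; pulling back, $\psi$ is induced from a character of $W$ of degree $\theta(1)$. Primitivity of $\psi$ gives $W=V$, so $\psi(1)=\theta(1)$ is odd, as wanted.

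The main obstacle I anticipate is the extension/character-triple bookkeeping in the previous paragraph: making rigorous the passage from ``$\theta$ is $V$-invariant of odd degree with $V/L$ an $M$-group'' to ``$\psi(1)=\theta(1)$ whenever $\psi$ is primitive'' requires care because $\theta$ need not extend to $V$ on the nose — one must work with a character triple $(V,L,\theta)$ isomorphic to $(V^*,L^*,\theta^*)$ with $L^*$ central, use that $V^*/L^*\cong V/L$ is an $M$-group (being an $M$-group is inherited by this isomorphism type), and track that linear-modulo-center characters of $V^*$ pull back to characters of $V$ of degree exactly $\theta(1)$. A secondary point is the reduction in the fully general Corollary (as opposed to Corollary~C): there one only assumes each subgroup of $G/N$ is an $M$-group, and one must check this is exactly what the argument uses — indeed $V/L$ is a subgroup of $G/N$, so the hypothesis applies verbatim, and the Feit--Thompson theorem guarantees $N$ (of odd order) is solvable so that $G$ is $\pi$-separable and all the $\pi$-theory above is available. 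Once these points are in place, $\LN=\LP$ follows, and $|\LP|=|\LN|\le|Q:Q'|$ is immediate from Theorem~\ref{B}.
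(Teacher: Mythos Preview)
Your character-triple reduction has a genuine gap that you identify but do not close. After passing to $(V^*,L^*,\theta^*)$ with $L^*$ central, the character $\psi^*\in\Irr(V^*)$ corresponding to $\psi$ lies over the \emph{faithful} linear character $\theta^*$ of $L^*$, so $\psi^*$ is not a character of $V^*/L^*$. The $M$-group hypothesis applies to $V/L\cong V^*/L^*$ and its subgroups, but says nothing about irreducible characters of $V^*$ that are nontrivial on $L^*$; to conclude that $\psi^*$ is linear you would need $V^*$ itself to be an $M$-group, and central extensions of $M$-groups need not be $M$-groups (the standard example $\mathrm{SL}_2(3)$ over $A_4$ has $|L^*|=2$, but there is no evident mechanism by which the odd-order hypothesis on $L$ forces the relevant extensions to be $M$-groups in general). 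Your sentence ``linear-modulo-center characters of $V^*$ pull back to characters of $V$ of degree exactly $\theta(1)$'' is correct but presupposes precisely what must be shown, namely that $\psi^*$ is linear modulo center.

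The paper avoids this entirely by using condition~(c) of Lemma~\ref{lin-ver} rather than Theorem~\ref{A}(b). That condition furnishes a $\pi$-\emph{factored} $\psi=\alpha\beta\in\Irr(U)$ inducing $\chi$, where $\beta=\psi_{\pi'}$ is primitive and $U\supseteq O_\pi(G)O_{\pi'}(G)$. Since $N$ has odd order one has $N\le O_\pi(G)\le U$ (this uses that every odd prime dividing $|N|$ lies in $\pi$, as in the case $\pi=\{2\}'$ of Corollary~C); and because $\beta$ is $\pi'$-special, $N\le O_\pi(U)\le\Ker\beta$. Thus $\beta$ is a primitive irreducible character of the $M$-group $U/N$, hence linear, so $\psi(1)=\alpha(1)$ is a $\pi$-number and therefore odd. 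The decisive idea you are missing is to work with the $\pi'$-special factor rather than the whole primitive character: the $\pi'$-special part automatically has the $\pi$-group $N$ in its kernel, so the $M$-group hypothesis applies to it on the nose, with no character-triple or projective machinery required.
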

\begin{proof}
Fix a character $\chi\in \LP$, and note that $N\le O_\pi(G)$ as $2\notin\pi$.
To prove that $\chi\in\LN$,
by Lemma \ref{lin-ver} it suffices to show that
each $\pi$-factored character $\psi\in\Irr(U)$ inducing $\chi$
has odd degree, where $N\le U\le G$ and $\psi_{\pi'}$ is primitive.
Let $\beta=\psi_{\pi'}$. Then, by definition, $N\le \Ker\beta$,
and we can view $\beta$ as a character of $U/N$.
By assumption, however, $U/N$ is an $M$-group,
so the primitive character $\beta$ must be linear.
It follows that $\psi$ has $\pi$-degree, and hence $\psi(1)$ is an odd number.
Finally, we have $|\LP|\le |Q:Q'|$ by Theorem \ref{B}.
\end{proof}

\section*{Acknowledgements}
This work was supported by the NSF of China (12171289) and the NSF of Shanxi Province
(20210302123429 and 20210302124077).


\end{document}